\newcommand{\R}{\mathbb{R}}
\newcommand{\N}{\mathbb{N}}
\newcommand{\fd}{f^{\delta}}
\newtheorem{theorem}{Theorem}[section]
\newtheorem{lemma}[theorem]{Lemma}
\newtheorem{proposition}[theorem]{Proposition}
\newtheorem{remark}[theorem]{Remark}
\begin{document}
	
	\title{Multiscale hierarchical decomposition methods for images corrupted by multiplicative noise}
	
	\author{Joel Barnett\footnotemark[1]\,\,\, Wen Li\footnotemark[2]\,\,\, Elena Resmerita\footnotemark[3]\,\,\, and\,\,\, Luminita Vese\footnotemark[4]}
	
	\maketitle
	\renewcommand{\thefootnote}{\fnsymbol{footnote}}
	\footnotetext[1]{Department of Mathematics, University of California
		at Los Angeles (UCLA), Los Angeles, CA 90095 (jrbarnett@math.ucla.edu)}
	\footnotetext[2]{Department of Mathematics,
		Fordham University, Bronx, New York 10458 (wli198@fordham.edu)}
	\footnotetext[3]{Institute of Mathematics, 
		Alpen-Adria Universit\"at Klagenfurt, 
		Universit\"atsstrasse 65--67,
		9020 Klagenfurt, Austria 
		(elena.resmerita@aau.at)}
	\footnotetext[4]{Department of Mathematics, University of California
		at Los Angeles (UCLA), Los Angeles, CA 90095 (lvese@math.ucla.edu)}
	
	\renewcommand{\thefootnote}{\arabic{footnote}}
	\vspace*{-12pt}
	\begin{abstract}
		Recovering images corrupted by multiplicative noise is a well known challenging task. Motivated by the success of multiscale hierarchical decomposition methods (MHDM) in image processing, we adapt a variety of both classical and new multiplicative noise removing models to the MHDM form. On the basis of previous work, we further present a tight and a refined version of the corresponding multiplicative MHDM. We discuss  existence and uniqueness of solutions for the proposed models, and additionally, provide convergence properties. Moreover, we present a discrepancy principle stopping criterion which prevents recovering  excess noise in the multiscale reconstruction. Through comprehensive numerical experiments and comparisons, we qualitatively and quantitatively evaluate the validity of all proposed models for denoising and deblurring images degraded by multiplicative noise. By construction, these multiplicative multiscale hierarchical decomposition methods have the added benefit of recovering many scales of an image, which can provide features of interest beyond image denoising. 
		
		\textbf{AMS Subject classification:  26B30, 46N10, 68U10}
	\end{abstract}

	\vspace{0.5cm}
	\noindent{\small\textbf{Key words:} Image restoration, multiplicative noise, multiscale expansion, ill-posed problem.}

	\section{Introduction}
	While the literature on denoising and deblurring images affected by additive noise is quite rich, the study of  images corrupted by multiplicative noise still requires attention. This type of noise is inherent  in radar, synthetic-aperture radar (SAR) and ultrasound images, cf. \cite{goodman1976, burckhardt1978}. Our aim is to contribute to the topic not only by removing such  noise from images, but also by proposing multiscale decomposition strategies for those images, similar to the contributions of \cite{tad_nez_ves1,tad_nez_ves2,mod_nac_ron,li_res_ves} in case of additive corruption. More precisely,  this work expands multiscale methods to the multiplicative-noise domain. While we apply such techniques to foundational methods  from \cite{rud_osh94, auber_aujol_2008,  osher_shi_multiplic}, the resulting procedures could translate to other approaches aimed at multiplicative  corruption. The importance of providing decompositions of  medical images that separate the coarse and fine  scales  has been clearly highlighted in the case of  image registration when significant levels of noise are involved \cite{multi_registration} (see also \cite{mod_nac_ron, deb_guy_ves}). The reader is referred also to \cite{bijaoui1997multiscale, rue1996multiscale} that address  astronomical imaging and the need of recovering objects of very different sizes.
	
	In a multiplicative noise problem, a clean image $z:\Omega\subseteq\mathbbm{R}^2 \to \mathbbm{R}$ is degraded by multiplicative noise  $\eta:\Omega \to \mathbbm{R}$ of mean 1 and (possibly) blurred by an ill-posed, linear, bounded operator $T:L^2(\Omega)\to L^2(\Omega)$, where $\Omega$ is a domain in $\R^2$. These together form the degraded data 
	\begin{equation}\label{model}
	f^\delta = (Tz)\cdot \eta,
	\end{equation}
	where $\delta$ is a parameter relating to the size of the noise, referred to  hereafter as  the noise level. Throughout this work, we denote by $f\in L^2(\Omega)$ the \textit{exact data} satisfying $Tz=f$.
	
	A natural way to approach multiplicative noise is to manipulate the problem into a familiar form and apply existing techniques. At least in the pure denoising case, the logarithm transforms the problem $f^\delta=z\cdot \eta$ to an additive noise system $\log(f^\delta) = \log(z) +\log(\eta)$ for which a plethora of denoising methods exist. Indeed, this idea has been tried---the reader can find in  \cite{auber_aujol_2008}  a discussion of a log-additive model. However, as pointed out in \cite{auber_aujol_2008}, blindly applying the $\log$ transform and employing algorithms for additive noise removal do not necessarily provide reasonable reconstructions, because the reconstruction means are often much smaller than those of the original images. This is due to the primary assumption of additive-noise methods, namely, zero-mean noise. To be more precise, by Jensen's inequality one has $\log(\mathbb{E}[\eta])\geq \mathbb{E}[\log(\eta)]$. If a restoration $u$ of the system $\log(\fd) = \log(u) + \log(\eta)$ is found under the assumption that $\mathbb{E}[\log(\eta)]=0$, then $\log(\mathbb{E}[\eta])\geq 0.$ Consequently, one has $\mathbbm{E}[\eta]\geq 1$ with strict inequality as soon as there is any noise, which is a contradiction to $\eta$ having mean 1. One can estimate this change in expectation by expanding $\log(\eta)$ about $\mathbb{E}[\eta]$,
	$$ \mathbb{E}[\log(\eta)] \approx \log(\mathbb{E}[\eta]) - \frac{\mathbb{V}[\eta]}{2\mathbb{E}[\eta]^2},$$
	whenever the distribution of $\eta$ allows such quantities to be defined. Such restorations $u$ will satisfy $\fd \approx u\cdot \eta$, implying $\mathbbm{E}[u]\mathbbm{E}[\eta] \approx \mathbb{E}[\fd]$, and necessarily $\mathbbm{E}[u] \lessapprox \mathbbm{E}[\fd] = \mathbbm{E}[z]$, indicating a shift in the reconstruction mean $\mathbbm{E}[u]$ from the image mean $\mathbbm{E}[z]$. Therefore, designing novel algorithms which address directly the multiplicative noise  is highly desirable.
	
	Let us review below several variational models for restoring images corrupted by multiplicative noise. Rudin and Osher \cite{rud_osh94} introduced in 1994 the following model for image denoising by imposing constraints on the mean and the variance of the noise,
	\begin{equation}\label{eq:RLO}
	\min_u \left\{ TV(u)+\lambda\int_\Omega \bigg(\frac{f^\delta}{u} -1\bigg)^2\right\},
	\end{equation}
	where $TV$ is the total variation and the minimization is performed in the space of bounded variation functions $BV(\Omega)$. Note that problem \eqref{eq:RLO} is well-defined when $f^\delta\in L^\infty(\Omega)$ and $\inf_\Omega f^\delta>0$ (see \cite{chambolle_multiplicative, cha_lio95}), and 
	the unique minimizer $u$ verifies $\inf_\Omega f^\delta\leq u\leq \sup_\Omega f^\delta$ a.e.

	In 2008, Aubert and Aujol \cite{auber_aujol_2008} proposed minimizing the energy 
	\begin{equation}\label{eq:AA}
	E(u) = TV(u) + \lambda\int_\Omega \left( \log(u) + \frac{f^\delta}{u}\right)
	\end{equation} 
	over the set $\{u\in BV(\Omega): u>0\}$  for denoising images degraded by a gamma-law speckle noise, with $f^\delta>0$ as well. We will call this the AA model. The authors demonstrated that minimizers of \eqref{eq:AA} exist, however, the data fidelity term $\int_\Omega \left( \log(u) + \frac{f^\delta}{u}\right)$ is only strictly convex for $u\in(0, 2f^\delta)$ a.e., and not globally convex, so the minimization problem may not have a unique solution. They also noted that \eqref{eq:AA} can be extended to deblurring by involving an appropriate operator $T$,
	\begin{equation*}
	E(u) = TV(u) + \lambda\int_\Omega \left( \log(Tu) + \frac{f^\delta}{Tu}\right).
	\end{equation*}
	
	Concurrently, Shi and Osher \cite{osher_shi_multiplic} introduced two multiplicative noise removal models. The first one, which looks for
	\begin{equation}\label{eq:osher_shi}
	\arg\min_u\left\{ TV(u) + \lambda \int_\Omega \left( a \frac{f^\delta}{u} + \frac{b}{2} \bigg(\frac{f^\delta}{u}\bigg)^2 + c \log(u) \right) \right\},
	\end{equation}
	is a more general AA formulation which can be reduced to \eqref{eq:AA} by setting $b=0$ and $a=c$. Again, the fidelity term is not globally convex. To address this, Shi and Osher let $w=\log(u)$ within the fidelity term of \eqref{eq:osher_shi} and replaced $TV(u)$ with $TV(w)$, thus producing the second model which is convex (in $w$),
	\begin{equation}\label{eq:osher_shi_convex}
	\arg\min_w \left\{TV(w) + \lambda \int_\Omega \left( a f^\delta e^{-w} + \frac{b}{2} (f^\delta)^2 e^{-2w}+c w \right) \right\}.
	\end{equation}
	Having solved the now convex minimization problem for $w$, the true image estimate can be recovered by $u=e^w$. It is worth emphasizing that this partial transformation, which replaces $TV(u)=TV(e^w)$ with $TV(w)$, shifts the regularization to the logarithm of the image intensity. Consequently, the majority of smoothing is performed on image intensities near 0, while larger intensities are much less smoothed.
	
	There have been several extensions of the works 
	\cite{auber_aujol_2008, osher_shi_multiplic} which enforce convexity of the objective functional or  tackle the efficient computation of the minimizers. For instance, the authors in \cite{huang_2009} studied \eqref{eq:osher_shi_convex} with $a=c=1$ and $b=0$, splitting the regularizing and fidelity terms, and adding a quadratic fitting term.
	A primary reason for the formulation in \cite{huang_2009} is the numerical efficiency in solving the minimization with an iterative alternating scheme. 
	
	Rather than transform $w=\log(u)$ to gain convexity, Dong and Zeng \cite{dong_2013} introduced an additional quadratic penalty term to the AA model
	\begin{equation}\label{eq:dong_2013}
	E_T(u) := \lambda TV(u) + \int_{\Omega} \left( \log(Tu) + \frac{f^\delta}{Tu} \right) +\alpha \int_\Omega \left( \sqrt{\frac{Tu}{f^\delta}} -1\right)^2,
	\end{equation}
	thus ensuring convexity of the fidelity term for $\alpha\geq \frac{2\sqrt{6}}{9}$, as well as coercivity of the objective functional for the more general problem of deblurring. Hereafter, we refer to \eqref{eq:dong_2013} as the DZ model. We mention also the interesting approach for multiplicative noise removal in \cite{steidl_teuber}, that uses a data fidelity which is typical for eliminating Poisson noise, and incorporates total variation or nonlocal means as regularizers. Additionally, in recent years there have been new approaches for removing multiplicative noise from images with or without blur. In \cite{ullah2017new} and \cite{TGV_multiplicative}, the authors made use of a fractional-order total variation and a total generalized variation
	penalty, respectively. The paper \cite{wu2020convex} considered a convex scheme for structured multiplicative noise, \cite{liu2016modified} proposed an improved algorithm for the DZ model \cite{dong_2013}, and \cite{zhang2022image} adapted Euler's elastica to the multiplicative noise problem. The reader is referred further to the introduction and the included references on the multiplicative noise topic in \cite{dar_men_res22}. There are also methods addressing  denoising of color images degraded by speckle noise, which employ a total variation function adapted to red-green-blue (RGB)  and hue-saturation-value (HSV) images (see \cite{ullah2017new} and \cite{ wang2021color}, respectively). Studies on using convolutional neural networks for  speckle noise removal  can be found in \cite{ learned_models, nao2022speckle}.
	
	As mentioned above, our aim goes beyond the need of reconstructing images corrupted by multiplicative noise. That is, we focus also on obtaining decompositions of such images along several scales in a variational manner. To this aim, we start  by recalling the approach  by Tadmor, Nezzar and Vese (TNV) \cite{tad_nez_ves1, tad_nez_ves2}, that introduced a hierarchical decomposition based on the Rudin and Osher's (RO) model \eqref{eq:RLO}. Let us emphasize the role of such a decomposition in image restoration. For simplicity, we consider here the case of additive noise removal (see again \cite{tad_nez_ves1, tad_nez_ves2}),  starting from the Rudin, Osher, Fatemi (ROF) model 
	\begin{equation}\label{eq:rof}
	\min_{u}\{\lambda_0\|Tu-f^\delta\|^2+TV(u)\}.
	\end{equation}
	It is not easy to determine an appropriate parameter $\lambda_0$ to ensure that the cartoon (the main features of the image) is well extracted and also the image texture is well preserved while removing the noise. The advantage of the hierarchical process is that it enables separation of noise and image texture in increasingly refined scales by updating parameters, since the texture can be seen as cartoon at finer scales. As a result, the method provides an approximation of the original image $f$ by a sum of image components, that is $f\approx \sum u_j$. As explained in \cite{tad_nez_ves1} (see also the more recent work \cite{multiscale_theory}), the approximation $\sum u_j$ obtained at the $k$-th hierarchical step involving the regularization parameter $\lambda_k$ does not necessarily coincide with the one-step ROF minimization corresponding to the parameter $\lambda_k$. This shows the versatile role of the hierarchical decompositions versus single-step variational models. 
	Motivated by stronger theoretical properties and better restoration effects, tight and refined versions of the multiscale hierarchical decomposition for denoising and deblurring images with additive noise were proposed in \cite{li_res_ves} (see also \cite{mod_nac_ron} for a more general tight version). Moreover, \cite{li_res_ves} proposed for the first time the discrepancy principle for early stopping in the original, tight and refined MHDM.

	In this study, we introduce, test, and provide convergence properties for several hierarchical decomposition procedures designed to recover structured and textured images with multiple scales, when affected by multiplicative noise. Specifically, we propose four multiscale hierarchical decomposition methods for multiplicative noise removal, called SO MHDM, AA MHDM, AA-log MHDM and TNV-log MHDM. Thus, we first formulate a direct MHDM extension of the Shi-Osher model \eqref{eq:osher_shi_convex}, which we abbreviate as SO MHDM, allowing us to adapt the summed-MHDM denoising techniques from \cite{li_res_ves} to the new data fidelity setting, which is no longer quadratic (see Remark \ref{rem:summed}). Secondly, we proceed similarly with 
	the AA model (AA MHDM, for short), and additionally introduce a penalty-modified adaptation of the AA model \eqref{eq:AA} (abbreviated as AA-log MHDM) which handles multiplicative gamma noise and blurring. Finally, we introduce a new variational model, that is a modified RO model, in which the  TV penalty is replaced by  TV(log). Then we derive its multiscale adaptation, thus yielding the so-called TNV-log MHDM. In order to promote  more details in the  reconstruction of the images perturbed by multiplicative noise, we introduce also tight and refined MHDM versions, and emphasize their effect on images with more texture.
	
	We expect that the proposed multiplicative multiscale hierarchical decomposition methods can be extended to applications beyond image restoration, such as image fusion \cite{cui2015detail, bavirisetti2018multi}, image representation \cite{tadmor2009multiscale}, image registration and inverse problems \cite{xu2014multiscale, mod_nac_ron}. 
	
	The current work is organized as follows. In Section \ref{sec:prelim}, we lay out the general strategy of hierarchical decomposition for multiplicative noise degraded images. We justify well-definedness, convergence properties and stopping rules of such schemes in Sections \ref{sec:definedness} and \ref{conv_MHDM}. Tight and refined modifications of the recovery schemes are analyzed in Section \ref{sec:extensions}. We propose several numerical discretizations of the methods in Section \ref{sec:num_schemes}, present  detailed numerical results  in Section \ref{sec:numerical_results}, and point out the robustness of the proposed procedures, as well as the advantages of using one method or another, depending on the structure of the given image.
	\section{Preliminaries}\label{sec:prelim}
	In the multiplicative denoising problem, recovering the true image $z$ in $BV(\Omega)$ amounts to solving the equation
	\begin{equation*}
	\fd = (Tz)\cdot \eta
	\end{equation*}
	in a stable way, where $z$ is assumed to contain features at different scales, as happens for example, in natural images.
	Our aim is to derive multiscale hierarchical decomposition methods for images affected by multiplicative noise, inspired by the idea developed in \cite{tad_nez_ves1, tad_nez_ves2}.
	
	We first briefly recall the setup from \cite{tad_nez_ves1} for images perturbed by additive noise. Let $\lambda_0$ be a positive number and $u_0$ be a solution of the ROF problem \eqref{eq:rof}.
	Define the  sequence $(u_k)\subset BV(\Omega)$ such that
	\begin{equation}\label{iter1}
	u_k\in\arg\min_{u}\{\lambda_k\|Tu-v_{k-1}\|^2+TV(u)\},
	\end{equation}
	with $\lambda_k=2^k\lambda_0$ and $v_{k-1}=f^\delta-\sum_{j=0}^{k-1}Tu_j$, and thus $f^\delta=Tu_0+Tu_1+\dots+Tu_{k-1}+v_{k-1}$. Equivalently, procedure \eqref{iter1} can be expressed as
	\begin{equation*}
	\min_{u}\{\lambda_k\|T(u+x_{k-1})-f^\delta \|^2+TV(u)\},
	\end{equation*}
	for $k\geq 0$, where $\displaystyle{x_{k-1}=\sum_{j=0}^{k-1}u_j}$ and $x_{-1}=0$ (see also \cite{mod_nac_ron}).
	Convergence rates  of $(Tx_k)$ to $f$ have been analyzed in \cite{mod_nac_ron,li_res_ves,tad_nez_ves1,tad_nez_ves2}, while  improved versions have been introduced and studied in \cite{mod_nac_ron,li_res_ves,tang_he}.
	
	For images degraded by multiplicative noise,  the only multiscale hierarchical decomposition we know about is the one from \cite{tad_nez_ves1} and \cite{tad_nez_ves2}, which uses an increasing weighting parameter $\lambda_k$ in the iteration-adapted  Rudin-Osher model \eqref{eq:RLO}.  Namely,
	one starts with
	\begin{equation*}
	u_0 \in\arg\min_u \left\{ \lambda_0 \int_\Omega \bigg(\frac{f^\delta}{u}-1\bigg)^2  + TV(u)\right\},
	\end{equation*}
	where  $\lambda_0$ is a positive parameter, and proceeds further with a similar minimization problem by doubling $\lambda_0$ \sout{the parameter} and considering the new residual $\fd/u_0$ which might contain more features of the original image, and so on. The minimizers $u_k $ obtained iteratively as
	\begin{equation*}
	u_k \in\arg\min_u \left\{ \lambda_k \int_\Omega \bigg(\frac{f^\delta}{uu_0\cdots u_{k-1}} -1\bigg)^2  + TV(u)\right\}
	\end{equation*}
	for $k\geq 0$ (with  $u_{-1}=1$) are well-defined \cite{cha_lio95} and can be characterized as shown in  \cite{tad_nez_ves2}. 
	
	We will work with a general data fidelity term in order to provide analysis in a unifying setting. 
	Assume that $J:L^2(\Omega)\to [0,\infty]$ is a proper function and $H$ is a non-negative data fitting term to be specified later.
	Let $u_k$ be defined as follows: 
	\begin{equation}\label{multiplic}
	u_k\in\arg\min_{u} E_k(u),\,\mbox{with}\quad E_k(u)= \lambda_kH(\fd,T(ux_{k-1}))+J(u),
	\end{equation}
	where  
	$\displaystyle{x_{k-1}=\prod_{j=0}^{k-1}u_j}$, $x_{-1}=1$ and $ {\lambda_{k+1}}={2\lambda_{k}}$, if $k\geq 1$. For example, choosing the data fidelity
	\begin{equation}\label{quadratic_data}
	H(\fd, Tu)=\left\|\frac{\fd}{Tu} -1\right\|^2
	\end{equation}
	yields the Rudin-Osher variational method \eqref{eq:RLO} for deblurring images, while
	\begin{equation}\label{IS_data}
	H(\fd, Tu)=\int_{\Omega}\left(\frac{\fd}{Tu} +\log(Tu)-\log(\fd)-1\right)
	\end{equation}
	is the Itakura-Saito divergence that leads to the Aubert-Aujol model. Note that this divergence is the Bregman distance associated with the  $-\log (u)$ Burg entropy, thus being nonnegative due to the convexity of the entropy. We will mention later more properties of $H$ that will be helpful in the analysis regarding convergence of $(Tx_k)$ to the exact data $f$. 
	More properties of the  multiplicative MHDM schemes introduced in this work, e.g. error estimates, will be shown when using penalty functionals $J$ satisfying
	\begin{equation}\label{properties_multiplic}
	J(uv)\leq J(u)+ J(v),\quad\quad J(u)=J\left(\frac{1}{u}\right),\quad\quad J(1)=0,
	\end{equation}
	for any $u,v\in dom\,J=\{u\in L^2(\Omega): J(u)<\infty\}$. An example of such a function is $J(u)=\varphi(\log(u))$, where $\varphi$ is a seminorm (e.g., the total variation or the $*$-norm).
	
	For the moment, we assume that minimizers $u_k$ in \eqref{multiplic} exist, and instead focus on the analysis of the multiscale decomposition method. Note that existence results will be pointed out for the particular denoising models we deal with in Section \ref{sec:definedness}, while the deblurring models (that is $T\neq I$) will be considered in more detail in our future research.
	\begin{remark}\label{rem1} Let us discuss the choice of the data-fidelity term in \eqref{multiplic}. The first iterate $u_0$ is just a minimizer of
		$\lambda_0H(\fd,Tu)+J(u).$ When searching for $u_1$, we can consider two possibilities. The first one consists of looking for $u_1$ such that the misfit between $T(u_1u_0)$ and  $f^\delta$ becomes smaller than the one between $Tu_0$ and  $f^\delta$, and corresponds to the choice $\lambda_1H(\fd,T(uu_0))+J(u)$ used in \eqref{multiplic}. Thus, the clean data $f$ will be approximated by  $T(u_0u_1\dots u_k)$. The second possibility addresses the ``new" data $f^\delta/Tu_0$ and amounts to finding $u_1$ as a minimizer of $\lambda_1H(\fd/Tu_0,Tu)+J(u).$ In this case, it is desired that the product $Tu_0Tu_1\dots Tu_k$ converges in some sense to  $f$. Our work focuses on the first version, since it is hoped that the product $u_0u_1\dots u_k$ might approximate the true image $z$ in both the denoising and deblurring case.
	\end{remark}
	
	\begin{remark} The Itakura-Saito divergence occuring in the AA-model has the interesting property of being scale invariant in the following sense: $H(\lambda u,\lambda v)=H(u,v)$ for any $\lambda>0$.  Therefore, in the denoising case, it holds that $H(\fd,uu_0)=H(\fd/u_0,u)$, showing that the two approaches from Remark \ref{rem1} coincide. The same holds for $H$ used in the Rudin-Osher model. 
	\end{remark}
	\begin{remark}\label{rem:summed} For clarity, we will at times refer to hierarchical decompositions which break an image down into a sum  $\sum_j u_j$ as summed-MHDM (like the those studied in \cite{tad_nez_ves1,li_res_ves}). We introduce this vocabulary to distinguish from the decomposition techniques which use a multiplicative hierarchical representation $\prod_j u_j$ of an image.
	\end{remark} 
	
	\section{Well-definedness of several models for multiplicative noise removal}\label{sec:definedness}
	Recall we are focusing on multiscale hierarchical decompositions  applied to variational denoising models that address multiplicative noise. Before listing those models, we verify the following equivalence that will ensure well-definedness for some schemes of type \eqref{multiplic} involving particular penalties $J=TV(\log)$.
	
	\begin{proposition} \label{log-exp} The following minimization problems in $BV(\Omega)$
		\begin{align}\label{eq:min1}
		u^* \in \arg\min_{u} \left\{ E(u):= \lambda H(\fd,u) + TV(\log(u)) \right\}
		\end{align}
		and
		\begin{align}\label{eq:exp_min1}
		w^* \in \arg\min_{w} \left\{ \tilde E(w):= \lambda H(\fd,e^w) + TV(w) \right\}
		\end{align}
		are equivalent (that is, they have the same minimum values). {Moreover, the following holds: If $u^*$ is a minimizer of \eqref{eq:min1}, then $\log(u^*)$ minimizes   \eqref{eq:exp_min1}, and if $w^*$ is a minimizer of  \eqref{eq:exp_min1}, then $e^{w^*}$   minimizes \eqref{eq:min1}. }
	\end{proposition}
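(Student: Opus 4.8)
The plan is to exploit the change of variables $w=\log u$, equivalently $u=e^{w}$, which formally turns one objective into the other termwise: $H(\fd,u)=H(\fd,e^{\log u})$ and $TV(\log u)=TV(w)$, so that $E(u)=\tilde E(\log u)$ and $\tilde E(w)=E(e^{w})$. The real content is to make this rigorous at the level of the admissible sets in $BV(\Omega)$ and then to transfer both the value of the infimum and the attainment.

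I would first treat the passage from \eqref{eq:min1} to \eqref{eq:exp_min1}. Let $u\in BV(\Omega)$ with $E(u)<\infty$. The data terms $H(\fd,\cdot)$ in question (the quadratic one \eqref{quadratic_data} and the Itakura--Saito one \eqref{IS_data}) are finite only for $u>0$ a.e., so we may assume $u>0$; moreover, using $\log t\le t-1$ together with the summability of $1/u$ forced by $H(\fd,u)<\infty$, one checks $\log u\in L^{1}(\Omega)$. Together with $TV(\log u)<\infty$, which is part of $E(u)<\infty$, this yields $\log u\in BV(\Omega)$ with $\tilde E(\log u)=E(u)$. Hence $\inf\tilde E\le\inf E$, and if $u^{*}$ solves \eqref{eq:min1} then $\log u^{*}$ is admissible for \eqref{eq:exp_min1} with $\tilde E(\log u^{*})=E(u^{*})=\inf E$, i.e. $\log u^{*}$ solves \eqref{eq:exp_min1}.

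The reverse passage is where I expect the only genuine obstacle: for a generic $w\in BV(\Omega)$ one cannot conclude $e^{w}\in BV(\Omega)$, since $\exp$ is not globally Lipschitz and, in two dimensions, $BV(\Omega)\not\subset L^{\infty}(\Omega)$. I would get around this by truncation. Given $w\in BV(\Omega)$ with $\tilde E(w)<\infty$, set $w_{n}=\max(\min(w,n),-n)\in BV(\Omega)\cap L^{\infty}(\Omega)$ and $u_{n}:=e^{w_{n}}$; since $\exp$ is Lipschitz on $[-n,n]$, $u_{n}\in BV(\Omega)\cap L^{\infty}(\Omega)$ with $\log u_{n}=w_{n}$, so $E(u_{n})=\lambda H(\fd,e^{w_{n}})+TV(w_{n})$. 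Letting $n\to\infty$, one has $w_{n}\to w$ in $L^{1}(\Omega)$ with $TV(w_{n})\le TV(w)$, so $TV(w_{n})\to TV(w)$ by lower semicontinuity, while $H(\fd,e^{w_{n}})\to H(\fd,e^{w})$ by dominated convergence (the integrands being dominated, uniformly in $n$, by an $L^{1}(\Omega)$ function built from the integrability of $e^{-w}$ guaranteed by $H(\fd,e^{w})<\infty$). Thus $E(u_{n})\to\tilde E(w)$, so $\inf E\le\tilde E(w)$; taking the infimum over $w$ and combining with the first step gives $\inf E=\inf\tilde E$.

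It remains to transfer attainment in the last case: if $w^{*}$ solves \eqref{eq:exp_min1}, then $\tilde E(w^{*})=\inf\tilde E=\inf E$, and as soon as $e^{w^{*}}\in BV(\Omega)$ the identity $E(e^{w^{*}})=\lambda H(\fd,e^{w^{*}})+TV(w^{*})=\tilde E(w^{*})$ shows $e^{w^{*}}$ solves \eqref{eq:min1}. The membership $e^{w^{*}}\in BV(\Omega)$ is not automatic at this level of generality, but for the concrete models of Section~\ref{sec:definedness} the minimizers enjoy a priori $L^{\infty}$ bounds (as recalled for the Rudin--Osher model, $\inf_{\Omega}\fd\le u\le\sup_{\Omega}\fd$), which forces $w^{*}$ to be bounded; I would therefore invoke these bounds, or equivalently carry boundedness of the admissible class as a standing assumption in the statement. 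All the technical care is concentrated in the truncation/limit argument.
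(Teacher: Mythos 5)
Your proposal is correct in substance and follows the same underlying idea as the paper (the change of variables $w=\log u$ making the two energies agree termwise, then transferring minimality), but you handle the key technical step by a genuinely different device. The paper's proof is a short two-way contradiction argument that simply asserts $e^{w}\in BV(\Omega)$ for any feasible competitor $w$ (and for the minimizer $w^*$) by citing the chain rule of \cite{vol67}; strictly speaking that chain rule needs the outer function to be Lipschitz on the relevant range, so the paper is implicitly using the boundedness of the functions involved (as it does explicitly in Remark \ref{comments1} for $s\mapsto 1/s$), a point it does not spell out. You instead avoid needing $e^{w}\in BV(\Omega)$ for arbitrary competitors altogether: the truncation $w_n=\max(\min(w,n),-n)$, combined with $TV(w_n)\le TV(w)$, lower semicontinuity of $TV$, and dominated convergence of the fidelity, yields $\inf E\le\tilde E(w)$ for every feasible $w$, so equality of the two infima holds without any chain-rule claim. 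The remaining delicate point — that $e^{w^*}\in BV(\Omega)$ for the actual minimizer $w^*$ — you resolve via the a priori $L^\infty$ bounds ($\inf_\Omega\log f^\delta\le w^*\le\sup_\Omega\log f^\delta$) available for the concrete models of Section \ref{sec:definedness}, or by adding boundedness as a standing assumption; this is a mild restriction relative to the statement as written for general $H$, but it is exactly the setting in which the proposition is used, and it makes explicit the hypothesis that the paper's appeal to \cite{vol67} leaves implicit. In short: the paper's route is shorter, yours is more robust where the exponential's lack of global Lipschitz continuity (and the failure of $BV(\Omega)\subset L^\infty(\Omega)$ in two dimensions) could otherwise cause trouble.
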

	\begin{proof}
		Note that, whenever $E$ and $\tilde E$ are defined, one has $E(u) = \tilde E(\log(u))$ and $\tilde E(w) = E(e^w)$, and furthermore, the minimum values of \eqref{eq:min1} and \eqref{eq:exp_min1} are finite. Indeed, one can easily substitute the constant functions $u=1$ or $w=0$ to get a finite energy. 
		To show that minimizers of $\tilde E$ lead to minimizers of $E$, let $w^*\in BV(\Omega)$ minimize \eqref{eq:exp_min1}. Since $e^{w^*}\in BV(\Omega)$ holds by a chain rule property (see \cite{vol67}), we propose this as a candidate minimizer of \eqref{eq:min1}. Indeed, suppose by
		contradiction that there exists $u\in BV(\Omega)$ so that 
		$$ E(u) < E(e^{w^*})= \tilde E(w^*).$$
		Since $E(e^{w^*})<\infty$, we have $E(u)<\infty$ and so $\log(u)\in BV(\Omega)$. Consequently, $\log(u)$ is feasible for $\tilde E$ and 
		$$\tilde E(\log(u)) = E(u) < \tilde E(w^*),$$
		a contradiction to the minimality of $\tilde E(w^*)$. We conclude $u^* = e^{w^*}$ is feasible and minimizes \eqref{eq:min1}.
		
		For the reverse implication, consider $u^*\in BV(\Omega)$ minimizing \eqref{eq:min1} and suppose there is a $w\in BV(\Omega)$ with 
		$$\tilde E(w) < E(u^*).$$
		But then, $e^w\in BV(\Omega)$ and consequently $E(e^w) = \tilde E(w) < E(u^*)$, a contradiction. Furthermore, $w^* = \log(u^*)$ is in $BV(\Omega)$ by the finiteness of $E(u^*)$, so $w^*$ minimizes $\tilde E$. \smallskip
	\end{proof}
	We focus on the following variational models, among which the TNV-log is based on a new energy functional. 
	This is the first work that considers and analyzes these multiscale hierarchical adaptations. 
	
	\textit{1. A particular Shi-Osher (SO) MHDM model:}
	One can  replace the total variation penalty in \eqref{eq:AA} by  $J(u)=TV(\log (u))$ and substitute $w=\log (u)$, thus obtaining
	the \emph{convex} optimization problem
	\begin{equation}\label{so}
	\min_{w} \left\{ TV(w) + \lambda_0 \int_\Omega \left( f^\delta e^{-w} + w \right) \right\}.
	\end{equation}
	This is \eqref{eq:osher_shi_convex}  for $a=c=1$ and $b=0$.	The paper \cite{jin_yang_2010} showed existence and uniqueness of the minimizer $w_0$ when the data $f^\delta\in L^\infty(\Omega)$ satisfy $\inf_\Omega f^\delta>0$. Moreover, the minimizer $w_0$  verifies $\inf_\Omega(\log (f^\delta))\leq w_0\leq \sup_\Omega(\log (f^\delta))$. 	
	We can now apply summed-MHDM, that is solving
	\begin{equation}\label{SO_MHDM}
	w_k= \arg\min_{w} \left\{\lambda_k\int_\Omega \left( f^\delta e^{-(y_{k-1}+w)}+y_{k-1}+w-\log (f^\delta)-1\right)+ TV(w)\right\},
	\end{equation}
	where 
	$y_{k-1}=\sum_{j=0}^{k-1}w_j$ for $k\in\N$, with $y_{-1}=w_{-1}=0$. Note that our data fidelity also incorporates the term $-\log (f^\delta)-1$ in order to build the Itakura-Saito divergence, which is non-negative.
	
	As in the case of $w_0$, existence and uniqueness can be shown for  $w_1$ (and for further iterations), since the updated data $f^\delta/e^{w_0}$ are also away from zero, and so on.
	
	\textit{2. AA MHDM model:}  It was shown in  \cite{auber_aujol_2008} that minimizers $u_0$ of the AA model \eqref{eq:AA}
	exist in $BV(\Omega)$ for data $f^\delta\in L^\infty(\Omega)$ which satisfy $\inf_\Omega f^\delta>0$. Moreover, any minimizer $u_0$  obeys $\inf_\Omega  f^\delta\leq u_0\leq \sup_\Omega f^\delta$.  In order to obtain existence of  $u_1$ and   of further MHDM iterates, one takes into account that $\fd/u_0$ belongs also to $L^\infty(\Omega)$ and verifies $\inf_\Omega f^\delta/u_0>0$. The generated AA MHDM scheme given by
	\begin{equation}\label{eq:AA_multiscale}
	u_k \in \arg\min_u \left\{\lambda_k \int_\Omega \left(\frac{f^\delta}{ux_{k-1}} + \log(ux_{k-1}) - \log(\fd) -1 \right) + TV(u)\right\}
	\end{equation}
	will briefly be discussed theoretically and numerically in the upcoming sections.
	
	\textit{3. The AA-log MHDM model:} One can employ directly the penalty  $J(u)=TV(\log (u))$ in the AA model,
	\begin{equation}\label{AAlog}
	\min_u\lambda\int_{\Omega}\left(\frac{\fd}{u} +\log(u)-\log(\fd)-1\right)+TV(\log(u)).
	\end{equation}
	
	Clearly, the substitution $w=\log (u)$ yields the SO model. By taking into account the latter and by applying Proposition \ref{log-exp}, problem \eqref{AAlog} has a unique minimizer. The MHDM problem 
	\begin{equation}\label{eq:AA_MHDM}
	u_k \in \arg\min_u \left\{ \lambda_k \int_\Omega \left(\frac{\fd}{ux_{k-1}} + \log(ux_{k-1}) - \log(\fd) -1 \right) + TV(\log(u)) \right\}
	\end{equation}
	is also well-defined in this case. Despite transforming into the convex SO model under the appropriate substitution, we include the AA-log method because it extends to deblurring, and in the presence of blur the log-transformation no longer produces a convex problem.
	
	
	\textit{4. The TNV-log model:} We propose a version of the Rudin-Osher minimization problem, where the penalty  $J(u)=TV(\log (u))$ is used instead of just $TV$. It reads  as
	\begin{equation}\label{eq:RO}
	\min_u \left\{ \lambda_0 \int_\Omega \bigg(\frac{f^\delta}{u}-1\bigg)^2  +TV(\log(u))\right\}. 
	\end{equation}
	Since the TNV method is the RO model based multiscale method, correspondingly, we call RO-log model's multiscale form the TNV-log model, given by
	\begin{equation}\label{eq:ARO_MHDM}
	u_k \in \arg\min_u \left\{ \lambda_k \int_\Omega\left( \frac{f^\delta}{ux_{k-1}} -1 \right)^2 + TV(\log(u)) \right\}.
	\end{equation}
	The existence of minimizers $u_0$ can be shown via Proposition \ref{log-exp} and the following result.
	
	\begin{proposition} Let $f^\delta\in L^\infty(\Omega)$ such that $\inf_\Omega f^\delta>0$. Then, there exists at least one solution $w \in BV(\Omega)$ of the problem
		\begin{equation}\label{eq:RO1}
		\min_{w} \left\{ \lambda_0 \int_\Omega \bigg(f^\delta e^{-w}-1\bigg)^2  +TV(w) \right\},
		\end{equation}
		such that $\inf_\Omega \log f^\delta \leq w\leq \sup_\Omega \log f^\delta$ a.e.
	\end{proposition}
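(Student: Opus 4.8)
The plan is to use the direct method of the calculus of variations, preceded by a truncation step that both confines the minimization to a handy set and delivers the asserted pointwise bounds. Write $m:=\inf_\Omega f^\delta$ and $M:=\sup_\Omega f^\delta$, so $0<m\le M<\infty$ by hypothesis. First observe the energy in \eqref{eq:RO1} is finite at $w=0$ (indeed $\lambda_0\int_\Omega (f^\delta-1)^2<\infty$ since $f^\delta\in L^\infty(\Omega)$ and $\Omega$ is bounded), so the infimum is a finite number, say $\mu$.

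Next I would show it suffices to minimize over the admissible set $K:=\{w\in BV(\Omega): \log m\le w\le \log M \text{ a.e.}\}$. Given $w\in BV(\Omega)$, define the truncation $w^T:=\min\{\max\{w,\log m\},\log M\}$. Since $w^T$ is the composition of $w$ with a $1$-Lipschitz function, $w^T\in BV(\Omega)$ and $TV(w^T)\le TV(w)$. For the fidelity term, note that for fixed $c>0$ the map $t\mapsto g_c(t):=(c e^{-t}-1)^2$ has derivative $g_c'(t)=-2ce^{-t}(ce^{-t}-1)$, hence is strictly decreasing on $(-\infty,\log c)$ and strictly increasing on $(\log c,\infty)$. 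Applying this with $c=f^\delta(x)\in[m,M]$, so that $\log c\in[\log m,\log M]$, and noting that truncating $w(x)$ into $[\log m,\log M]$ never moves the argument away from $\log c$, one gets $g_{f^\delta(x)}(w^T(x))\le g_{f^\delta(x)}(w(x))$ for a.e. $x$; integrating yields $\int_\Omega (f^\delta e^{-w^T}-1)^2\le\int_\Omega (f^\delta e^{-w}-1)^2$. Thus the energy does not increase under truncation and $\mu=\inf_{w\in K}\big\{\lambda_0\int_\Omega (f^\delta e^{-w}-1)^2+TV(w)\big\}$.

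Then I would run the direct method on $K$. Take a minimizing sequence $(w_n)\subset K$. Each $w_n$ satisfies $\|w_n\|_{L^\infty(\Omega)}\le\max\{|\log m|,|\log M|\}$, so $(w_n)$ is bounded in $L^1(\Omega)$; and since the fidelity term is nonnegative, $TV(w_n)$ is bounded by the (eventually bounded) energies. Hence $(w_n)$ is bounded in $BV(\Omega)$, and by the compact embedding $BV(\Omega)\hookrightarrow L^1(\Omega)$ (using that $\Omega$ is bounded with, say, Lipschitz boundary) there is a subsequence, not relabeled, converging to some $w\in L^1(\Omega)$ and, along a further subsequence, a.e.; the a.e. bounds $\log m\le w_n\le\log M$ pass to the limit, so $w\in K$. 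Lower semicontinuity of $TV$ with respect to $L^1$-convergence gives $TV(w)\le\liminf_n TV(w_n)$, while $w_n\to w$ a.e. together with the uniform bound $0\le f^\delta e^{-w_n}\le M/m$ (from $w_n\ge\log m$) lets dominated convergence conclude $\int_\Omega(f^\delta e^{-w_n}-1)^2\to\int_\Omega(f^\delta e^{-w}-1)^2$. Combining, the energy at $w$ is at most $\liminf_n$ of the energies along the minimizing sequence, i.e.\ at most $\mu$, so $w$ is a minimizer; being in $K$ it obeys the claimed bound.

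The only mildly delicate point is the truncation monotonicity of the fidelity term; everything else is the standard compactness and lower-semicontinuity machinery, and the monotonicity itself reduces to the elementary unimodality of $t\mapsto(ce^{-t}-1)^2$ around its minimizer $t=\log c$ noted above. A secondary point worth stating carefully is that the uniform $L^\infty$ bound is available only after restricting to $K$, which is why the truncation reduction is carried out before extracting the convergent subsequence.
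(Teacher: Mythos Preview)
Your proof is correct and follows essentially the same approach as the paper's own argument: the paper merely records that $h(x)=(ae^{-x}-1)^2$ is nonincreasing on $(-\infty,\log a)$ and nondecreasing on $(\log a,\infty)$, and then refers the reader to the proof of \cite[Theorem~4.1]{auber_aujol_2008}, which is exactly the truncation-plus-direct-method argument you carry out in full. Your write-up supplies the details the paper omits (the truncation inequality, BV compactness, lower semicontinuity, and dominated convergence), but the strategy is the same.
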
  
	
	\begin{proof} Let
		$h(x)=(ae^{-x}-1)^2, $ where $a>0$ and $x\in\R$.
		One can prove the result by following the techniques from \cite[Theorem 4.1]{auber_aujol_2008}, taking into account that the function $h$ is nonincreasing on $(-\infty, \log (a))$  and nondecreasing on $(\log (a),\infty).$
	\end{proof}
	As opposed to the situation of the AA model where the $w=\log(u)$ transformation produces a convex problem \eqref{so}, we do not focus on the form \eqref{eq:RO1} since it does not exhibit special properties, and in practice the recoveries are the same or slightly worse than those from the TNV-log model \eqref{eq:RO}. 
	
	\section{Convergence properties of the multiplicative MHDM}\label{conv_MHDM}
	We will consider  a general  data fidelity $H$ and a penalty, $J$, which for the moment does not necessarily satisfy \eqref{properties_multiplic}. Moreover, supposing that the general multiscale hierarchical decomposition schemes \eqref{multiplic} are well-defined (minimizers exist, but might not be unique), we focus on convergence properties of the corresponding iterates. 
	We assume in what follows that the given noisy data $f^\delta$ verify
	\begin{equation}\label{noise}
	H(\fd,f)\leq\delta^2,\quad\delta>0,
	\end{equation}
	where $f$ denotes the exact---that is, non-noisy but potentially blurred---data. Moreover,  existence of a clean image $z$ satisfying  $Tz=f$ and $J(z)<\infty$ is also assumed.
	
	The lemma below shows a couple of basic properties for procedure \eqref{multiplic} (including the SO model, after the logarithm substitution), whenever the iterates are well-defined.
	
	\begin{lemma}\label{lemma1} Assume that $J(1)=0$ and that the iterates $x_k = \prod_{j=0}^k u_j$ given by \eqref{multiplic} are well-defined.
		Then the following inequality holds for any $k\geq 0$,
		\begin{equation*}
		\lambda_kH(\fd, Tx_k)+J(u_k)\leq \lambda_kH(\fd, Tx_{k-1}),
		\end{equation*}
		and the residual $\displaystyle H(\fd, Tx_k)$ decreases for increasing $k$.
		If \eqref{noise} is additionally satisfied and $z/x_{k-1}\in dom\,J$ for any $k\geq 0$, then  \begin{equation}\label{multiplic_monotone2}
		\lambda_kH(\fd, Tx_k)+J(u_k)\leq  \lambda_k\delta^2+J\left(\frac{z}{x_{k-1}}\right)
		\end{equation}
		holds.
	\end{lemma}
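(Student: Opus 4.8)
The plan is to exploit the minimality of $u_k$ in the definition \eqref{multiplic} by testing the functional $E_k$ against well-chosen competitors, and then to telescope the resulting inequality to obtain monotonicity of the residual.

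First I would prove the first inequality. Since $x_k = u_k x_{k-1}$, the functional value at $u_k$ is $E_k(u_k) = \lambda_k H(\fd, Tx_k) + J(u_k)$. Because $u_k$ minimizes $E_k$, we have $E_k(u_k) \le E_k(u)$ for every admissible $u$; I would choose the competitor $u = 1$, which is admissible because $J(1) = 0 < \infty$. For this choice $u x_{k-1} = x_{k-1}$, so $E_k(1) = \lambda_k H(\fd, T x_{k-1}) + J(1) = \lambda_k H(\fd, T x_{k-1})$. Combining, $\lambda_k H(\fd, Tx_k) + J(u_k) \le \lambda_k H(\fd, T x_{k-1})$, which is the claimed inequality. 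Since $J(u_k) \ge 0$ (as $J$ is nonnegative by hypothesis, $J : L^2(\Omega) \to [0,\infty]$), this immediately gives $\lambda_k H(\fd, Tx_k) \le \lambda_k H(\fd, T x_{k-1})$, hence $H(\fd, Tx_k) \le H(\fd, T x_{k-1})$ after dividing by $\lambda_k > 0$; that is, the residual $H(\fd, Tx_k)$ is nonincreasing in $k$. (For $k=0$ one uses the convention $x_{-1} = 1$, so the inequality reads $\lambda_0 H(\fd, T) + J(u_0) \le \lambda_0 H(\fd, T)$ in the denoising case $T = I$; more precisely $x_{-1}=1$ and the statement is $\lambda_0 H(\fd, Tx_0) + J(u_0) \le \lambda_0 H(\fd, T\cdot 1)$.)

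For the second inequality \eqref{multiplic_monotone2}, I would again use minimality of $u_k$, but now test against the competitor $u = z / x_{k-1}$, which is admissible precisely by the standing assumption $z/x_{k-1} \in \operatorname{dom} J$. For this competitor $u x_{k-1} = z$, so $T(u x_{k-1}) = Tz = f$, and therefore
\begin{equation*}
E_k\!\left(\frac{z}{x_{k-1}}\right) = \lambda_k H(\fd, f) + J\!\left(\frac{z}{x_{k-1}}\right) \le \lambda_k \delta^2 + J\!\left(\frac{z}{x_{k-1}}\right),
\end{equation*}
where the last step is the noise assumption \eqref{noise}. Since $E_k(u_k) \le E_k(z/x_{k-1})$ and $E_k(u_k) = \lambda_k H(\fd, Tx_k) + J(u_k)$, this chains to
\begin{equation*}
\lambda_k H(\fd, Tx_k) + J(u_k) \le \lambda_k \delta^2 + J\!\left(\frac{z}{x_{k-1}}\right),
\end{equation*}
which is exactly \eqref{multiplic_monotone2}.

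I do not expect any serious obstacle here; the argument is a direct comparison/testing argument. The only points requiring a little care are bookkeeping ones: checking that the chosen competitors ($u = 1$ and $u = z/x_{k-1}$) actually lie in the domain of $E_k$ (this is where $J(1) = 0$ and $z/x_{k-1} \in \operatorname{dom} J$ are used), handling the initial index $k = 0$ with the convention $x_{-1} = 1$, and using nonnegativity of $J$ together with $\lambda_k > 0$ to pass from the energy inequality to monotonicity of the bare residual. No convexity, no compactness, and no properties of $H$ beyond nonnegativity are needed for this lemma.
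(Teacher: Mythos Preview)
Your proposal is correct and follows essentially the same approach as the paper: use the minimality of $u_k$ in $E_k$ and test against the competitors $u=1$ (using $J(1)=0$) and $u=z/x_{k-1}$ (using the domain assumption together with \eqref{noise}) to obtain the two inequalities, then drop $J(u_k)\ge 0$ for monotonicity of the residual. Your write-up is simply more explicit about the bookkeeping (admissibility, the $k=0$ case, nonnegativity of $J$), but there is no substantive difference.
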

	
	\begin{proof}
		According to \eqref{multiplic}, one has
		\[
		\lambda_kH(\fd,T(x_{k}))+J(u_k)
		\leq \lambda_kH(\fd,T(ux_{k-1}))+J(u),
		\]
		for any feasible $u$.
		Using $u=1$ and then $u=z/x_{k-1}$ in \eqref{multiplic}, one obtains the two inequalities for any $k\geq 0$. Clearly, the first one implies that $\displaystyle H(\fd, Tx_k)$ decreases.
	\end{proof}
	
	\begin{remark}\label{comments1}
		Note that the condition      $z/x_{k-1}\in BV(\Omega)$  holds when $x_{k-1}$ is bounded away from zero, since the product of the two bounded variation functions $z$ and $1/x_{k-1}$ has bounded variation, according to \cite{amb_dal}. Indeed, $1/x_{k-1}$ belongs to $BV(\Omega)$ based on the chain rule for $\varphi\circ x_{k-1}=1/x_{k-1}$, since $\varphi(s)=1/s$  is Lipschitz when $s$ is bounded away from zero (see \cite{vol67}). Therefore, Lemma \ref{lemma1} works for the corresponding AA and RO models. Moreover, it is also applicable to the $\log$ models approached in Section \ref{sec:definedness}  due to \eqref{properties_multiplic} for $J=TV(\log)$, as $J(z/x_k)\leq J(z)+J(1/x_k)=J(z)+J(x_k)<\infty$. Last but not least, recall that well-definedness of $x_k$   is ensured in all these models when $T=I$ and the data $\fd\in L^{\infty}$ satisfy $\inf_{\Omega}\fd>0$.
	\end{remark}
	
	Actually, one can show additional convergence properties for the multiplicative  MHDM if the penalty $J$ has the properties  \eqref{properties_multiplic}.\medskip
	
	\begin{proposition}\label{estimate} If  \eqref{properties_multiplic} and \eqref{noise} are satisfied, and the iterates $x_k$ given by \eqref{multiplic} are well-defined, then the following estimate holds for any $k\geq 0$,
		\begin{equation}\label{residual_exact}
		H(\fd,T x_k)\leq\delta^2+ \frac{2J(z)}{(k+1)\lambda_0}. 
		\end{equation}

	\end{proposition}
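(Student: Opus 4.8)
The plan is to convert the per-scale estimate \eqref{multiplic_monotone2} of Lemma \ref{lemma1} into a bound on the Cesàro average of the residuals $H(\fd,Tx_k)$, and then to invoke the monotonicity of $k\mapsto H(\fd,Tx_k)$ that Lemma \ref{lemma1} also provides. First I would record that, under \eqref{properties_multiplic}, the hypothesis of the second part of Lemma \ref{lemma1} is met: subadditivity together with the inversion property give
$$J\!\left(\frac{z}{x_{k-1}}\right)\le J(z)+J(x_{k-1})\le J(z)+\sum_{j=0}^{k-1}J(u_j),$$
and the right-hand side is finite because $J(z)<\infty$ by assumption and each $u_j\in dom\,J$ (it minimizes $E_j$, whose value at the constant function $1$ is finite). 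Hence $z/x_{k-1}\in dom\,J$, so \eqref{multiplic_monotone2} applies and, after dividing by $\lambda_k>0$, yields for every $k$
$$H(\fd,Tx_k)+\frac{J(u_k)}{\lambda_k}\le \delta^2+\frac{J(z)}{\lambda_k}+\frac{1}{\lambda_k}\sum_{j=0}^{k-1}J(u_j).$$

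Next I would sum this over $k=0,\dots,m$ and swap the order of summation in the double sum on the right, obtaining
$$\sum_{k=0}^{m}H(\fd,Tx_k)+\sum_{k=0}^{m}\frac{J(u_k)}{\lambda_k}\le (m+1)\delta^2+J(z)\sum_{k=0}^{m}\frac{1}{\lambda_k}+\sum_{j=0}^{m-1}J(u_j)\sum_{k=j+1}^{m}\frac{1}{\lambda_k}.$$
The crucial point is that the doubling rule $\lambda_{k+1}=2\lambda_k$ (so that $\lambda_k=2^{k}\lambda_0$) makes the tail sums geometric: $\sum_{k=j+1}^{m}1/\lambda_k<1/\lambda_j$ and $\sum_{k=0}^{m}1/\lambda_k<2/\lambda_0$. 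Consequently the last term is at most $\sum_{j=0}^{m}J(u_j)/\lambda_j$, which cancels the corresponding sum on the left, leaving
$$\sum_{k=0}^{m}H(\fd,Tx_k)\le (m+1)\delta^2+\frac{2J(z)}{\lambda_0}.$$
Since Lemma \ref{lemma1} gives $H(\fd,Tx_m)\le H(\fd,Tx_k)$ for all $k\le m$, the left-hand side is at least $(m+1)H(\fd,Tx_m)$; dividing by $m+1$ and renaming $m$ as $k$ produces exactly \eqref{residual_exact}.

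I expect the only delicate points to be bookkeeping: confirming that $H(\fd,Tx_k)$, $J(u_k)$ and $J(z/x_{k-1})$ are all finite so that the finite sums may be rearranged and the $J(u_k)/\lambda_k$ terms legitimately cancelled (this is precisely what the finiteness observations together with \eqref{properties_multiplic} supply), and tracking the geometric-series constant so that the factor $2$ in \eqref{residual_exact} comes out sharp. The single load-bearing idea is the cancellation in the summed inequality, and it is exactly this step that forces the use of the geometric update for $\lambda_k$; the rest is routine.
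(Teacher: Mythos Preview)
Your argument is correct, and you arrive at exactly the same summed inequality $\sum_{k=0}^{m}H(\fd,Tx_k)\le (m+1)\delta^2+2J(z)/\lambda_0$ that the paper reaches, after which the monotonicity step is identical. The route, however, differs. You expand $J(z/x_{k-1})$ all the way down via repeated subadditivity to $J(z)+\sum_{j<k}J(u_j)$, then control the resulting double sum by the geometric tail bound $\sum_{k>j}1/\lambda_k<1/\lambda_j$, which lets the $J(u_j)/\lambda_j$ terms on both sides absorb one another. The paper instead applies subadditivity and the inversion property only once, in the form
\[
J(z/x_k)\le J(z/x_{k-1})+J(u_k),
\]
and substitutes this directly into \eqref{multiplic_monotone2} to obtain
\[
H(\fd,Tx_k)+\tfrac{1}{\lambda_k}J(z/x_k)\le \delta^2+\tfrac{2}{\lambda_k}J(z/x_{k-1})=\delta^2+\tfrac{1}{\lambda_{k-1}}J(z/x_{k-1}).
\]
Summing over $k$ then telescopes cleanly, leaving only the boundary term $\tfrac{2}{\lambda_0}J(z)$ (from $x_{-1}=1$) and the surviving nonnegative term $\tfrac{1}{\lambda_k}J(z/x_k)$, which is discarded. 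The paper's telescoping argument is shorter and tracks the single quantity $J(z/x_k)$ rather than all of the $J(u_j)$ individually; your version is a bit more hands-on but has the virtue of making explicit exactly where the doubling $\lambda_{k+1}=2\lambda_k$ enters, and it would adapt more transparently to variants where the per-step bound on $J(z/x_{k-1})$ takes a different form.
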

	
	\begin{proof} Since $J$ satisfies \eqref{properties_multiplic} and $u_k=x_k/x_{k-1}$, one has for any $k\geq 0$,
		\[
		J(z/x_k)-J(z/x_{k-1})\leq  J(u_k).
		\]
		This inequality combined with \eqref{multiplic_monotone2} yields
		\begin{equation}\label{eq1}
		H(\fd, Tx_k)+\frac{1}{\lambda_k}J(z/x_k)\leq\delta^2+ \frac{2}{\lambda_k}J(z/x_{k-1})=\delta^2+\frac{1}{\lambda_{k-1}}J(z/x_{k-1}).
		\end{equation}
		By writing \eqref{eq1} for indices $0,1,...,k$ and summing up, one has for any $k\geq 0$,
		\begin{equation*}
		(k+1)H(\fd, Tx_k)+\frac{1}{\lambda_k}J(z/x_k)\leq \sum_{j=0}^k H(\fd, Tx_j)+\frac{1}{\lambda_k}J(z/x_k) \leq (k+1)\delta^2+ \frac{2}{\lambda_{0}}J(z),
		\end{equation*}
		where the left inequality follows from the monotonicity of the data fidelity term cf. Lemma \ref{lemma1}, and the right one follows from $x_{-1}=1$. This yields \eqref{residual_exact}.
	\end{proof}%
	Clearly, inequality \eqref{residual_exact} holds for the  $\log$ approaches in Section \ref{sec:definedness}, as explained in Remark \ref{comments1}, but not necessarily for the  AA and RO models.\medskip

	\textbf{Summed-MHDM for non-quadratic data fidelity}
	The work \cite{li_res_ves} provided error estimates for MHDM  in case of quadratic data-fidelity. Fortunately, the proof techniques can be similarly employed in the case of  non-quadratic data-fidelities $H(\fd, e^w)$ as long as the existence of minimizers $w_k$ is guaranteed. Hence,  the following result  holds for the SO MHDM defined by \eqref{SO_MHDM} (compare to \cite[Proposition 3.1]{li_res_ves}).
	
	\begin{proposition} Let $f^\delta\in L^\infty(\Omega)$ be such that  $\inf_\Omega f^\delta>0$. Then the  data-fidelity $H$ is monotonically decreasing for increasing $k$ and
		\begin{equation*}
		H(f^\delta,  e^{w_k})\leq \delta^2+\frac{2TV(\log(z))}{\lambda_0(k+1)},\,k\in\N.
		\end{equation*}
		
	\end{proposition}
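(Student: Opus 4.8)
The plan is to reuse the structure of Proposition~\ref{estimate} (and of Lemma~\ref{lemma1} after the logarithmic substitution $w=\log u$) applied to the specific SO MHDM scheme \eqref{SO_MHDM}, identifying the penalty with $J=TV(\log(\cdot))$ in the original variables, equivalently $\tilde J(w)=TV(w)$ in the $w$-variables, and the data-fidelity with the Itakura--Saito divergence $H(f^\delta,e^w)=\int_\Omega\big(f^\delta e^{-w}+w-\log(f^\delta)-1\big)$. First I would record that this scheme is well-defined: by \cite{jin_yang_2010}, for $f^\delta\in L^\infty(\Omega)$ with $\inf_\Omega f^\delta>0$ the minimizer $w_0$ exists, is unique, and satisfies $\inf_\Omega\log f^\delta\le w_0\le\sup_\Omega\log f^\delta$; since the updated data $f^\delta e^{-y_0}=f^\delta/e^{w_0}$ then also lie in $L^\infty(\Omega)$ and are bounded away from $0$, the same argument yields existence and the analogous bounds at every step, so $y_{k-1}=\sum_{j=0}^{k-1}w_j\in BV(\Omega)$ for all $k$. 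In particular $\log(z)-y_{k-1}\in BV(\Omega)$ whenever $TV(\log(z))<\infty$, which we are assuming via $J(z)<\infty$.

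Next I would translate the two basic inequalities of Lemma~\ref{lemma1} into the $w$-setting. Testing the minimization \eqref{SO_MHDM} with the competitor $w=0$ (so that $y_{k-1}+w=y_{k-1}$) gives the monotonicity
\[
\lambda_k H(f^\delta,e^{y_k})+TV(w_k)\le\lambda_k H(f^\delta,e^{y_{k-1}}),
\]
hence $H(f^\delta,e^{y_k})$ is nonincreasing in $k$ — which is the first assertion of the proposition, noting $e^{y_k}=e^{w_k}$ is the multiplicative partial product only in the sense that $y_k=\sum_{j\le k}w_j$, so $e^{y_k}$ plays the role of $Tx_k$ with $T=I$. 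Testing instead with $w=\log(z)-y_{k-1}$ (admissible by the $BV$ remark above, and because $H(f^\delta,e^{\log z})=H(f^\delta,z)\le\delta^2$ by \eqref{noise}) gives
\[
\lambda_k H(f^\delta,e^{y_k})+TV(w_k)\le\lambda_k\delta^2+TV\big(\log(z)-y_{k-1}\big).
\]

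Then I would run the telescoping argument of Proposition~\ref{estimate} verbatim. Using the triangle inequality for $TV$ in the form $TV(\log(z)-y_k)-TV(\log(z)-y_{k-1})\le TV(w_k)$ (since $w_k=y_k-y_{k-1}$) and substituting into the previous display, dividing by $\lambda_k$ and using $\lambda_k=2\lambda_{k-1}$, one obtains
\[
H(f^\delta,e^{y_k})+\tfrac{1}{\lambda_k}TV(\log(z)-y_k)\le\delta^2+\tfrac{1}{\lambda_{k-1}}TV(\log(z)-y_{k-1}).
\]
Summing this over indices $0,1,\dots,k$, using $y_{-1}=0$ so the initial term is $\tfrac{1}{\lambda_0}TV(\log z)$, invoking the monotonicity of $H(f^\delta,e^{y_j})$ to bound $\sum_{j=0}^k H(f^\delta,e^{y_j})\ge(k+1)H(f^\delta,e^{y_k})$, and dropping the nonnegative term $\tfrac{1}{\lambda_k}TV(\log(z)-y_k)$ on the left, yields $(k+1)H(f^\delta,e^{y_k})\le(k+1)\delta^2+\tfrac{2}{\lambda_0}TV(\log z)$, i.e.
\[
H(f^\delta,e^{w_k})\le\delta^2+\frac{2TV(\log(z))}{\lambda_0(k+1)},
\]
after rewriting $e^{y_k}$ consistently with the statement's notation $e^{w_k}$ (with $w_k$ there denoting the cumulative iterate $y_k$; I would add a clarifying sentence reconciling notation).

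The only genuinely nontrivial point — everything else is a transcription of earlier results — is the admissibility of the competitor $w=\log(z)-y_{k-1}$ and, more basically, that $\log(z)\in BV(\Omega)$; here I would be careful to state that this is exactly the hypothesis $J(z)=TV(\log z)<\infty$ imported from \eqref{noise}-context, together with the fact that $y_{k-1}\in BV(\Omega)\cap L^\infty(\Omega)$ from the iterated existence/$L^\infty$-bounds, so their difference is in $BV(\Omega)$ and has finite energy. I expect this bookkeeping (ensuring each $y_{k-1}$ inherits the $L^\infty$ bounds and that the Itakura--Saito term stays finite along the competitor) to be the main thing to get right, but it follows the pattern already laid out in Remark~\ref{comments1} and the discussion following \eqref{SO_MHDM}.
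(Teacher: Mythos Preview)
Your proposal is correct and faithfully reconstructs the argument the paper gestures at: the paper does not give an explicit proof but simply states that the techniques of \cite[Proposition~3.1]{li_res_ves} carry over to the non-quadratic fidelity $H(f^\delta,e^w)$, and your write-up is precisely that adaptation (equivalently, the additive transcription of Lemma~\ref{lemma1} and Proposition~\ref{estimate} via the substitution $w=\log u$, $J=TV$). Your remark on the notational slip---the statement's $e^{w_k}$ should read $e^{y_k}$---is also well taken.
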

	
	\subsection{Discrepancy principle stopping rule}
	
	Computing too many multiscale hierarchical iterations  can result in getting back more and more noise in the reconstructed image. Therefore, stopping the procedure  early enough is necessary. In view of this, we propose a stopping rule for \eqref{multiplic} and show convergence properties.
	Let us define the following stopping index,
	\begin{equation}\label{discrepancy_index}
	k^*(\delta):=\max\{k\in\N: H(f^\delta, Tx_k)\geq \tau\delta^2\},\quad\mbox{for some}\,\tau>1.
	\end{equation}
	As shown below, this index exists and convergence of the data fidelity to zero is guaranteed.
	
	\begin{proposition}\label{residual_conv}
		Assume that \eqref{properties_multiplic} and  \eqref{noise} are satisfied, and the iterates $x_k$ given by \eqref{multiplic} are well-defined. Then 
		the stopping index  \eqref{discrepancy_index} is finite.
		If $(k^*(\delta))$ is unbounded as $\delta\to 0$, then  $\displaystyle{\lim_{\delta\to 0}H(\fd,Tx_{k^*(\delta)})=0}$ holds.
	\end{proposition}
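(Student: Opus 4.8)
The plan is to read everything off the a priori residual estimate of Proposition~\ref{estimate}, which applies here because \eqref{properties_multiplic}, \eqref{noise} hold and the iterates $x_k$ are assumed well-defined: for every $k\ge 0$,
\[
0\le H(\fd,Tx_k)\le \delta^2+\frac{2J(z)}{(k+1)\lambda_0}.
\]
Since $J(z)<\infty$, the right-hand side decreases to $\delta^2$ as $k\to\infty$.

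First I would establish finiteness of the stopping index. Fix $\delta>0$. Because $\tau>1$, the displayed estimate gives $H(\fd,Tx_k)<\tau\delta^2$ as soon as $\frac{2J(z)}{(k+1)\lambda_0}<(\tau-1)\delta^2$, i.e. whenever $k+1>\frac{2J(z)}{(\tau-1)\lambda_0\delta^2}$. Hence the set $\{k\in\N:\ H(\fd,Tx_k)\ge\tau\delta^2\}$ appearing in \eqref{discrepancy_index} is bounded above by $\frac{2J(z)}{(\tau-1)\lambda_0\delta^2}$, so its maximum $k^*(\delta)$ is finite; in fact $k^*(\delta)\le \frac{2J(z)}{(\tau-1)\lambda_0\delta^2}$. (If this set is empty, the scheme is stopped at the initial step and there is nothing to prove; by the monotonicity of $H(\fd,Tx_k)$ in $k$ from Lemma~\ref{lemma1}, the set is in any case an initial segment of $\N$, so the dichotomy is exhaustive.)

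Next I would prove the convergence of the residual. Interpreting ``$(k^*(\delta))$ unbounded as $\delta\to0$'' as $k^*(\delta)\to\infty$ (or, equivalently, arguing along an arbitrary sequence $\delta_n\to0$ with $k^*(\delta_n)\to\infty$), I evaluate Proposition~\ref{estimate} at the index $k=k^*(\delta)$:
\[
0\le H(\fd,Tx_{k^*(\delta)})\le \delta^2+\frac{2J(z)}{(k^*(\delta)+1)\lambda_0}.
\]
As $\delta\to0$ the first term on the right tends to $0$, and the second term tends to $0$ because $k^*(\delta)\to\infty$ and $J(z)<\infty$. By squeezing, $H(\fd,Tx_{k^*(\delta)})\to0$, which is the claim.

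There is essentially no analytical obstacle: the proposition is a direct corollary of the estimate \eqref{residual_exact}. The only points deserving care are bookkeeping ones, and I would flag them explicitly: (i) fixing a convention for $k^*(\delta)$ when the discrepancy set in \eqref{discrepancy_index} is empty (otherwise ``$\max$'' is over the empty set); and (ii) making precise the phrase ``unbounded as $\delta\to0$'' — since $k^*(\cdot)$ need not be monotone in $\delta$ and the data $\fd$ also varies with $\delta$, a mere $\limsup_{\delta\to0}k^*(\delta)=\infty$ would only yield convergence of the residual along a subsequence, so one should state the hypothesis as $k^*(\delta)\to\infty$ (or phrase the conclusion along sequences).
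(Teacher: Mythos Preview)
Your proof is correct and follows essentially the same route as the paper: both derive finiteness of $k^*(\delta)$ and the residual convergence directly from the a priori estimate \eqref{residual_exact} of Proposition~\ref{estimate}, with the paper obtaining the (equivalent) bound $k^*(\delta)\le \frac{2J(z)}{\lambda_0(\tau-1)\delta^2}-1$ by combining \eqref{residual_exact} at $k=k^*(\delta)$ with the lower bound $\tau\delta^2\le H(\fd,Tx_{k^*(\delta)})$ from \eqref{discrepancy_index}. Your additional remarks on the empty-set convention and the precise meaning of ``unbounded as $\delta\to0$'' are well taken but do not alter the argument.
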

	
	\begin{proof}
		By writing \eqref{residual_exact} for $k=k^*(\delta)$ and using \eqref{discrepancy_index}, it follows that
		\begin{equation*}
		\tau \delta^2\leq \delta ^2+ \frac{2}{\lambda_{0}(k^*(\delta)+1)}J(z)
		\end{equation*}
		and thus, the stopping index is finite:
		\begin{equation*}
		k^*(\delta)\leq\frac{2J(z)}{\lambda_0(\tau-1)\delta^2}-1.
		\end{equation*}
		If $(k^*(\delta))$ is unbounded, then \eqref{residual_exact} written for $k=k^*(\delta)$ implies  $\displaystyle{\lim_{\delta\to 0}H(\fd,Tx_{k^*(\delta)})=0}$.
	\end{proof}
	
	\subsection{Convergence of multiplicative MHDM for particular models}\label{sec:particular}
	
	This subsection deals with convergence of the MHDM iterates for  the particular  models considered in the current study. Note that the residual $H$ converges to zero when the procedure is stopped earlier at $k^*(\delta)$ cf. \eqref{discrepancy_index}, as seen in the previous subsection. We now analyze the implications of this  convergence   
	in case of the two  data fidelities employed in the proposed MHDM, namely  the quadratic term of the RO model and the Itakura-Saito distance. As opposed to the MHDM concerning additive noise in images, where convergence is shown with respect to the $L^2$ norm, we can prove  only pointwise convergence on subsequences a.e. for the MHDM corresponding to multiplicative noise.
	
	\begin{proposition} Assume that \eqref{properties_multiplic} and  \eqref{noise} are satisfied, and the iterates $x_k$ given by \eqref{multiplic} are well-defined, whenever the data fidelity $H$ is defined by  \eqref{quadratic_data} or \eqref{IS_data}. If $(k^*(\delta))$ is unbounded as $\delta\to 0$, then $(Tx_{k^*(\delta)})$ converges a.e. to $f$ on a subsequence. In particular for the denoising case, one has a.e. convergence of $(x_{k^*(\delta)})$ on a subsequence to the true image.
	\end{proposition}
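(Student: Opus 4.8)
The plan is to derive everything from the convergence $\lim_{\delta\to 0}H(\fd,Tx_{k^*(\delta)})=0$ already established in Proposition~\ref{residual_conv} under the present hypotheses, together with the noise condition \eqref{noise}, which gives $H(\fd,f)\le\delta^2\to 0$. The crucial observation is that both admissible fidelities have the form $H(\fd,v)=\int_\Omega\psi\!\left(\fd/v\right)$, where $\psi(t)=(t-1)^2$ in the case \eqref{quadratic_data} and $\psi(t)=t-\log t-1$ (the scalar Itakura--Saito function) in the case \eqref{IS_data}. In both cases $\psi\colon(0,\infty)\to[0,\infty)$ is continuous, vanishes exactly at $t=1$, and its sublevel sets $\{\psi<\e\}$ are bounded open intervals shrinking to $\{1\}$ as $\e\to 0^+$; for the Itakura--Saito function this uses $\psi(t)\to\infty$ both as $t\to 0^+$ and as $t\to\infty$. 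Hence, whenever $\psi(t_j)\to 0$ for positive numbers $t_j$, one has $t_j\to 1$.

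First I would record the pointwise bookkeeping that makes the ratios below legitimate: $f>0$ a.e.\ (from the finiteness of $H(\fd,f)$, since $\fd\ge\inf_\Omega\fd>0$), $f<\infty$ a.e.\ (as $f\in L^2(\Omega)$), and $Tx_k>0$ a.e.\ with $x_k$ a.e.\ finite, which is built into the well-definedness of the scheme for the models considered here (the iterates being products of positive functions, e.g.\ $e^{w_j}$, or functions with $\inf_\Omega\fd\le u_j\le\sup_\Omega\fd$). Now pass to a subsequence of $\delta\to 0$ as follows. From $H(\fd,Tx_{k^*(\delta)})\to 0$ and the nonnegativity of the integrand, $\psi(\fd/Tx_{k^*(\delta)})\to 0$ in $L^1(\Omega)$, so along a subsequence $\psi(\fd/Tx_{k^*(\delta)})\to 0$ a.e., whence $\fd/Tx_{k^*(\delta)}\to 1$ a.e.\ by the pointwise property of $\psi$. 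Along the same subsequence, $H(\fd,f)\le\delta^2\to 0$ gives $\psi(\fd/f)\to 0$ in $L^1(\Omega)$, so, passing to a further subsequence, $\fd/f\to 1$ a.e.

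Next I would combine the two limits: at a.e.\ point of $\Omega$ and along the common subsequence,
\[
\frac{Tx_{k^*(\delta)}}{f}=\frac{\fd/f}{\fd/Tx_{k^*(\delta)}}\longrightarrow\frac{1}{1}=1,
\]
since the denominator is positive and tends to $1$. As $f<\infty$ a.e., this gives $Tx_{k^*(\delta)}-f=f\bigl(Tx_{k^*(\delta)}/f-1\bigr)\to 0$ a.e.\ along the subsequence, which is the asserted a.e.\ convergence of $(Tx_{k^*(\delta)})$ to $f$; since the original sequence of $\delta$'s was arbitrary, this proves the claim. In the denoising case $T=I$ it reads $x_{k^*(\delta)}\to f=z$ a.e., i.e.\ the stated convergence of the reconstructions to the true image.

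The only genuinely delicate step is the implication $\psi(t_j)\to 0\Rightarrow t_j\to 1$ for the Itakura--Saito function, which rests on its coercivity at both $0$ and $\infty$ (for the quadratic $\psi$ it is immediate); the rest is a routine extraction of a.e.-convergent subsequences from $L^1$ convergence, plus the positivity/finiteness bookkeeping. This argument also makes transparent why, in contrast with the additive-noise MHDM, one obtains here only subsequential a.e.\ convergence: smallness of $\fd/v-1$ does not control $v-\fd$ in any norm without a uniform positive lower bound on $v$.
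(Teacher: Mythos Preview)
Your argument is correct and follows the same overall strategy as the paper---invoke Proposition~\ref{residual_conv} to get $H(\fd,Tx_{k^*(\delta)})\to 0$, pass from $L^1$ convergence of the integrand to a.e.\ convergence on a subsequence, and conclude $Tx_{k^*(\delta)}\to f$ a.e.---but your execution differs from the paper's in two ways worth noting. First, you treat both fidelities at once via the scalar function $\psi$ and the implication $\psi(t_j)\to 0\Rightarrow t_j\to 1$; the paper handles the quadratic and Itakura--Saito cases separately, and for the latter argues indirectly by first establishing pointwise boundedness of $(Tx_{k^*(\delta)})$, then extracting a subsequential a.e.\ limit $g$, and finally showing $d(f,g)=0$ by strict convexity of the Burg entropy. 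Your ratio argument $\fd/Tx_{k^*(\delta)}\to 1$ is more direct and sidesteps the somewhat delicate step of producing a \emph{common} a.e.-convergent subsequence from mere pointwise boundedness. Second, you make explicit the use of $H(\fd,f)\le\delta^2\to 0$ to obtain $\fd/f\to 1$ a.e.\ along a further subsequence, which is exactly what is needed to pass from ``$Tx_{k^*(\delta)}$ close to $\fd$'' to ``$Tx_{k^*(\delta)}$ close to $f$''; the paper's proof leaves this passage implicit (it writes that $d(\fd,Tx_{k^*(\delta)})$ converges a.e.\ to $d(f,g)$ without spelling out why $\fd$ may be replaced by $f$). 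Both approaches are valid; yours is slightly more transparent and uniform across the two fidelities.
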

	
	\begin{proof} According to Proposition \ref{residual_conv}, one has $\displaystyle{\lim_{\delta\to 0}H(\fd,Tx_{k^*(\delta)})=0}$. If the data fidelity is given by \eqref{quadratic_data},
		then  $\left(\frac{f^\delta}{Tx_{k^*(\delta)}}\right)$ converges strongly to $1$  in $L^2(\Omega)$. This yields a.e. convergence  of $(\frac{f^\delta}{Tx_{k^*(\delta)}})$ to 1 on a subsequence, thus a.e.\, convergence of $(Tx_{k^*(\delta)})$ to $f$ on a subsequence.  
		Now consider $H$ given by \eqref{IS_data}. Then the convergence of the residual (cf. Proposition \ref{residual_conv}) implies that the positive sequence $(d(\fd,Tx_{k^*(\delta)}))$ converges to zero in the $L^1(\Omega)$ norm, where
		$d(\fd,Tx_{k^*(\delta)})=\frac{\fd}{Tx_{k^*(\delta)}}+\log (Tx_{k^*(\delta)})-\log (\fd)-1$.  Consequently, it converges a.e. to zero on a subsequence.  It follows that the sequence $
		(Tx_{k^*(\delta)})$ is (a.e.) pointwise bounded in $[0,\infty)$, otherwise a subsequence would diverge to $+\infty$, which would contradict $d(\fd,Tx_{k^*(\delta)})\to 0$. Therefore, $(Tx_{k^*(\delta)})$ converges on a subsequence to some nonnegative function $g$ a.e., implying a.e. convergence of $(d(\fd,Tx_{k^*(\delta)}))$ to $d(f,g)$. Uniqueness of the limit yields $d(f,g)=0$ a.e., that is $g=f$ a.e., due to the strict convexity of the Burg entropy which defines the (pointwise) Itakura Saito  distance $d$.
	\end{proof}
	
	\section{Extensions of the multiplicative MHDM}\label{sec:extensions}
	\subsection{A tight multiplicative MHDM}\label{sec:tighter_MHDM}
	In this section, we adapt to the multiplicative noise case the  tight hierarchical decomposition method \cite{mod_nac_ron} proposed in the additive noise context. That tight version incorporated an additional penalization, namely on the entire approximation $(x_k)$, in order to obtain better convergence properties of $(x_k)$. Since this section follows the structure of the tight MHDM in the case of additive noise \cite{li_res_ves}, we introduce the tight method in the new setting by omitting proof details.
	
	Let $(a_k)$ be a  sequence of nonnegative numbers such that for any $k\geq 1$,
	\begin{equation}\label{an}
	\lim_{k\to\infty}a_k=0\quad \mbox{and}\quad a_k\leq a_{k-1}.
	\end{equation}
	Set $\lambda_0$ to be a positive number and let $(\lambda_k)\subset (0,\infty)$ verify the following relaxed inequality
	\begin{equation}\label{sequence}
	2\lambda_{k}\leq\lambda_{k+1},\quad k\geq 0,
	\end{equation}
	rather than the equality $2\lambda_{k}=\lambda_{k+1}$. Finally, determine $u_k\in BV(\Omega)$ as a solution of 
	\begin{equation*}
	\min_u F_k(u),\quad \mbox{with}\quad F_k(u)=\lambda_k H(\fd,T(ux_{k-1}))+\lambda_ka_kJ(ux_{k-1})+J(u),
	\end{equation*}
	with, as before, $\displaystyle{x_{k-1}=\prod_{j=0}^{k-1}u_j}$, $x_{-1}=1$. The tight formulation, then, is augmented by a new penalization term $\lambda_k a_k J(ux_{k-1})$.
	
	\begin{remark} The tight versions of   the  denoising models presented in Section \ref{sec:definedness} are also well-defined (similar arguments).
	\end{remark}
	
	Under the assumptions of Lemma \ref{lemma1}, one can derive similarly  the following inequalities,
	\begin{equation}\label{iter4_tight}
	\lambda_k H(\fd,Tx_{k})+\lambda_k a_kJ(x_k)+J(u_k)\leq \lambda_k H(\fd,Tx_{k-1})+\lambda_k a_k J(x_{k-1}),
	\end{equation}
	\begin{equation*}
	\lambda_k H(\fd,Tx_{k})+\lambda_k a_kJ(x_k)+J(u_k)\leq\lambda_k a_kJ(z)+ J(z/x_{k-1}) + \lambda_k \delta^2, \quad k\geq 0.
	\end{equation*}
	Note that \eqref{iter4_tight} yields the decreasing monotonicity of $ H(\fd,Tx_{k})+a_kJ(x_k)$, which is a type of residual in the tight method. If we further require
	\begin{equation}\label{series}
	\displaystyle{\sum_{k=0}^\infty a_k<\infty}
	\end{equation}
	and define the stopping index also by a discrepancy rule
	\begin{equation}\label{discrepancy_tight}
	k^*(\delta):=\max\{k\in\N: H(\fd,Tx_{k})+a_kJ(x_k)\geq \tau\delta^2\},\quad\mbox{for some}\,\tau>1,
	\end{equation}
	then the results below can be established in a similar manner to the ones for the multiplicative MHDM when  $J$ verifies \eqref{properties_multiplic}.
	
	\begin{proposition}\label{estimate_tight_noise}
		Let conditions  \eqref{properties_multiplic},
		\eqref{noise}, \eqref{an} and  \eqref{sequence}  be satisfied. Then the following estimate holds for any  $k\geq 0$,
		\begin{equation*}
		H(\fd,Tx_{k})+a_kJ(x_k)\leq \delta^2+ \left(\sum_{j=0}^k a_j\right)\frac{J(z)}{k+1}+ \frac{2J(z)}{(k+1)\lambda_0}.
		\end{equation*}
		Moreover, if  \eqref{series} is verified, then the stopping index defined by \eqref{discrepancy_tight} is finite. Additionally,
		\begin{enumerate}
			\item If $(k^*(\delta))$ is unbounded, then $\displaystyle{\lim_{\delta\to 0}H(\fd,Tx_{k^*(\delta)})=0}$ and $\displaystyle{\lim_{\delta\to 0}a_{k^*(\delta)}J(x_{k^*(\delta)})=0}$.
			\item If the stopping index is chosen as $\displaystyle{k^*(\delta)\sim \frac{1}{\delta^2}}$, then
			$$H(\fd,Tx_{k^*(\delta)})+a_{k^*(\delta)}J(x_{k^*(\delta)})=O({\delta^2}).$$
		\end{enumerate}
	\end{proposition}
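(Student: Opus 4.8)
The plan is to mirror the proof of Proposition~\ref{estimate}, carrying along the extra term $\lambda_k a_k J(\cdot)$ produced by the tight penalization. First I would record a per-step estimate. Starting from the second inequality displayed after \eqref{iter4_tight},
\[
\lambda_k H(\fd,Tx_{k})+\lambda_k a_kJ(x_k)+J(u_k)\leq\lambda_k a_kJ(z)+ J(z/x_{k-1}) + \lambda_k \delta^2,
\]
I would use \eqref{properties_multiplic}, which yields $J(z/x_k)=J\!\big((z/x_{k-1})\cdot u_k^{-1}\big)\le J(z/x_{k-1})+J(u_k)$, that is $J(u_k)\ge J(z/x_k)-J(z/x_{k-1})$. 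Substituting this, moving the resulting $J(z/x_{k-1})$ term to the right, and dividing by $\lambda_k$ gives
\[
H(\fd,Tx_{k})+ a_kJ(x_k)+\tfrac{1}{\lambda_k}J(z/x_k)\leq \delta^2+ a_kJ(z)+ \tfrac{2}{\lambda_k}J(z/x_{k-1}).
\]
The relaxed condition \eqref{sequence} is precisely what is needed here: $2\lambda_{k-1}\le\lambda_k$ gives $\tfrac{2}{\lambda_k}\le\tfrac{1}{\lambda_{k-1}}$ for $k\ge1$, so the last term is bounded by $\tfrac{1}{\lambda_{k-1}}J(z/x_{k-1})$, turning these into a telescoping family; for $k=0$ one keeps $\tfrac{2}{\lambda_0}J(z/x_{-1})=\tfrac{2}{\lambda_0}J(z)$, using $x_{-1}=1$.

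Next I would sum over $j=0,\dots,k$. The terms $\tfrac{1}{\lambda_j}J(z/x_j)$ telescope, leaving only the nonnegative $\tfrac{1}{\lambda_k}J(z/x_k)$ on the left and the single boundary term $\tfrac{2}{\lambda_0}J(z)$ on the right, while the contributions $a_jJ(z)$ accumulate to $\big(\sum_{j=0}^k a_j\big)J(z)$. To turn $\sum_{j=0}^k\big(H(\fd,Tx_j)+a_jJ(x_j)\big)$ into $(k+1)$ times the $k$-th term, I would use the monotonicity of the augmented residual: dividing \eqref{iter4_tight} by $\lambda_k$, dropping $J(u_k)\ge0$, and using $a_k\le a_{k-1}$ together with $J\ge0$ yields $H(\fd,Tx_k)+a_kJ(x_k)\le H(\fd,Tx_{k-1})+a_{k-1}J(x_{k-1})$. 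Hence $(k+1)\big(H(\fd,Tx_k)+a_kJ(x_k)\big)\le\sum_{j=0}^k\big(H(\fd,Tx_j)+a_jJ(x_j)\big)$, and after discarding $\tfrac{1}{\lambda_k}J(z/x_k)$ and dividing by $k+1$ the claimed estimate follows.

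For the stopping-index statements I would first invoke \eqref{series}: with $S:=\sum_{j=0}^\infty a_j<\infty$ the estimate simplifies to $H(\fd,Tx_k)+a_kJ(x_k)\le\delta^2+\tfrac{J(z)}{k+1}\big(S+\tfrac{2}{\lambda_0}\big)$, so for $k$ large the left-hand side falls below $\tau\delta^2$; thus the set in \eqref{discrepancy_tight} is finite, and its maximum $k^*(\delta)$ is attained whenever it is nonempty (otherwise the scheme is stopped at the start). Evaluating the simplified estimate at $k=k^*(\delta)$ and using $H(\fd,Tx_{k^*(\delta)})+a_{k^*(\delta)}J(x_{k^*(\delta)})\ge\tau\delta^2$ gives $(\tau-1)\delta^2\le\tfrac{J(z)}{k^*(\delta)+1}\big(S+\tfrac{2}{\lambda_0}\big)$, hence $k^*(\delta)\le\tfrac{J(z)(S+2/\lambda_0)}{(\tau-1)\delta^2}-1<\infty$. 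For item~1, the same estimate at $k=k^*(\delta)$ bounds the two nonnegative quantities $H(\fd,Tx_{k^*(\delta)})$ and $a_{k^*(\delta)}J(x_{k^*(\delta)})$ by $\delta^2+\tfrac{J(z)(S+2/\lambda_0)}{k^*(\delta)+1}$, which tends to $0$ as $\delta\to0$ once $k^*(\delta)\to\infty$. For item~2, if $k^*(\delta)+1$ is of order $\delta^{-2}$, the second summand is itself $O(\delta^2)$, and hence so is the whole left-hand side.

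The argument is essentially routine given Proposition~\ref{estimate} and the additive-noise tight MHDM of \cite{li_res_ves}; the only points that genuinely use the tight setting are the observation that the relaxed growth \eqref{sequence} still produces a telescoping structure (the inequality $\tfrac{2}{\lambda_k}\le\tfrac{1}{\lambda_{k-1}}$ points the right way) and the monotonicity of the new residual $H(\fd,Tx_k)+a_kJ(x_k)$, so I do not anticipate a serious obstacle.
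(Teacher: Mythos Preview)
Your proposal is correct and follows essentially the same route the paper indicates (the paper omits the proof, stating it is established in a similar manner to Proposition~\ref{estimate} and the additive-noise tight MHDM of \cite{li_res_ves}). Your per-step estimate obtained via \eqref{properties_multiplic}, the telescoping afforded by \eqref{sequence}, the monotonicity of $H(\fd,Tx_k)+a_kJ(x_k)$ from \eqref{iter4_tight} together with $a_k\le a_{k-1}$, and the subsequent stopping-index arguments all match what the paper sketches.
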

	By  adapting the techniques from  \cite[Section 4]{li_res_ves} to the multiplicative noise case with the help of  the  condition $\limsup_{k\to\infty}\frac{2^k}{\lambda_k a_k}=0$, one can show  $\displaystyle{J(x_{k^*(\delta)})\to J(z)}$, demonstrating that the recoveries have the same level regularity as the clean image. Compare also  to \cite[Theorem 2.5]{mod_nac_ron} which addresses the tight summed-MHDM. Additionally, the convergence in the sense of Subsection \ref{sec:particular} holds.
	
	\subsection{A refined multiplicative MHDM}\label{sec:refined_MHDM}
	In order to promote specific properties of the $u_k$ components, we propose a multiplicative counterpart of the refined method introduced in \cite{li_res_ves}. Thus, we allow the penalization on the hierarchical component to be a functional different from $J$, that is different from $TV$ or $TV(\log)$. Although it can vary in every iteration as stated in \cite{li_res_ves}, we consider it fixed (for fixed $J$) hereafter and denote it by $R$. 

	In particular, we require $R:L^2(\Omega)\to\R\cup\{\infty\}$ to be a seminorm which is weakly lower semicontinuous and verifies the following inequality for some $c>0$:
	\begin{equation*}
	R(u)\leq cJ(u), \forall u\in dom\,R.
	\end{equation*}
	Construct a  sequence $(u_k)\subset BV(\Omega)$ with $u_k$ as a solution of
	\begin{equation*}
	\min_{u}F_k(u),\quad \mbox{with}\quad F_k(u)=\lambda_k H(\fd,T(ux_{k-1}))+\lambda_ka_kJ(ux_{k-1})+R(u), 
	\end{equation*}
	where $\lambda_k$ and $a_k$ are defined as in the tight formulation. One can similarly derive the estimate
	\begin{equation*}
	H(\fd,Tx_{k})+a_kJ(x_k)\leq \delta^2+ \left(\sum_{j=0}^k a_j\right)\frac{J(z)}{k+1}+ \frac{2R(z)}{(k+1)\lambda_0},
	\end{equation*}
	as well as the same convergence results under the same assumptions, in addition to the ones above for $R$. An improved behavior (as compared to the tight and the regular MHDM versions) will be shown  numerically by considering $R=\|\cdot\|_*$ or $R=\|\log(\cdot)\|_*$ when  $J=TV$ or $J=TV(\log)$, respectively. 
	%
	%
	\section{Numerical schemes for multiplicative MHDM minimiza- tion
	} \label{sec:num_schemes}
	Here we introduce numerical discretizations for the three classes of MHDM problems we consider: Shi-Osher adaptations, AA-like models, and TNV inspired methods. 
	\subsection{Shi-Osher model adaptations}
	\subsubsection{Shi-Osher MHDM}
	
	\textbf{Discretization of Euler-Lagrange equations:}\\
	We develop a numerical scheme for the Shi-Osher (SO) model adapted to multiscale hierarchical decomposition (MHDM). Our goal is to recover $w_k$ which satisfies (\ref{SO_MHDM}).
	
	That is, given a partial reconstruction $y_{k-1}:=\sum_{j=0}^{k-1} w_j$, we seek a sufficiently regular $w_k$---as imposed by $TV(\cdot)$---so that the sum $w_k+y_{k-1}$ fits $f^\delta $ according to the data fidelity term. 
	Using a gradient descent
	scheme to solve the associated Euler-Lagrange equation for (\ref{SO_MHDM}) with Neumann boundary conditions, we can numerically determine $w_k$ by running
	\begin{equation}\label{eq:GD_multiscale}
	\begin{cases}\frac{\partial w}{\partial t} =  \text{div}\left( \frac{\nabla w}{|\nabla w|}\right) - \lambda_k(1-f^\delta e^{-(w+y_{k-1})}) \ \text{ in } \Omega, \\
	\frac{\partial w}{\partial \vec{n}} = 0 \ \text{ in } \partial \Omega,
	\end{cases}
	\end{equation}
	to equilibrium, followed by updating $y_k = w_k + y_{k-1}$. Having recovered $y_k$, we subsequently obtain the reconstruction $x_k$ via an exponential transform. We omit the discretization for  SO MHDM, SO MHDM tight (Sec.~\ref{sec:SO_tight}) and SO MHDM refined (Sec.~\ref{SO_Refine}), as the transformed problems are now summed-MHDM procedures for which discretizations can be found in \cite{li_res_ves}. See Subsection \ref{sec:inits} for initializations of each scheme.
	
	Let \texttt{ShiOsher}($f, y_{k-1} ,\Delta t, \lambda_k, \epsilon,\texttt{maxIter}$) be the numerical solution to \eqref{eq:GD_multiscale} after running $n=\texttt{maxIter}$ times. Then the image restoration algorithm proceeds as follows:
	\begin{flushleft}
		\begin{framed}
			\textbf{Algorithm:} ShiOsher\\
			INPUT: noisy image $f^\delta = z\cdot \eta$, where $\eta$ is some multiplicative noise and $z$ is the original image. \\
			OUTPUT: $x_{\texttt{numScales}}$, an approximation to $z$. \\
			\begin{itemize}[nosep,after=\strut, leftmargin=1em, itemsep=3pt]
				\item Set $y_{-1}=0$, $\lambda_0=0.01$ and $\epsilon=0.01$ (or some small constant). 
				\item Choose \texttt{maxIter}.\\
				For $k=0,1,2,\dots, \texttt{numScales}$ do%
				\begin{itemize}
					\item Set: $w_k$ = ShiOsher($f^\delta , y_{k-1} ,\Delta t, \lambda_{k}, \epsilon,\texttt{maxIter})$%
					\item Update: $y_k = w_k + y_{k-1}$%
					\item Update: $\lambda_{k+1} = 2\lambda_{k}$%
				\end{itemize}
				\item Return: $x_{\texttt{numScales}} = e^{y_{\texttt{numScales}}}$%
			\end{itemize}
		\end{framed}
	\end{flushleft}%
	\textbf{ADMM for Shi-Osher MHDM:}
	In addition to the Euler-Lagrange approach, we consider the popular alternating direction method of multipliers (ADMM) for the convex optimization problem obtained by the Shi-Osher formulation. Recall, given $y_{k-1}$ and $\lambda_k$, we solve (\ref{SO_MHDM})
	to form the multiscale reconstruction $x_k=e^{y_k}$ with $y_k = \sum_{j=0}^k w_j$. We split the problem into subproblems, minimizing the data fidelity term $\lambda_k \int f^\delta e^{-(w+y_{k-1})} + (w+y_{k-1})$ and the regularizing term $TV(w)$ separately, subject to the condition these minimizers match. This gives the ADMM formulation
	\begin{align}
	\theta^{j+1} &= \arg\min_\theta \lambda_k \int \left( f^\delta e^{-(\theta+y_{k-1})} + (\theta+y_{k-1}) \right)+ \frac{\rho}{2}\| \theta - \psi^j  +  \vartheta^j\|^2_2 \label{eq:admm_primal},\\
	\psi^{j+1} &= \arg\min_{\psi} TV(\psi) + \frac{\rho}{2} \| \theta^{j+1} - \psi +  \vartheta^j\|^2_2 \label{eq:admm_psi},\\
	\vartheta^{j+1} &=  \vartheta^j + \theta^{j+1} - \psi^{j+1} \label{eq:admm_residual} , 
	\end{align}
	which proceeds iteratively in $j$, forming solution $w_k=\theta^\infty$. Here, $\rho$ is a constant parameter of the scheme. The stopping condition for ADMM is determined by some tolerance $\epsilon>0$ and is satisfied whenever 
	$$ \max\left\{\|\theta^{j+1}-\theta^j\|^2, \| \vartheta^{j+1}- \vartheta^j\|^2, \|\psi^{j+1} - \psi^j\|^2, \|\theta^{j+1} - \psi^{j+1}\|^2  \right\} < \epsilon. $$
	
	To perform the minimizations in \eqref{eq:admm_primal}, we use Newton's iteration. For \eqref{eq:admm_psi}, we use an exact total variation minimization routine \cite{Chambolle2009OnTV} provided at \url{http://www.cmap.polytechnique.fr/~antonin/software/}.

	\subsubsection{Shi-Osher Tight MHDM}\label{sec:SO_tight}
	\textbf{Discretization of Euler-Lagrange equations:}
	
	For the tight SO scheme, we consider the modified objective function 
	\begin{align}\label{eq:osher_tight}
	w_k = \arg\min_w \left\{TV(w) + \lambda_k a_k TV(w+y_{k-1}) + \lambda_k \int \left(f^\delta e^{-(w+y_{k-1})} + (w+y_{k-1}) \right)\right\}.
	\end{align}
	Notice that the only alteration from the standard Shi-Osher model is the additional $TV(w+y_{k-1})$ term, so the resulting Euler-Lagrange equations will be modified solely by this term. Additionally, the boundary condition will require $\vec{n}\cdot\nabla(w+y_{k-1})=0$. However, since $y_{k-1}=\sum_{j=0}^{k-1} w_j$ where $\vec{n}\cdot \nabla w_j=0 $ on the boundary, we need only to impose $\vec{n}\cdot \nabla{w} = 0$ on $\partial \Omega$.

	Consequently, the Euler-Lagrange equation for \eqref{eq:osher_tight} after considering  artificial time is
	\begin{align}\label{eq:osher_tight_GD}
	\begin{cases} \frac{\partial w}{\partial t} = \text{div}\left( \frac{\nabla w}{|\nabla w|}\right) + \lambda_ka_k\text{div}\left(\frac{\nabla(w+y_{k-1})}{|\nabla(w+y_{k-1})|} \right)- \lambda_k(1-fe^{-w-y_{k-1}})  \ \text{ in } \Omega, \\
	\frac{\partial w}{\partial \vec{n}} = 0 \ \text{ in } \partial \Omega.
	\end{cases}
	\end{align} 
	
	Compared with the Shi-Osher MHDM, the Shi-Osher Tight MHDM has the same initialization and parameters except $\lambda_{k+1} = 3\lambda_{k}$, with   $a_0=1$, $a_k = \frac{a_0}{(k+1)^{3/2}}$.

	\textbf{ADMM for Shi-Osher tight MHDM:}
	Given $y_{k-1}$, $\lambda_k$, $ a_k$, the ADMM  proceeds as follows: 
	\begin{align}
	\theta^{j+1} &= \arg\min_\theta \lambda_k \int \left( f^\delta e^{-(\theta+y_{k-1})} + (\theta+y_{k-1}) \right) + \frac{\rho}{2}\| \theta - \frac{\psi_1^j + \psi_2^j}{2} +  \vartheta^j\|^2_2 \label{eq:tight_admm_primal},\\
	\psi_1^{j+1} &= \arg\min_{\psi} \lambda_k a_k TV(\psi+y_{k-1}) + \frac{\rho}{2} \| \theta^{j+1} - \frac{\psi + \psi_2^j}{2} +  \vartheta^j\|^2_2 \label{eq:tight_admm_psi1} ,\\
	\psi_2^{j+1} &= \arg\min_{\psi} TV(\psi ) + \frac{\rho}{2} \| \theta^{j+1} - \frac{\psi^{j+1}_1 + \psi}{2} +  \vartheta^j\|^2_2 \label{eq:tight_admm_psi2}, \\
	\vartheta^{j+1} &=  \vartheta^j + \theta^{j+1} - \frac{ \psi_1^{j+1}+\psi_2^{j+1}}{2} \label{eq:tight_admm_residual}, 
	\end{align}
	iteratively forming the solution $w_k= \theta^{\infty}$. 
	We set the stopping criterion as before and approach \eqref{eq:tight_admm_primal} and  \eqref{eq:tight_admm_psi2} as above. For \eqref{eq:tight_admm_psi1} we use $g=\psi+y_{k-1}$ and rescale the problem. This gives
	\begin{align*}
	g^{j+1}=&\arg\min_g \frac{4 a_k\lambda_k}{\rho} TV(g) + \frac{1}{2}\| 2\theta^{j+1} +2  \vartheta^j + y_{k-1} - \psi_2^j - g\|^2\ ,
	\end{align*}
	and finally $\psi^{j+1}_1 = g^{j+1} - y_{k-1}$ can be recovered. 
	\subsubsection{Shi-Osher Refined MHDM} \label{SO_Refine}
	For the refined version, we consider
	\begin{align}\label{eq:osher_refined}
	w_k = \arg\min_w \left\{\|w\|_* + \lambda_k  a_k TV(w+y_{k-1}) + \lambda_k \int \left( f^\delta e^{-(w+y_{k-1})} + (w+y_{k-1}) \right)\right\},
	\end{align}
	where $\|w\|_*= \sup_{TV_\epsilon(\phi)\neq 0} \frac{\langle w, \phi\rangle}{TV_\epsilon(\phi)}$, and $TV_\epsilon(\phi) = \int_\Omega \sqrt{\epsilon^2 + |\nabla\phi|^2}$, with $\epsilon>0$. Thus,  the main modification is the $*$-norm term. Assuming $w$ is given, we can determine $\phi$ by studying the Euler-Lagrange equation associated with maximizing $ \langle w, \phi\rangle/TV_\epsilon(\phi)$ over $\phi$, as done in \cite{li_res_ves}. Likewise, given $\phi$, the Euler-Lagrange equation for the refined problem \eqref{eq:osher_refined} is modified from \eqref{eq:osher_tight_GD} only by the $*$-norm term.  Thus, we can find $w_k$ through the Euler-Lagrange equation for \eqref{eq:osher_refined}. More details of this procedure are provided in \cite{li_res_ves}. We solve for $\phi$ and $w$ by alternatingly time-stepping in each variable, using the same semi-implicit discretizations as done in the regular and tight formulations. Parameters are chosen as in the tight formulation,  while initializations can be found in Subsection \ref{sec:inits}.
	\subsection{AA based models}
	\subsubsection{AA MHDM}
	We extended the original AA model \eqref{eq:AA} to an MHDM method, as given in \eqref{eq:AA_multiscale}.  We use a semi-implicit method to solve the  Euler-Lagrange equation for \eqref{eq:AA} as formulated in \cite{auber_aujol_2008}. The discretization details are omitted here  due to similarity with the subsequent AA-log schemes. 
	\subsubsection{AA-log MHDM with $TV(\log(u))$ penalty term}
	The AA-log model directly addresses images corrupted by blur and multiplicative noise, since now with blur, the substitution $w=\log u$ no longer produces a convex problem as in the SO formulation.
	We seek a multiscale solution $x_k = \prod_{j=0}^k u_j$, where each $u_k$ satisfies \eqref{eq:AA_MHDM}.
	
	Using the Euler-Lagrange equation associated with \eqref{eq:AA_MHDM}, we obtain a time dependent PDE to determine $u_k$,
	\begin{align}\label{eq:AA_EL_GD}
	\begin{cases} 
	\frac{\partial u}{\partial t}= \text{div}\left( \frac{\nabla u}{|u||\nabla u|}\right) +\frac{|\nabla u| }{u|u|} - \lambda_k T^*\left( \frac{1}{T(ux_{k-1})} - \frac{f^\delta }{[T(ux_{k-1})]^2} \right)x_{k-1}, & \text{ on } \Omega\\
	\nabla u \cdot \vec{n} =0, \ &\text{ on } \partial \Omega.
	\end{cases}
	\end{align}
	A numerical scheme for \eqref{eq:AA_EL_GD} proceeds in much the same manner as for the SO models, which comprises of composing forward and backward finite difference operators for computing the divergence-of-gradient terms, and uses centered differences for first derivative terms. Then, we isolate the terms linear in $u^n_{ij}$ and exchange them for $u^{n+1}_{ij}$ to make a semi-implicit scheme:
	\begin{align}\label{eq:AA_scheme2}
	u^{n+1}_{ij} &= \frac{1}{1+\Delta td^{ij}_{u^n}}\cdot \bigg\{ u^n_{ij} -\Delta t\lambda_k T^*\left[\frac{1}{T(u^nx_{k-1})}-\frac{f^\delta}{[T(u^nx_{k-1})]^2}\right]_{ij}\cdot x_{k-1,ij}  \nonumber\\
	& + \Delta t \frac{\sqrt{D^0_x(u^n_{ij})^2 + D^0_y(u^n_{ij})^2}}{u^n_{ij}|u^n_{ij}|} + \Delta t\chi[d,i,j]^{u^n}_{u^n}\bigg\},
	\end{align} 
	where
	\begin{align}
	\chi[d,i,j]^{u^n}_{u^n} &:=d(u^n_{ij})u^n_{i+1,j} + d(u^n_{i-1,j})u^n_{i-1,j}+d(u^n_{ij})u^n_{i,j+1} + d(u^n_{i,j-1})u^n_{i,j-1},\label{eq:XI}\\
	d_{u^n}^{i,j} &:= 2d(u^n_{ij})+d(u^n_{i-1,j}) + d(u^n_{i,j-1}), \label{eq:d_u}\\
	d(u_{ij})&:= \frac{1}{|u_{ij}|\sqrt{\epsilon^2 + (D^+_x u_{ij})^2 + (D^+_yu_{ij})^2}}. \label{eq:d_of_u}
	\end{align}
	
	Here $D^{+}_x$ and $D^{+}_y$ are the standard forward differences, and $D^0_x$ and $D^0_y$ the centered differences in the first and second coordinates, respectively.

	To initialize, we set $x_{-1,ij}=1$ and choose $u^0$ according to Subsection \ref{sec:inits}. Denoting the solution of (\ref{eq:AA_scheme2}) after $n=\texttt{maxIter}$ iterations by $\texttt{AAlog\_blur}(f^\delta,x_{k-1},\Delta t, \lambda_k,  a_k, T, \epsilon, \texttt{maxIter})$, we present the basic workflow for using the AA-log model. 
	\begin{flushleft}
		\begin{framed}
			\textbf{Algorithm:} AAlog\_blur\\
			INPUT: noisy image $f^\delta = Tz\cdot \eta$, where $\eta$ is multiplicative noise and $T$ is a blurring kernel. \\
			OUTPUT: $x_{\texttt{numScales}}$, a multiscale approximation to $z$.
			\begin{itemize}[nosep,after=\strut, leftmargin=1em, itemsep=3pt]
				\item Set: $x_{-1}=1$, $\lambda_0=0.01$, and $\epsilon=0.01$ (or some small constant). 
				\item Choose: \texttt{maxIter}.\\
				For $k=0,1,2,\dots, \texttt{numScales}$ do
				\begin{itemize}
					\item Set $u_k$ = AAlog\_blur($f^\delta,x_{k-1},\Delta t, \lambda_k,  a_k, T, \epsilon, \texttt{maxIter}$)
					\item Update: $x_k = u_k\cdot x_{k-1}$
					\item Update: $\lambda_{k+1} = 2\lambda_{k}$. 
				\end{itemize}
				\item Return: $x_{\texttt{numScales}}$
			\end{itemize}
		\end{framed}
	\end{flushleft}
	\subsubsection{AA-log Tight MHDM}
	
	The tight version modifies the objective function by adding an additional regularizing term 
	\begin{equation} \label{eq:AA_tight}
	u_k\in\arg\min_u TV(\log(u)) +\lambda_k a_kTV(\log(ux_{k-1}))+ \lambda_k \int_\Omega \left( \log(T(ux_{k-1})) + \frac{f^\delta}{T(ux_{k-1})}\right).
	\end{equation}
	The Euler-Lagrange equation for \eqref{eq:AA_tight} gives the time dependent PDE \eqref{eq:AA__tight_GD} for $u_k$, which we can run to equilibrium:
	\begin{align}\label{eq:AA__tight_GD}
	\frac{\partial u}{\partial t} &= -\lambda_k T^*\left( \frac{1}{T(ux_{k-1})} - \frac{f^\delta }{[T(ux_{k-1})]^2} \right)x_{k-1}\\ 
	&+ \text{div}\left( \frac{\nabla u}{|u||\nabla u|}\right) +\frac{|\nabla u| }{u|u|}+\lambda_k a_kx_{k-1} \left[ \text{div} \left(\frac{\nabla(ux_{k-1})}{|ux_{k-1}||\nabla(ux_{k-1})|}\right) + \frac{|\nabla(ux_{k-1})|}{ux_{k-1}|ux_{k-1}|}\right],\nonumber\\
	\nabla u \cdot \vec{n} &=0, \ \text{ on } \partial \Omega. \nonumber
	\end{align}

	To discretize \eqref{eq:AA__tight_GD}, we introduce the notation $Z^n = u^nx_{k-1}$ and follow the same semi-implicit strategy from before (recalling (\ref{eq:XI}, \ref{eq:d_u}, \ref{eq:d_of_u})), giving
	\begin{align*}
	u^{n+1}_{ij} &= \frac{1}{1+\Delta td_{u^n} + \Delta t \lambda_k a_k x^2_{k-1,ij}d^{i,j}_{Z^N} }\cdot \Bigg\{ u^n_{ij} - \Delta t \lambda_k T^*\left[\frac{1}{T(u^nx_{k-1})}-\frac{f^\delta}{[T(u^nx_{k-1})]^2}\right]_{ij}\!\!\!\cdot x_{k-1,ij} \nonumber\\
	&+ \Delta t\left( \chi[d,i,j]_{u^n}^{u^n} + \frac{\sqrt{D^0_x(u^n_{ij})^2 + D^0_y(u^n_{ij})^2}}{u^n_{ij}|u^n_{ij}|} \right)\nonumber\\
	&+
	\Delta t\lambda_k a_kx_{k-1,ij}\left( \chi[d,i,j]^{Z^n}_{Z^n}+\frac{\sqrt{D^0_x(Z^n_{ij})^2 + D^0_y(Z^n_{ij})^2}}{Z^n_{ij}|Z^n_{ij}|} \right)\Bigg\}.
	\end{align*}
	
	The AA-log Tight MHDM has the same initialization and parameters as the AA-log MHDM, except $ a_0=1$, $ a_k = \frac{  a_0}{(k+1)^{3/2}}$, and $\lambda_{k+1} = 3\lambda_{k}$. 
	
	\subsubsection{AA-log Refined MHDM}
	The refined version reads as follows,
	\begin{equation} \label{eq:AA_refined}
	u_k\in \arg\min_u\|\log(u)\|_* +\lambda_k a_kTV(\log(ux_{k-1}))+ \lambda_k \int_\Omega \left( \log(T(ux_{k-1})) + \frac{f^\delta}{T(ux_{k-1})}\right).
	\end{equation}

	Thus, the Euler-Lagrange equation for \eqref{eq:AA_refined} is modified from the tight formulation only by the $\|\cdot \|_*$ term. Moreover,  the discretization is determined by the same process as for the AA-log tight method combined with alternating time-stepping with a test-function $\phi$, as done in the SO refined MHDM (compare also to the refined version of the summed-MHDM in \cite{li_res_ves}).
	We omit further details, due to the similarity to the previous schemes.
	
	\subsection{TNV-log Models}
	\subsubsection{TNV-log}
	The TNV-log model \eqref{eq:ARO_MHDM} is the TNV method modified with a $TV(\log(u))$ penalty.
	The resulting dynamical PDE from the Euler-Lagrange equations for \eqref{eq:ARO_MHDM} is 
	\begin{align}\label{eq:TNV_log_EL}
	\frac{\partial u}{\partial t} &= \text{div}\left(\frac{\nabla u}{|u||\nabla u|}\right) + \frac{|\nabla u|}{u|u|} - 2\lambda_k T^*\left[\frac{f^\delta}{[T(ux_{k-1})]^2} \left(1-\frac{f^\delta}{T(ux_{k-1})}\right) \right] x_{k-1}\ &\text{in } \Omega,\\
	\nabla u\cdot \vec{n} & =0 \ &\text{in } \partial \Omega.\nonumber
	\end{align}
	
	This, differing from the AA-log MHDM by only its fidelity term, takes on a similar discretization and algorithm, with updates for $u^{n+1}$ given by
	\begin{align}\label{eq:ARO_scheme}
	u^{n+1}_{ij} &= \frac{1}{1+\Delta td^{ij}_{u^n}}\cdot \bigg\{ u^n_{ij} -2\Delta t\lambda_k T^*\left[\frac{f^\delta}{[T(u^nx_{k-1})]^2} \left(1-\frac{f^\delta}{T(u^nx_{k-1})}\right) \right]_{ij}\cdot x_{k-1,ij} + \nonumber\\
	& \quad\quad \Delta t \frac{\sqrt{D^0_x(u^n_{ij})^2 + D^0_y(u^n_{ij})^2}}{u^n_{ij}|u^n_{ij}|} + \Delta t\chi[d,i,j]^{u^n}_{u^n}\bigg\}.\nonumber
	\end{align}
	\subsubsection{TNV-log Tight} 
	Likewise, with the TNV-log tight formulation the Euler-Lagrange equation is the same as that for the AA-log tight MHDM scheme \eqref{eq:AA__tight_GD}, except with the appropriate fidelity term swapped for TNV, as was done in \eqref{eq:TNV_log_EL}. The discretization is akin to that of the AA-log tight MHDM, and we omit further details.

	\subsection{Initializations}\label{sec:inits}
	We take a moment to discuss initializing our MHDM schemes. Each of the proposed iterative schemes minimizes an energy as shown in (\ref{multiplic}), starting with some initialization $u^0$ which, hopefully, is close to the minimizer. In this work, we propose two types of choices with the following motivation. 
	
	\textbf{Fidelity minimizing initializations:}
	
	One approach is to choose $u^0$ to minimize the fidelity $H(\fd, Tu^0x_{k-1})$ without regard to the penalty term, which leads to non-constant initializations. For the SO MHDM schemes, if we aim to minimize $\int \left(f^\delta e^{-(w^0+y_{k-1})} + (w^0+y_{k-1}) \right) $, then the optimal initialization is $w^0 = \log(\fd) - y_{k-1}$. 
	For the AA MHDM, AA-log MHDM and TNV-log schemes, the optimal initialization $u^0$ minimizing $H(\fd,Tux_{k-1})$ is $u$ such that $Tux_{k-1} = \fd$, which amounts to solving a deblurring problem. For simplicity, we will just use $u^0= \fd/x_{k-1}$.
	
	\textbf{Penalty minimizing initializations:}
	
	On the other hand, we can choose  $u^0$ to minimize the penalty $J(\cdot)$. For $J(\cdot) = TV(\log(\cdot))$ or $TV(\cdot)$, these would be constant functions whose values we can optimally choose to minimize the remaining data fidelity $ H(\fd,Tux_{k-1})$. For the tight and refined schemes, the additional terms $ a_k\lambda_k TV(\log(u^0 x_{k-1}))$ and $\|\log(u^0)\|_*$ do not necessarily vanish as was the case before. In the tight schemes, however, this does not affect the choice of a constant $u^0$ since $TV(\log(u^0x_{k-1})) = TV(\log(u^0)+\log(x_{k-1})) = TV(\log(x_{k-1}))$. However, $\|\log(u^0)\|_*$ is finite only if $\int_\Omega \log(u^0) = 0$ (see Sec. 3.1.4, Lemma 1 in \cite{vese2016variational}), which would force $u^0=1$ for a constant initialization, with no choice on optimizing the fidelity further. 
	
	We let  $w^0 = \log\left(\frac{1}{|\Omega|}\int\fd e^{-y_{k-1}}\right)$ for the SO MHDM regular and tight schemes, and  $w^0 = 0$ for the refined one.
	In the AA MHDM and AA-log MHDM regular and tight schemes, we take  $u^0 = \frac{1}{|\Omega|} \int {\fd}/{Tx_{k-1}}$, while for the refined scheme we need $u^0 = 1.$ Finally,  we choose $u^0 = \frac{\|\fd/Tx_{k-1}\|^2_{L^2(\Omega)}}{\int \fd/Tx_{k-1}}$ for the TNV-log schemes.
	
	In practice, we use the \emph{penalty minimizing} initializations for the regular and tight SO MHDM, AA MHDM, AA-log MHDM and TNV-log schemes, while the \emph{fidelity minimizing} initialization are considered for the refined SO MHDM and AA-log schemes. In the case of images with blur, we find that the \emph{penalty minimizing} initialization $u^0 = 1$ works marginally better for the refined AA-log MHDM recovery. 
	
	\captionsetup[subfigure]{font = scriptsize}
	\captionsetup{font = small, labelfont={bf}}
	\section{Numerical Results}\label{sec:numerical_results}
	In this section we examine the numerical results of multiscale image restoration in the case of multiplicative noise, using the regular, tight and refined schemes for the SO MHDM, AA MHDM, AA-log MHDM and TNV-log models. We also compare these against the TNV multiscale method  \cite{tad_nez_ves2} and the DZ model \cite{dong_2013}.
	\begin{figure}[h]
		\centering
		\begin{subfigure}{0.24\textwidth}
			\includegraphics[width=\linewidth]{./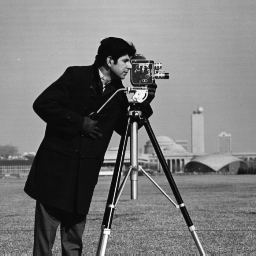}
			\caption{} \label{fig:cameraman}
		\end{subfigure}%
		\hspace*{\fill} 
		\begin{subfigure}{0.24\textwidth}
			\includegraphics[width=\linewidth]{./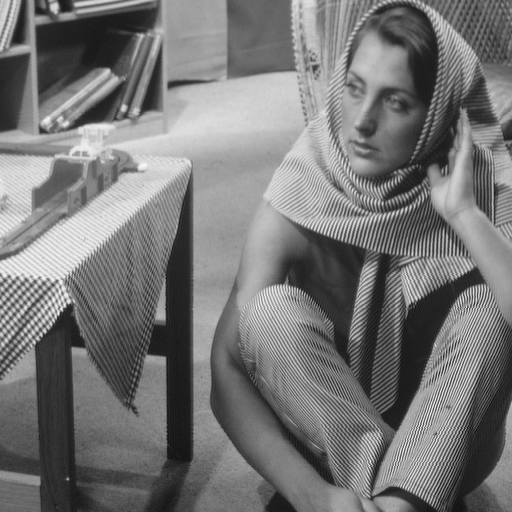}
			\caption{} \label{fig:barbara}
		\end{subfigure}%
		\hspace*{\fill}   
		\begin{subfigure}{0.24\textwidth}
			\includegraphics[width=\linewidth]{./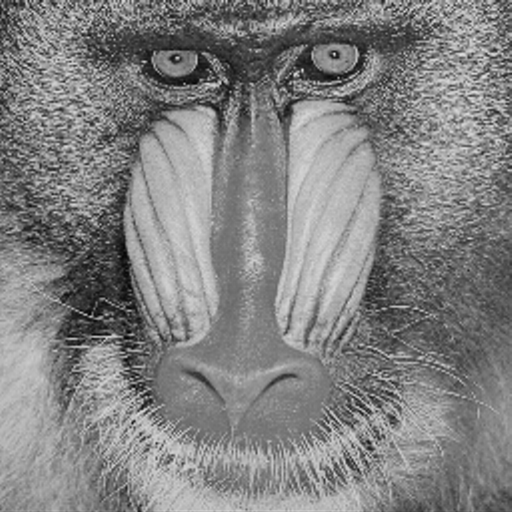}
			\caption{} \label{fig:mandril}
		\end{subfigure}%
		\hspace*{\fill}   
		\begin{subfigure}{0.24\textwidth}
			\includegraphics[width=\linewidth]{./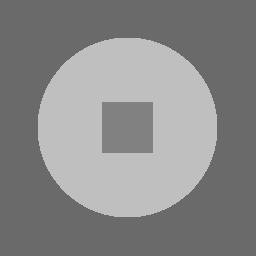}
			\caption{} \label{fig:disc_square}
		\end{subfigure}%
		\caption{Original images: (a) ``Cameraman", (b) ```Barbara", (c) ``Mandril", (d) ``Geometry".} \label{fig:original}
	\end{figure}%
	\captionsetup[subfigure]{labelformat=empty}
	\begin{figure}
		\centering
		\begin{subfigure}{0.24\textwidth}
			\includegraphics[width=\linewidth]{./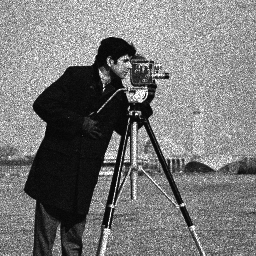}
			\caption{RMSE=27.00, SNR=13.98} \label{fig:cameraman_noisy}
		\end{subfigure}%
		\hspace*{\fill} 
		\begin{subfigure}{0.24\textwidth}
			\includegraphics[width=\linewidth]{./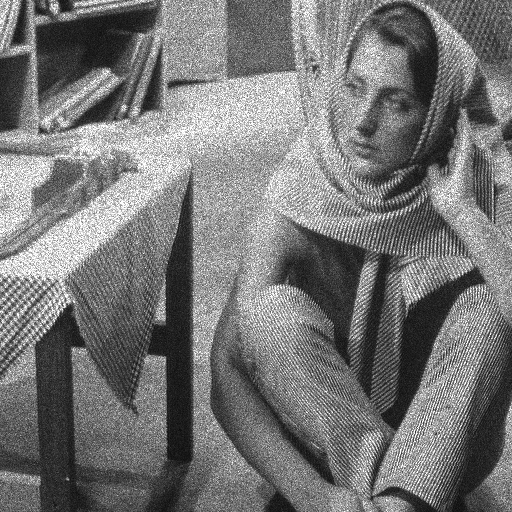}
			\subcaption{RMSE=26.05, SNR=13.99}\label{fig:barbara_noisy}
		\end{subfigure}%
		\hspace*{\fill}   
		\begin{subfigure}{0.24\textwidth}
			\includegraphics[width=\linewidth]{./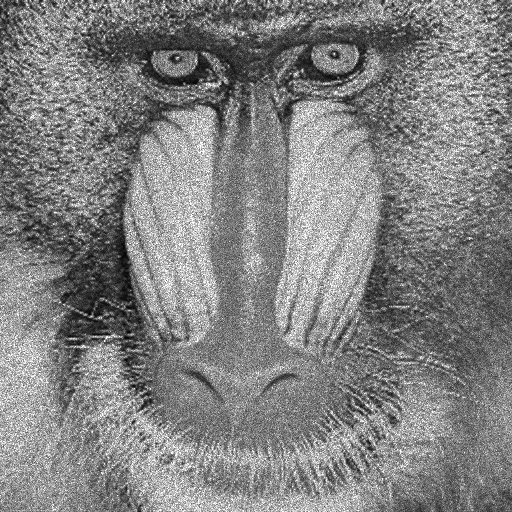}
			\caption{RMSE=27.12, SNR=13.97} \label{fig:mandril_noisy}
		\end{subfigure}%
		\hspace*{\fill}   
		\begin{subfigure}{0.24\textwidth}
			\includegraphics[width=\linewidth]{./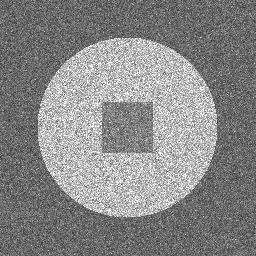}
			\caption{RMSE=28.34, SNR=12.96} \label{fig:disc_square_noisy}
		\end{subfigure}%
		\caption{Images with multiplicative gamma noise.} \label{fig:noisy}
	\end{figure}%
	We chose three natural grayscale images with edges and textures ranging from smooth to detailed, and one synthetic test image as shown in Fig.\,\ref{fig:original}. The models proposed in this work are aimed at removing multiplicative gamma noise and (possibly) blurring. Accordingly, we degrade the test images with gamma noise 
	$$ g(x;a) = \frac{a^a}{\Gamma(a)} x^{a-1} e^{-ax} \mathbbm{1}_{x\geq 0}\,,$$ with shape parameter $a=25$ (mean 1 and standard deviation $1/\sqrt{a}=0.2$), as shown in Fig.~\ref{fig:noisy}. Our choice of $a$ serves to compare with the original Aubert and Aujol paper \cite{auber_aujol_2008}. While the standard deviation for the Gamma noise used in \cite{auber_aujol_2008} is not given, this noise profile produces signal-to-noise ratios near the sample images used therein. Similar or lower noise levels are also used in \cite{ullah2017new}. Comprehensive comparisons are made at this noise level, but we also include some high and severe noise cases in Figures \ref{fig:AA_high_noise} and \ref{fig:mid_noise_compare}  which demonstrate the MHDM's ability to handle more aggressive corruption. 
	Restorations are evaluated on the root-mean-squared-error (RMSE) and signal-to-noise ratio (SNR) between recovered $x_{k}$ and original images $z$:  
	\begin{equation*}
	RMSE=\displaystyle\frac{\|x_{k}-z\|}{\sqrt{N}},~~
	SNR=10\times\log_{10}\left(\frac{\|z\|^2}{\|x_{k}-z\|^2}\right),
	\end{equation*}
	where $\|\cdot\|$ is the Euclidean norm and $N$ is the total number of pixels in the image. 
	We also examine how many multiscales are required for reconstruction, as well as  the effectiveness of the stopping criteria. 
	
	The MHDM recoveries in the following sections are all performed with the same parameter values (as much as the models allow). We choose $\lambda_0 = 0.01$, then  $\lambda_k=\lambda_0q^k$ with $q=2$ for the regular and $q=3$ for the tight/refined schemes, as they satisfy the convergence prerequisites given in Section \ref{conv_MHDM} and \ref{sec:extensions}.  For the tight and refined formulations, there is the additional parameter $ a_k=\frac{ a_0}{(1+k)^{3/2}}$ with $ a_0=1$. If blurring is considered, we use a $5\times 5$ Gaussian kernel $T$ with variance 2. The parameter $\lambda_k$ serves as the weight of the fidelity term, and larger $\lambda_k$ leads to more textural details in $u_k$. In general,  to avoid restoring noises in the image during the first few ranks of the hierarchy, a relatively small $\lambda_0$ should be chosen. As $\lambda_k$ gradually increases, the texture within the image is restored at finer and finer scales. The multiscale hierarchical nature of the methods also increases the robustness of the recoveries; that is to say, it dilutes the influence of parameter changes on the image restoration. For example, if we choose a smaller $\lambda_0$, an acceptable restoration can be obtained with more hierarchy ranks (larger $k$). The time step size $\Delta t=0.01$, gradient regularization $\epsilon=0.01$, and the maximum number of iterations \texttt{maxIters}=1000. Using SO MHDM (EL) as an example, we give Fig.\,\ref{fig:energy_k} to briefly mention the numerical convergence of gradient descent across multiscale indices $k$ for \texttt{maxIters}=1000, where $E_k(w^n)$ is given by the energy to be minimized in \eqref{SO_MHDM} evaluated at the $n$-th iteration $w^n$. Lower $k$ values may take more than $1000$ steps to fully converge (c.f. $k=6$ in Fig.\,\ref{fig:energy_k}), however, in aggregate across all $k$, convergence is achieved. Increasing \texttt{maxIters} or implementing a relative-energy-change exit condition has little impact on RMSE or SNR values at $k_{min}$, as shown in Table\,\ref{fig:energy_k}. We also observe similar behavior for the AA-log MHDM methods. Overall, the schemes exhibit robust nature due to the hierarchical construction of multiscale methods.
	
	\begin{figure}[h!]
		\centering
		\includegraphics[width=0.5\linewidth]{./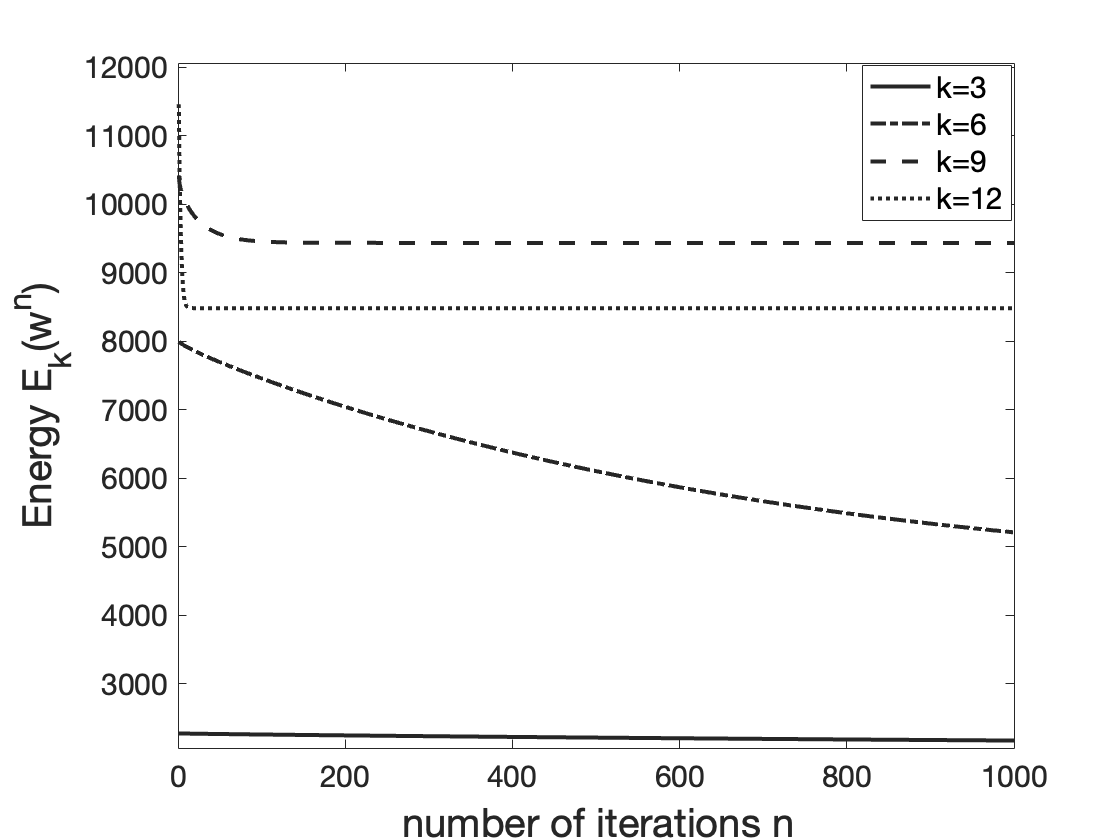}
		\caption{SO MHDM (cameraman image) energy versus iteration number during gradient descent, with multiscale numbers $k=3, 6, 9, 12$.}\label{fig:energy_k}
	\end{figure}%
	\begin{table}[h]
		\centering
		\caption{{$k_{min}$, RMSE and SNR values of SO MHDM (cameraman image) with different stopping criteria for gradient descent.}}
		\begin{tabular}{c|ccc}
			Stopping criteria & $k_{min}$ & RMSE at $k_{min}$  & SNR at $k_{min}$ \\\hline
			\texttt{maxIters}=100 & 9& 12.0223 & 21.0058 \\
			\texttt{maxIters}=1000 & 9 & 10.8598 &21.8892\\
			\texttt{maxIters}=10000 & 9 & 10.9102  & 21.8489 \\\hline
			\rule{0pt}{4ex}
			$\displaystyle\frac{|E_k(w^{n+1})-E_k(w^n)|}{E_k(w^{n+1})}<10^{-8}$& 9 & 10.9159 & 21.8444 
		\end{tabular}%
		\label{tab:Compare_maxiter}%
	\end{table}%
	
	\subsection{Shi-Osher models}
	Here and in the subsequent sections, to differentiate between the Euler-Lagrange PDE discretization and the ADMM recoveries, we will append (EL) or (ADMM) to the appropriate schemes, such as tight SO MHDM (ADMM) and refined SO MHDM (EL).
	
	The purpose of the MHDM recovery is to retain textures of the original images at different scales while eliminating noise. As an example of a typical recovery, we show the progression of the multiscales for the ``Cameraman" image in Fig.\,\ref{fig:cameraman_multiscales}. The importance of correctly selecting a stopping point is clear, since too many multiscales will recover the majority of the noise as finer and finer levels of texture are added back into the image, while too few leave the image without textural details. This can be seen visually in Fig.\,\ref{fig:cameraman_multiscales} and numerically in Fig.\,\ref{fig:combined_metrics}. The individual multiplicative scales are given in Fig.\,\ref{fig:cameraman_uk} and demonstrate how the images are built up. Recall each $x_k$ in Fig.\,\ref{fig:cameraman_multiscales} is the product $\prod_{j=0}^k u_j$, with each $u_j$ contributing features at different scales. Based on the multiplicative construction, light regions within $u_j$ (higher pixel values) are promoted, while darker ones (lower pixel values) are suppressed relative to middle-toned regions in the image. As one can see in  Fig.\,\ref{fig:cameraman_uk},  the multiplicative process segments the large scale ``cartoon" features at early scales before separating the texture and eventually noise within the image. Moreover, the proposed stopping index $k^*(\delta)=9$ aligns with a sensible contribution from $u_9$ which adds back sufficient detail within the cameraman's clothing and tripod before increasing the fine details---and noise---in the grass and background with $u_{j},\, j>9$. For a user with definite sense of the scale of features in the true data, the $u_j$ pieces could provide a visual method of choosing $k^*$, whereby one increases $k$ until $u_k$ is emphasizing details at the preferred size.
	
	Throughout the following discussion, we will look at both the multiscale index $k_{min}$ which minimizes RMSE and the proposed stopping index $k^*=k^*(\delta)$ defined in \eqref{discrepancy_index}, \eqref{discrepancy_tight}. 
	
	Recall that the noise parameter $\delta$ satisfies $H(f^\delta,Tz)\leq \delta^2$. In practice, we take $\delta^2 = H(f^\delta,Tz)$ for our numerical experiments. 
	
	In the case of SO MHDM, we transform the summed-MHDM reconstruction $y_k$ back to the image approximation $x_k = e^{y_k}$, and then we  compute $k^*$ accordingly. The stopping criterion is determined by choosing $k^*$ to be the penultimate multiscale to $H(f^\delta,x_{k})/H(f^\delta,z)$ dropping below the value 1, as shown in Fig.\,\ref{fig:combined_stopping} for the SO MHDM regular, tight and refined recoveries.%
	\captionsetup[subfigure]{labelformat=simple}
	\renewcommand\thesubfigure{(\alph{subfigure})}
	\begin{figure}[h!]
		\centering
		\includegraphics[width=0.75\linewidth]{./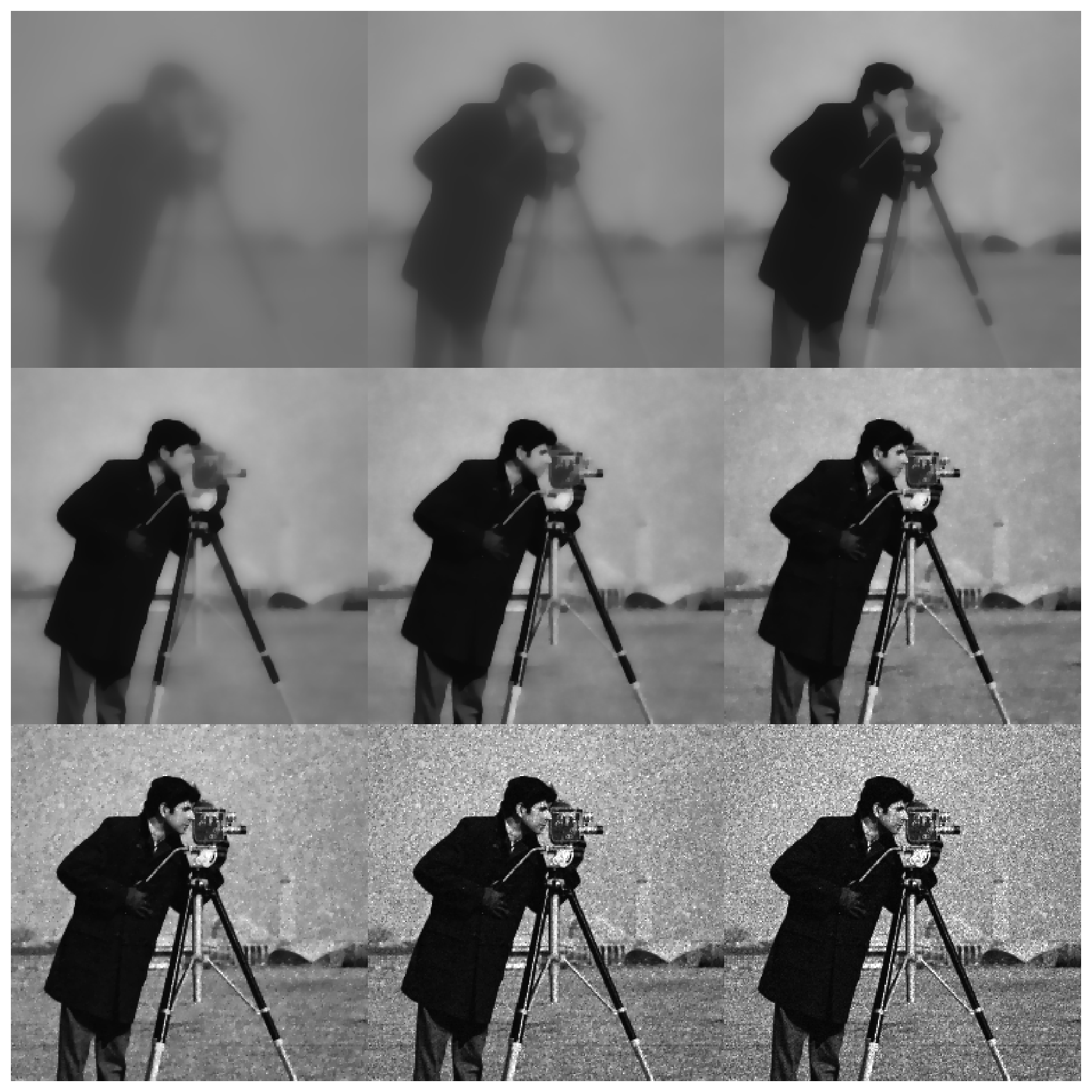}
		\caption{Multiscales $x_k$ for $k=4,5,\dots,12$ in the SO MHDM recovery of the cameraman image. In this example, our proposed stopping index $k^*(\delta)=9=k_{min}$ is optimal (Refer to the diamond-labeled curves in Fig.\,\ref{fig:metrics_SO_MHDM_combined}).} \label{fig:cameraman_multiscales}
	\end{figure}%
	\begin{figure}[h!]
		\centering
		\includegraphics[width=0.6\linewidth]{./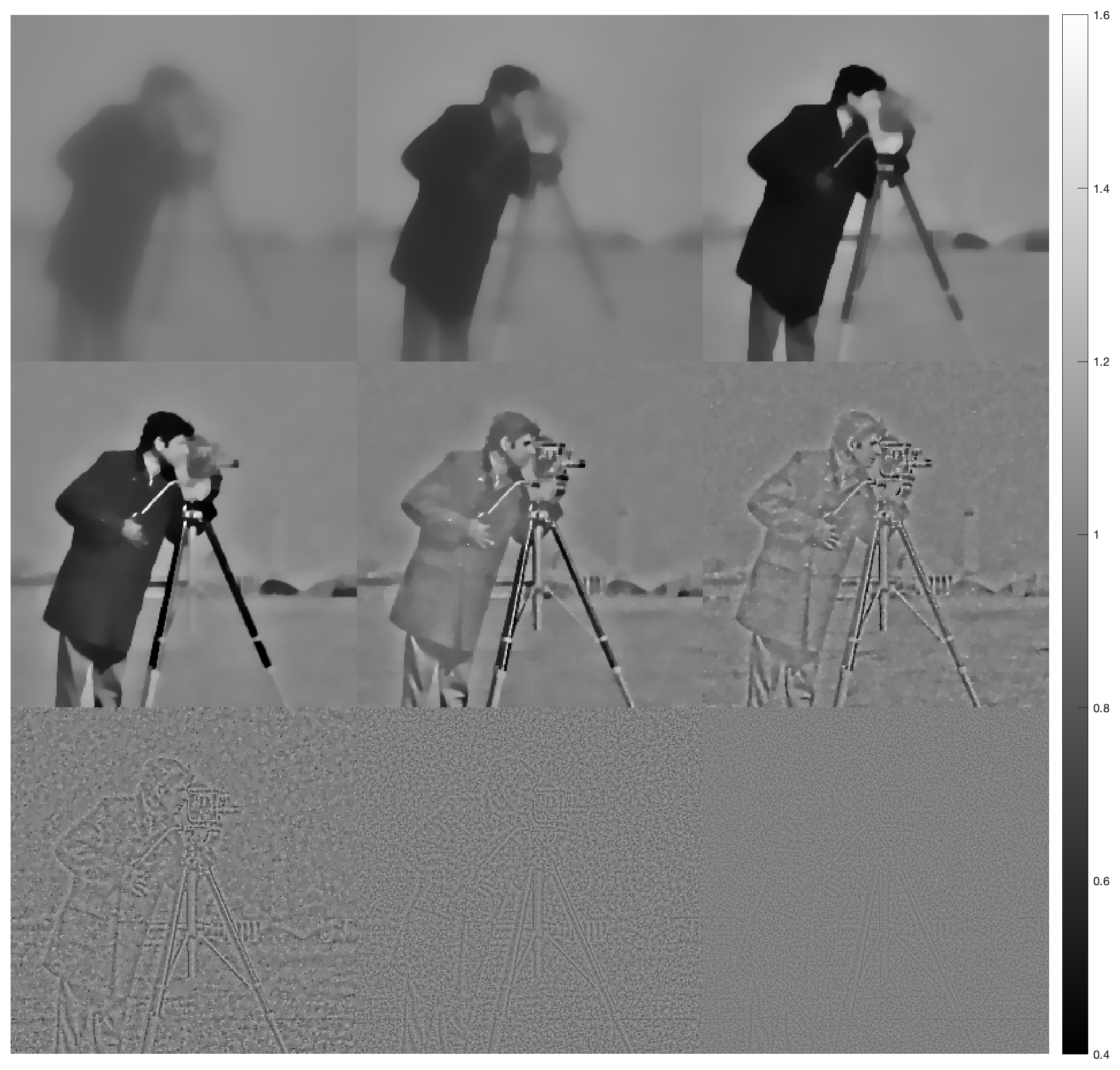}
		\caption{ Multiplicative scales $u_j=e^{y_j}$ from the SO MHDM recovery of the cameraman image. The product $\prod_{j=0}^k u_j=:x_k$ constructs the multiscales shown in Fig.\,\ref{fig:cameraman_multiscales}. These are displayed on the interval $[0.4,1.6]$ for increased contrast---true range $[0.22,1.63]$---and shown with a common legend.} 
		\label{fig:cameraman_uk}
	\end{figure}%
	\begin{figure}[h!]
		\centering
		\begin{subfigure}{0.60\textwidth}
			\includegraphics[width=0.99\linewidth]{./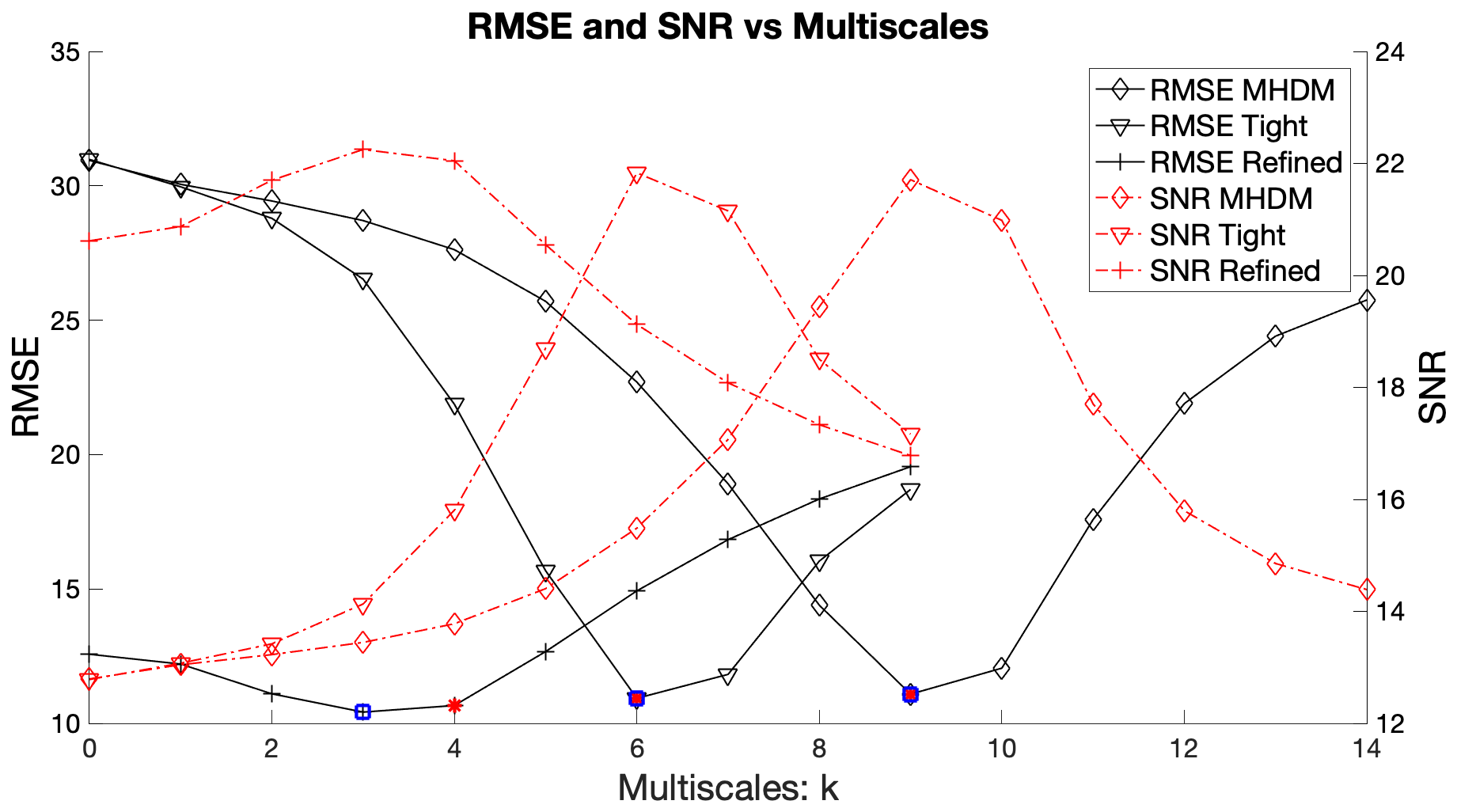}
			\caption{}\label{fig:combined_metrics}
		\end{subfigure}
		\begin{subfigure}{0.35\textwidth}
			\includegraphics[width=0.99\linewidth]{./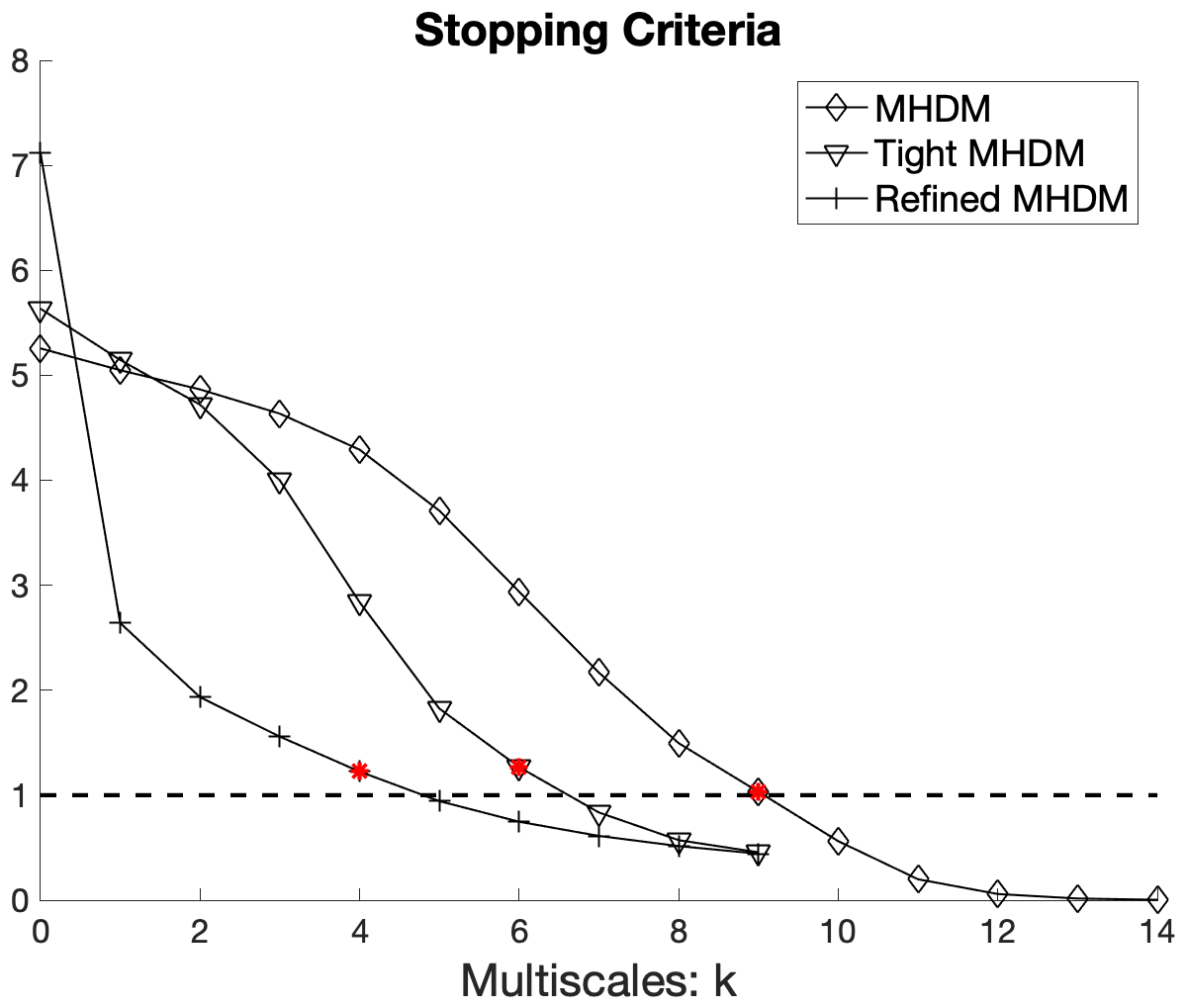}
			\caption{}\label{fig:combined_stopping}
		\end{subfigure}%
		\caption{(a) RMSE and SNR versus multiscales index across methods when restoring the ``cameraman" image. The optimal multiscale index $k_{min}$ for each method is shown as the blue square and the stopping criteria $k^*$ are given by red asterisks. (b) The stopping criteria $k^*$ (shown as red asterisks) are the maximal $k$ before $H(f^\delta,x_{k})/H(f^\delta,z)\geq \tau>1$ (for SO MHDM) or $(H(f^\delta,x_{k})+ a_k\lambda_kJ(x_k))/H(f^\delta,z)\geq \tau>1$ (for tight and refined SO MHDM) is no longer satisfied, as indicated by crossing under the horizontal dotted line in (b).} \label{fig:metrics_SO_MHDM_combined}
	\end{figure}%
	\captionsetup[subfigure]{labelformat=empty}
	\begin{figure}[h!]
		\centering
		\begin{subfigure}{0.24\textwidth}
			\includegraphics[width=\linewidth]{./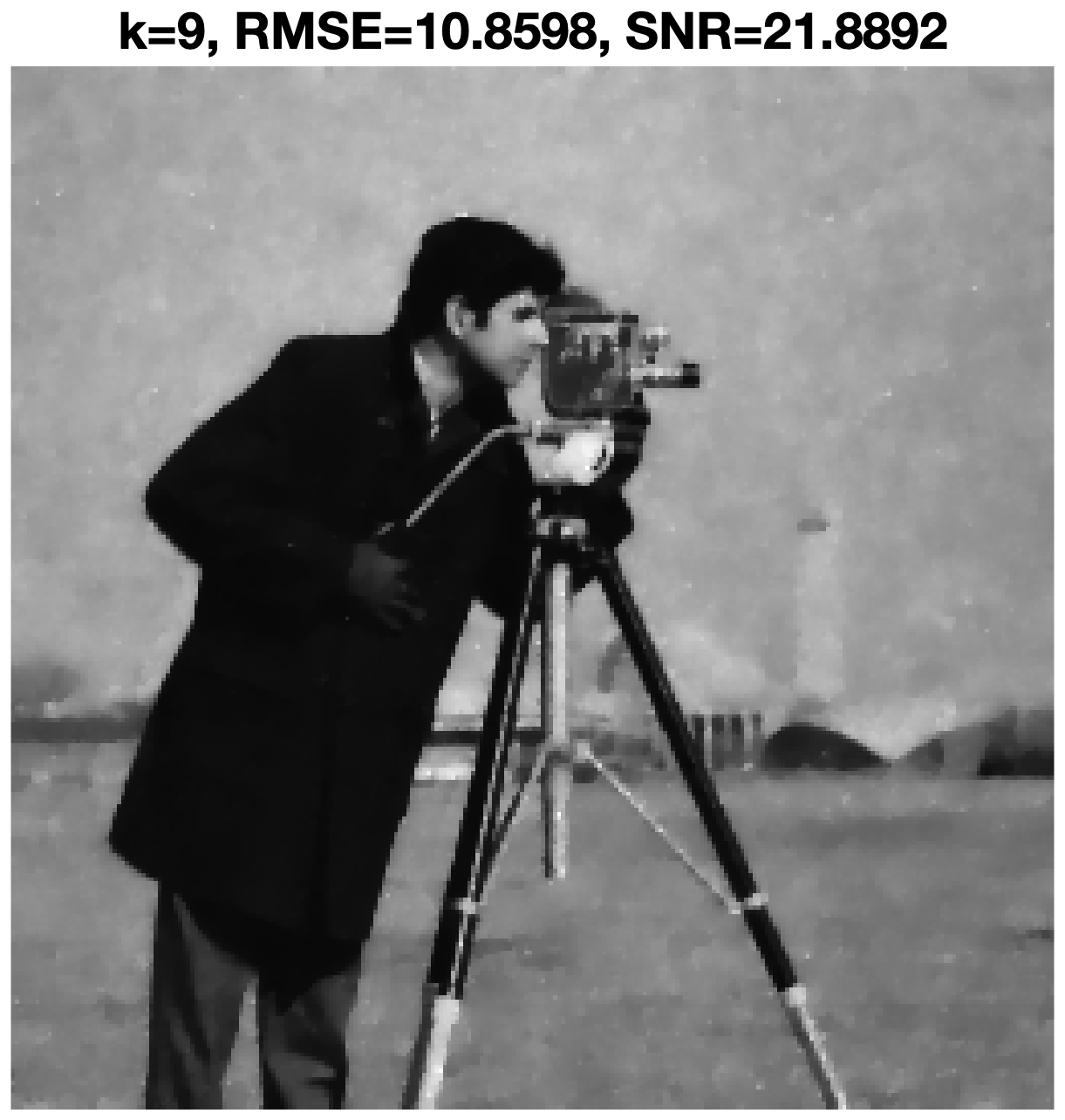}
		\end{subfigure}%
		\hspace*{\fill}
		\begin{subfigure}{0.24\textwidth}
			\includegraphics[width=\linewidth]{./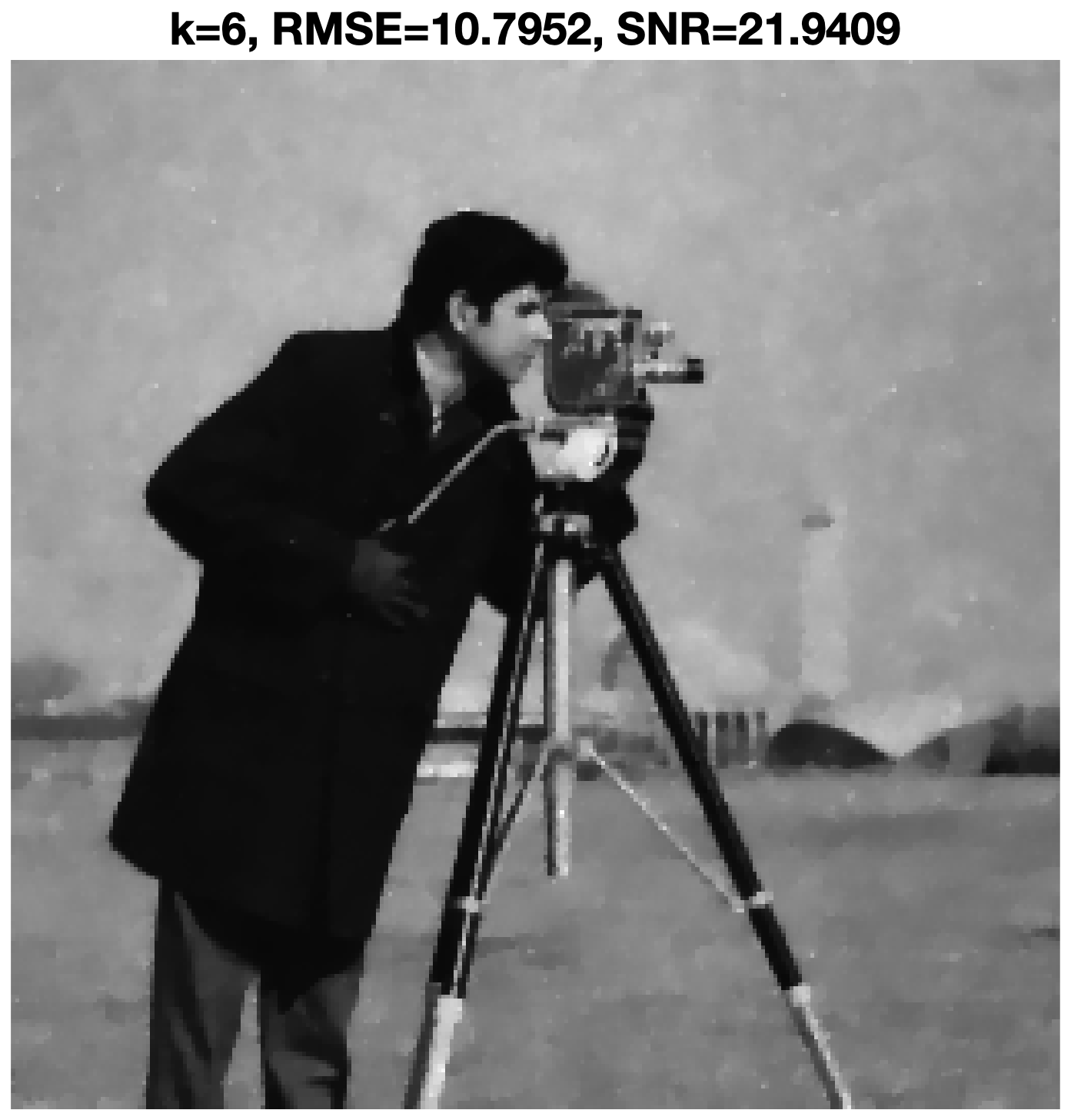}
		\end{subfigure}%
		\hspace*{\fill}
		\begin{subfigure}{0.24\textwidth}
			\includegraphics[width=\linewidth]{./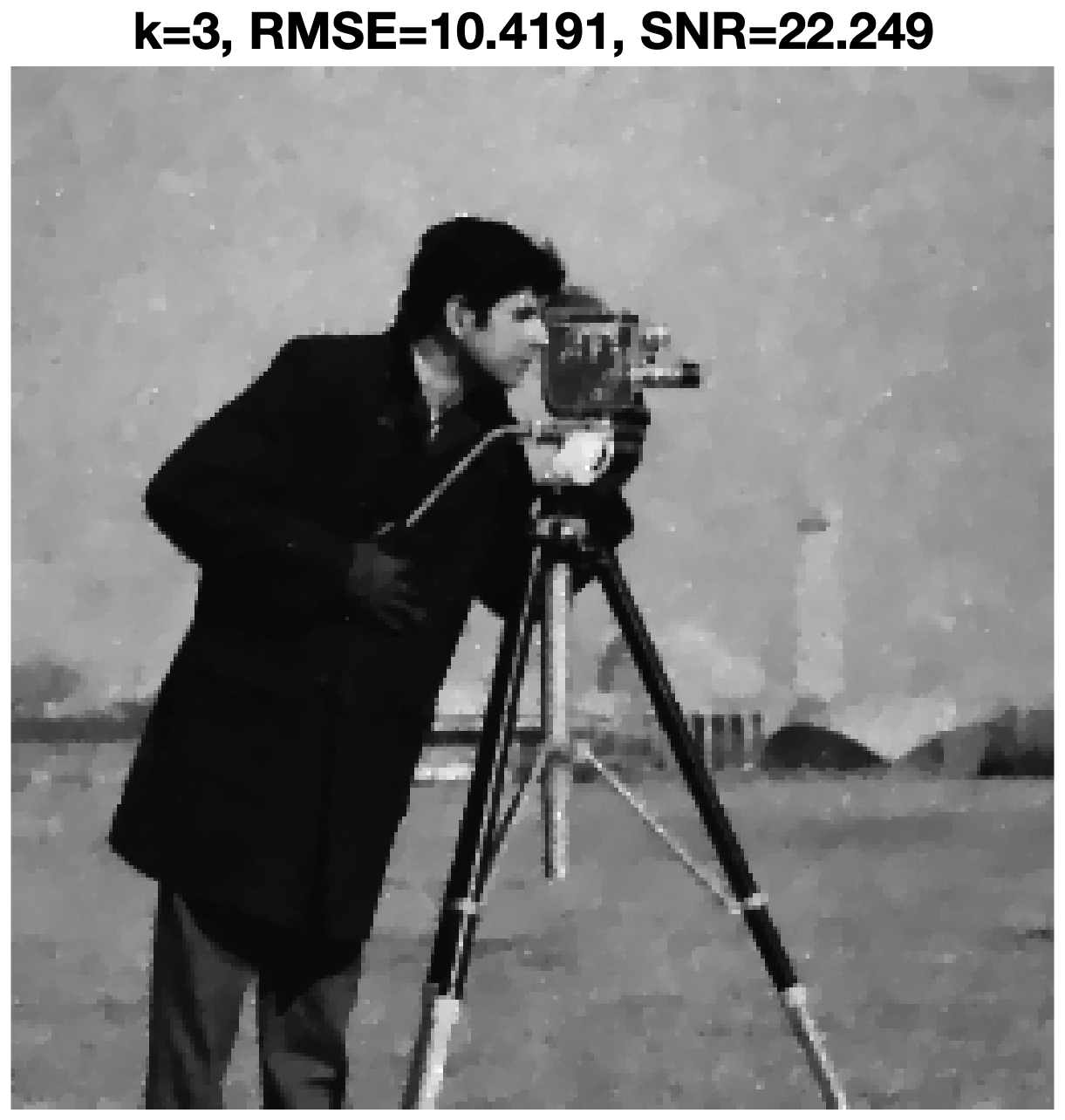}
		\end{subfigure}%
		\hspace*{\fill}
		\begin{subfigure}{0.24\textwidth}
			\includegraphics[width=\linewidth]{./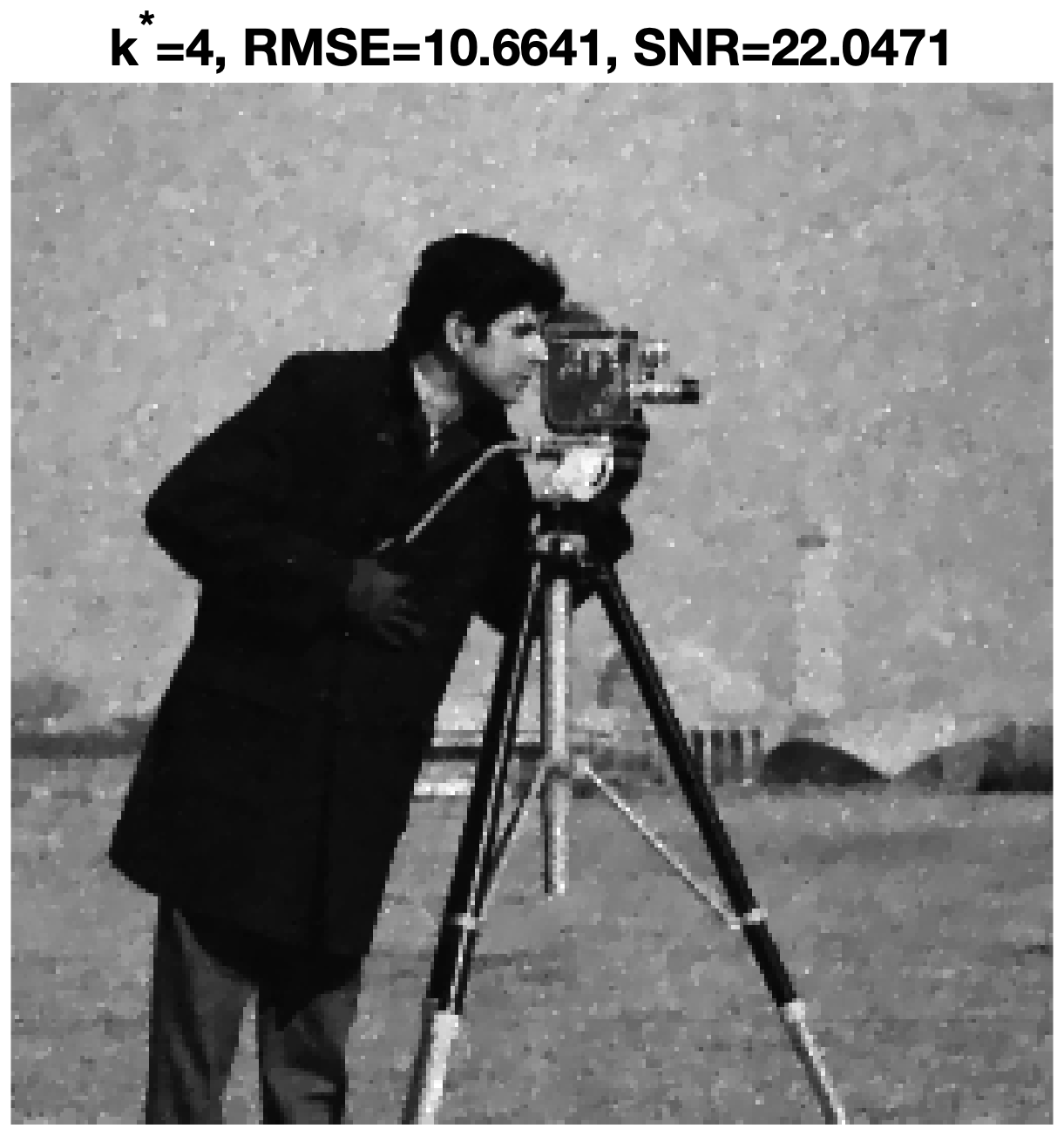}
		\end{subfigure}%
		\caption{SO MHDM (EL) image recoveries. From left to right: regular, tight, and refined restorations at $k_{min}$, and finally, the refined restoration at $k^*$. Recall that $k^*=k_{min}$ for regular and tight recoveries.}
		\label{fig:SOEL_cameraman}
	\end{figure}%
	In the ``Cameraman" restoration, one has $k^*=k_{min}$ for regular and tight SO MHDM, while $k^*(\delta)=k_{min}+1$ for the refined version. We emphasize that the tight and refined schemes require fewer multiscales to recover the images, and that Fig.~\ref{fig:combined_metrics} demonstrates the importance of the stopping criteria at preventing excess noise from being recovered. The SO MHDM (EL) recoveries are shown in Fig.\,\ref{fig:SOEL_cameraman}, where we observe slight improvements transitioning from the regular to tight and finally to refined methods. 
	%
	%
	\captionsetup[subfigure]{labelformat=empty}
	\begin{figure}
		\centering
		\begin{subfigure}{0.24\textwidth}
			\includegraphics[width=\linewidth]{./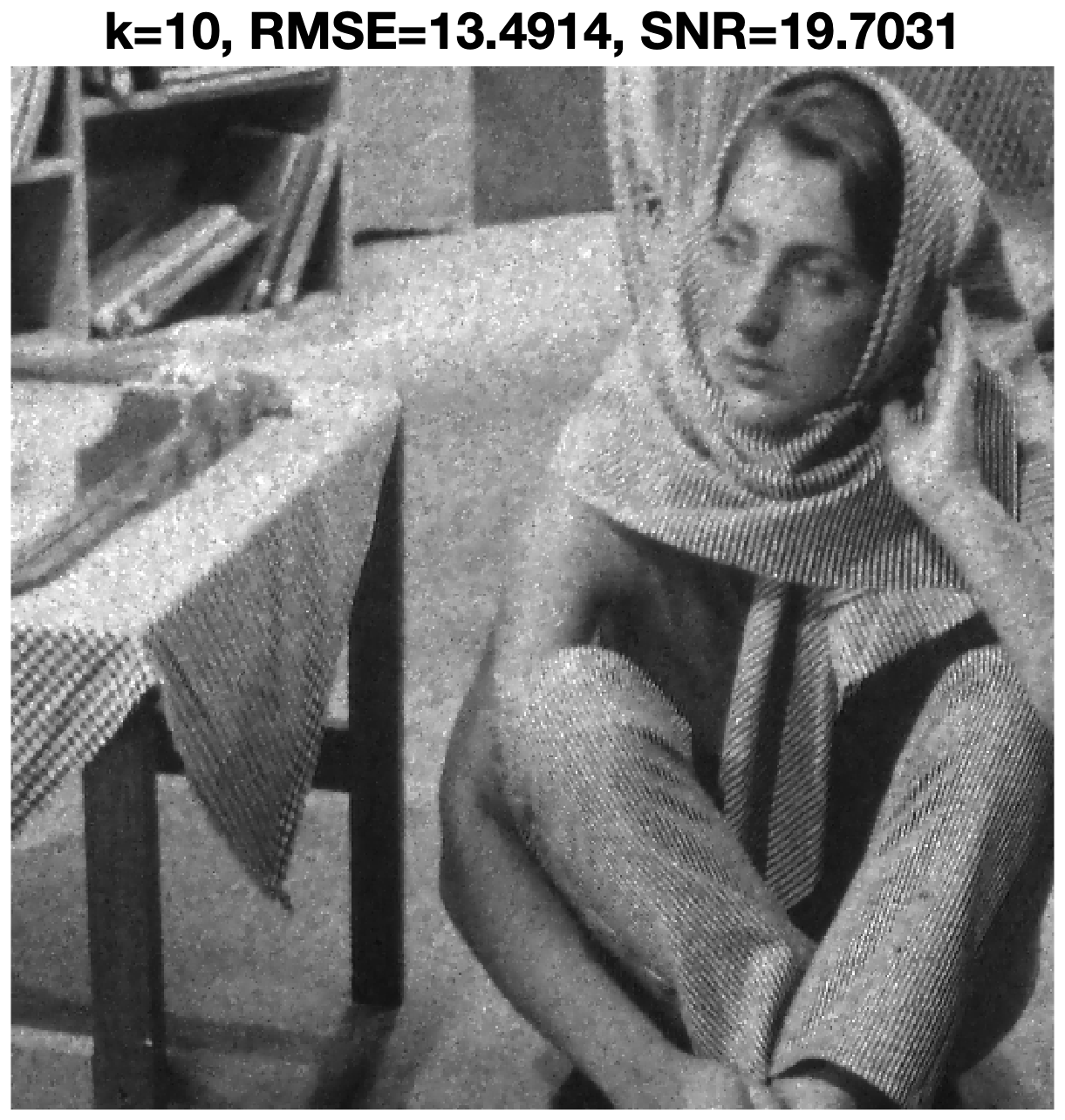}
		\end{subfigure}\hspace{2.3mm}%
		\begin{subfigure}{0.24\textwidth}
			\includegraphics[width=\linewidth]{./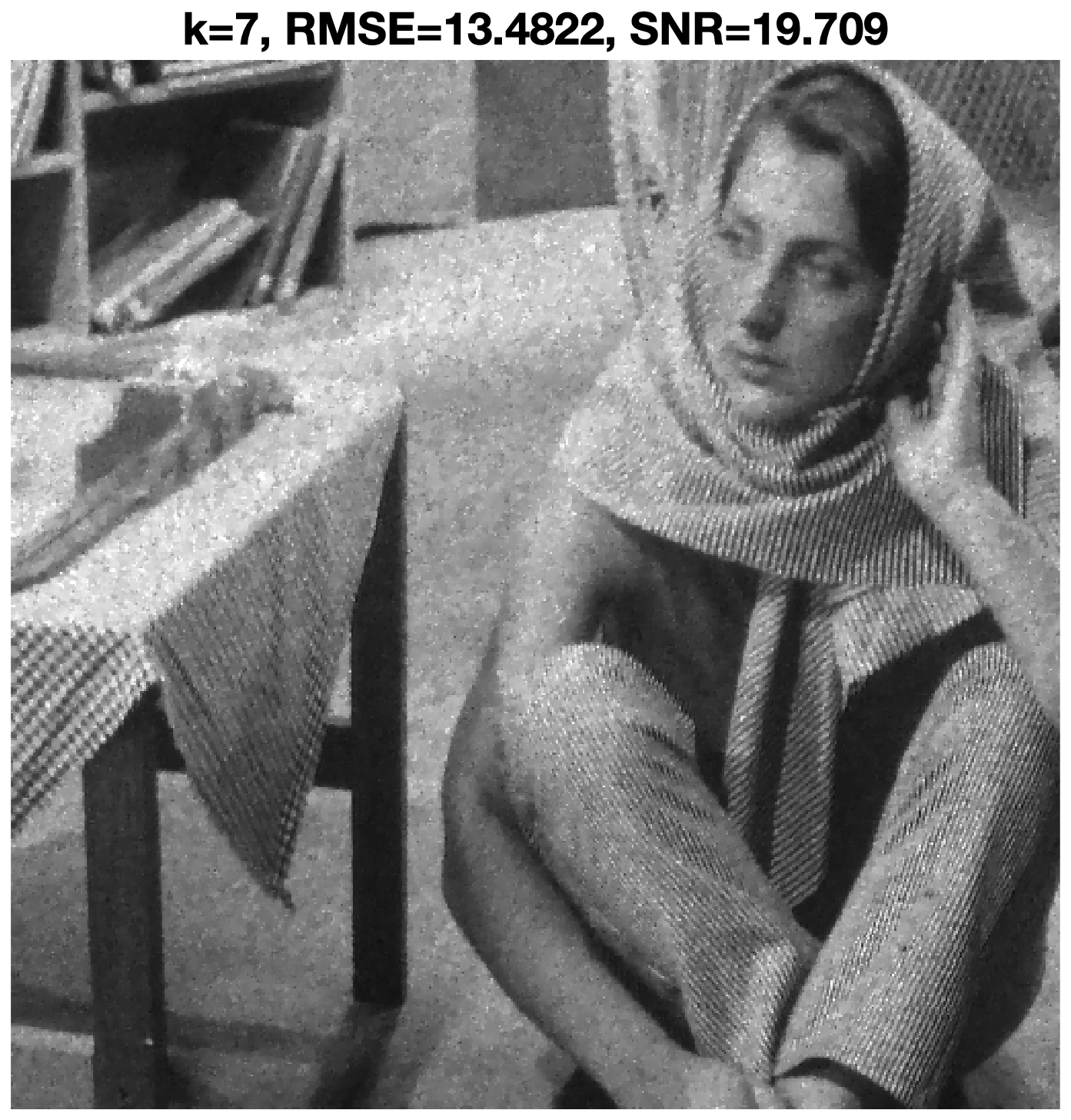}
		\end{subfigure}\hspace{2.3mm}%
		\begin{subfigure}{0.24\textwidth}
			\includegraphics[width=\linewidth]{./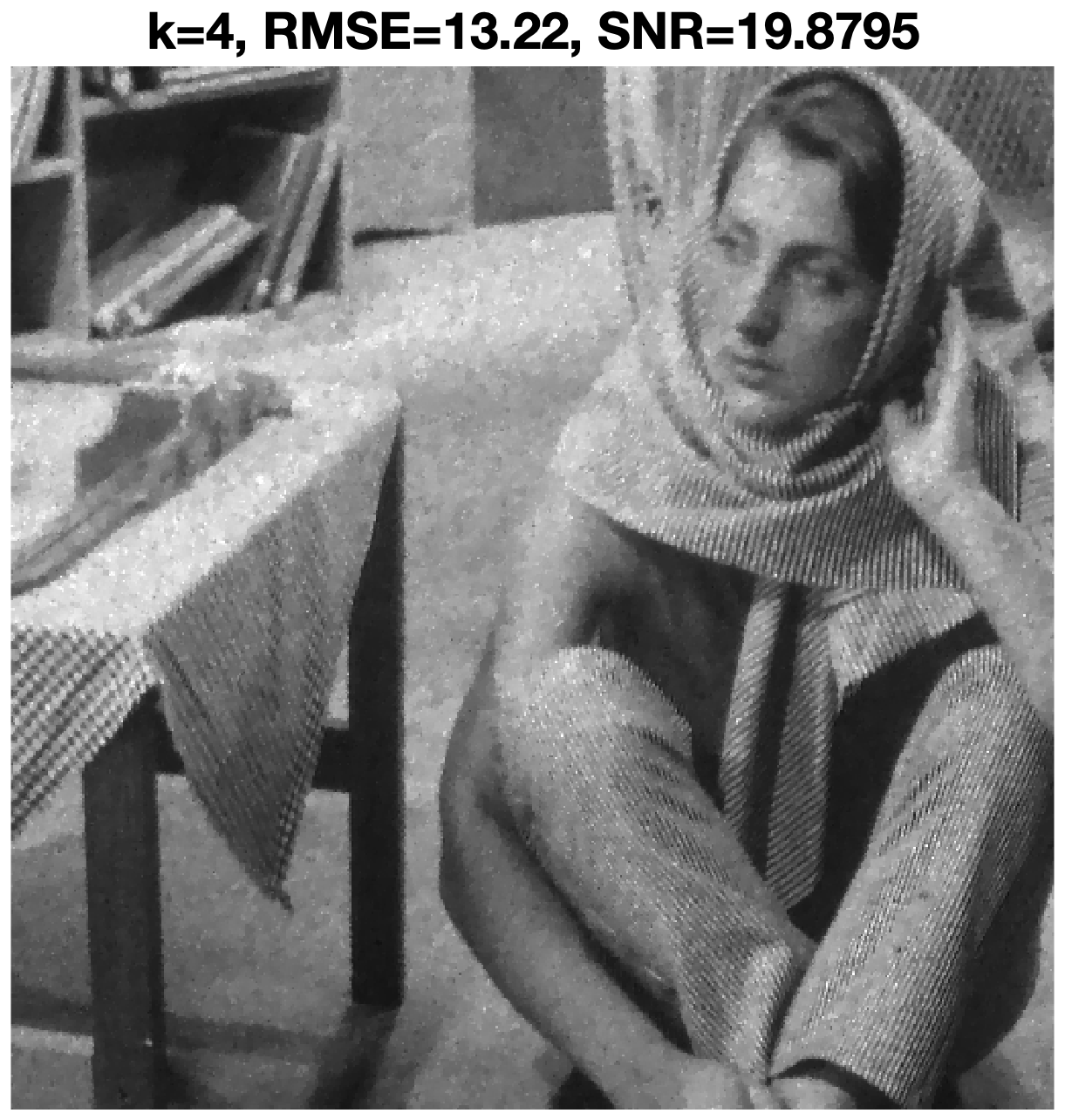}
		\end{subfigure}\\%
		\begin{subfigure}{0.24\textwidth}
			\includegraphics[width=\linewidth]{./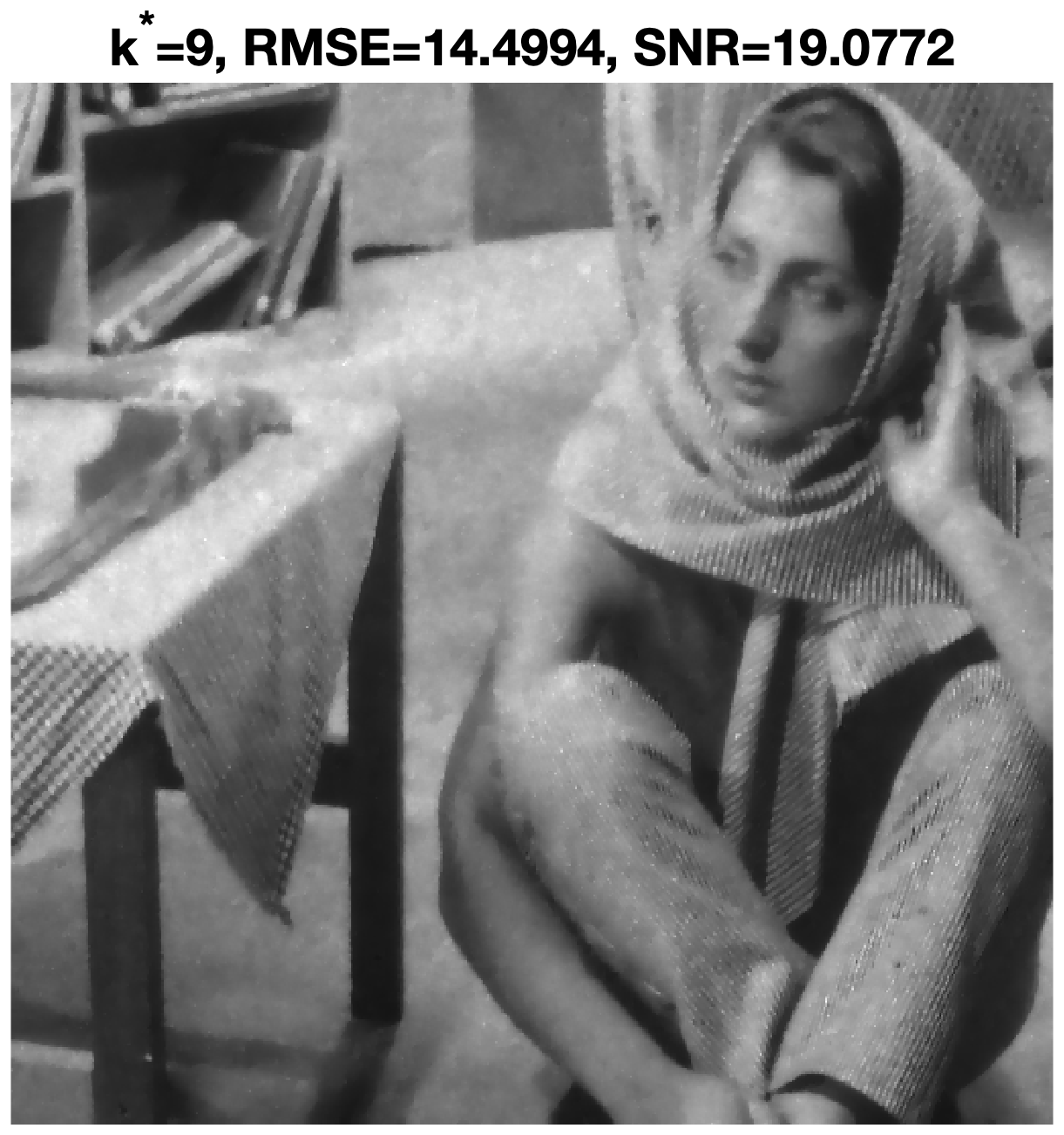}
		\end{subfigure}\hspace{2.3mm}%
		\begin{subfigure}{0.24\textwidth}
			\includegraphics[width=\linewidth]{./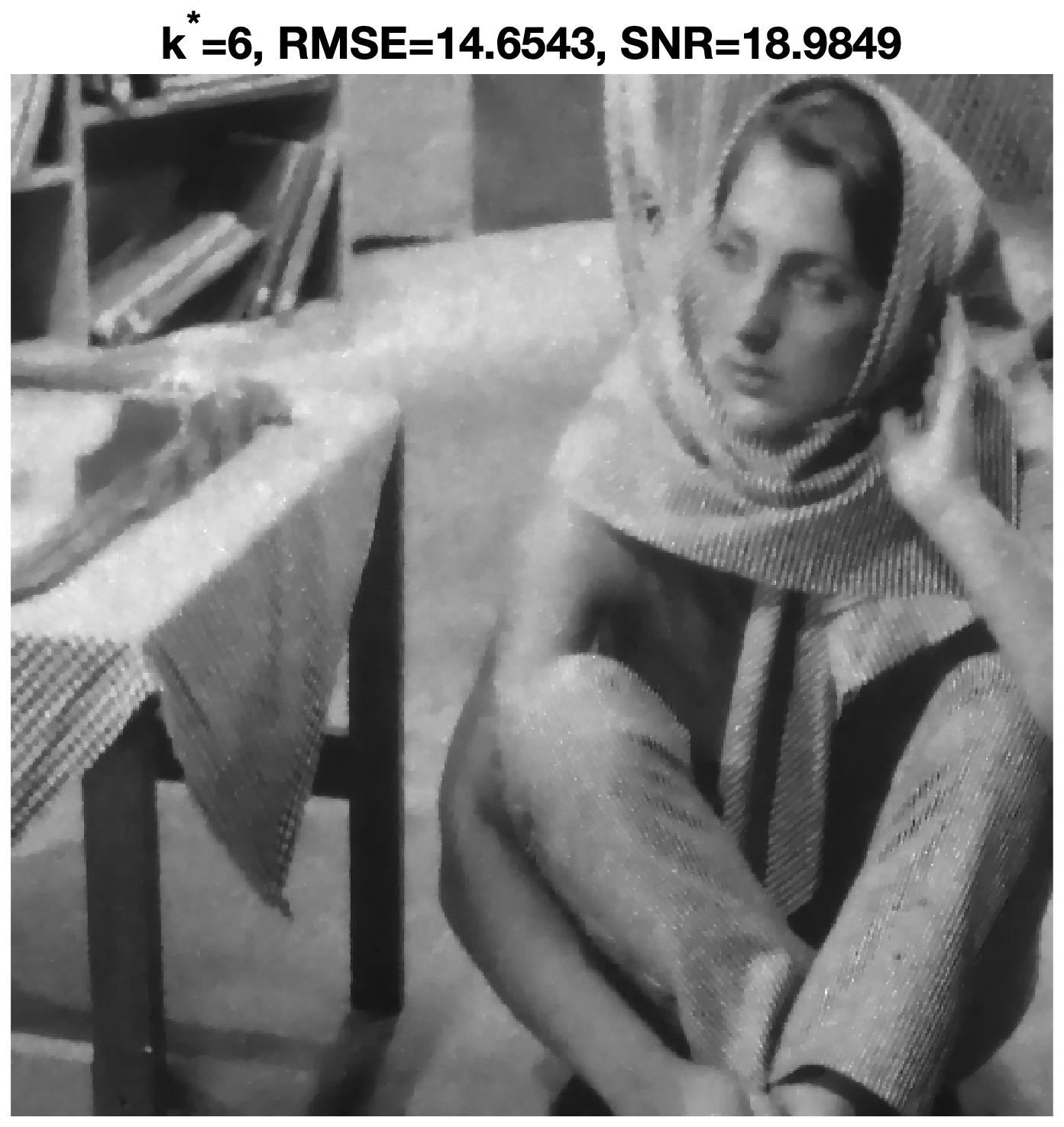}
		\end{subfigure}\hspace{2.3mm}%
		\begin{subfigure}{0.24\textwidth}
			\includegraphics[width=\linewidth]{./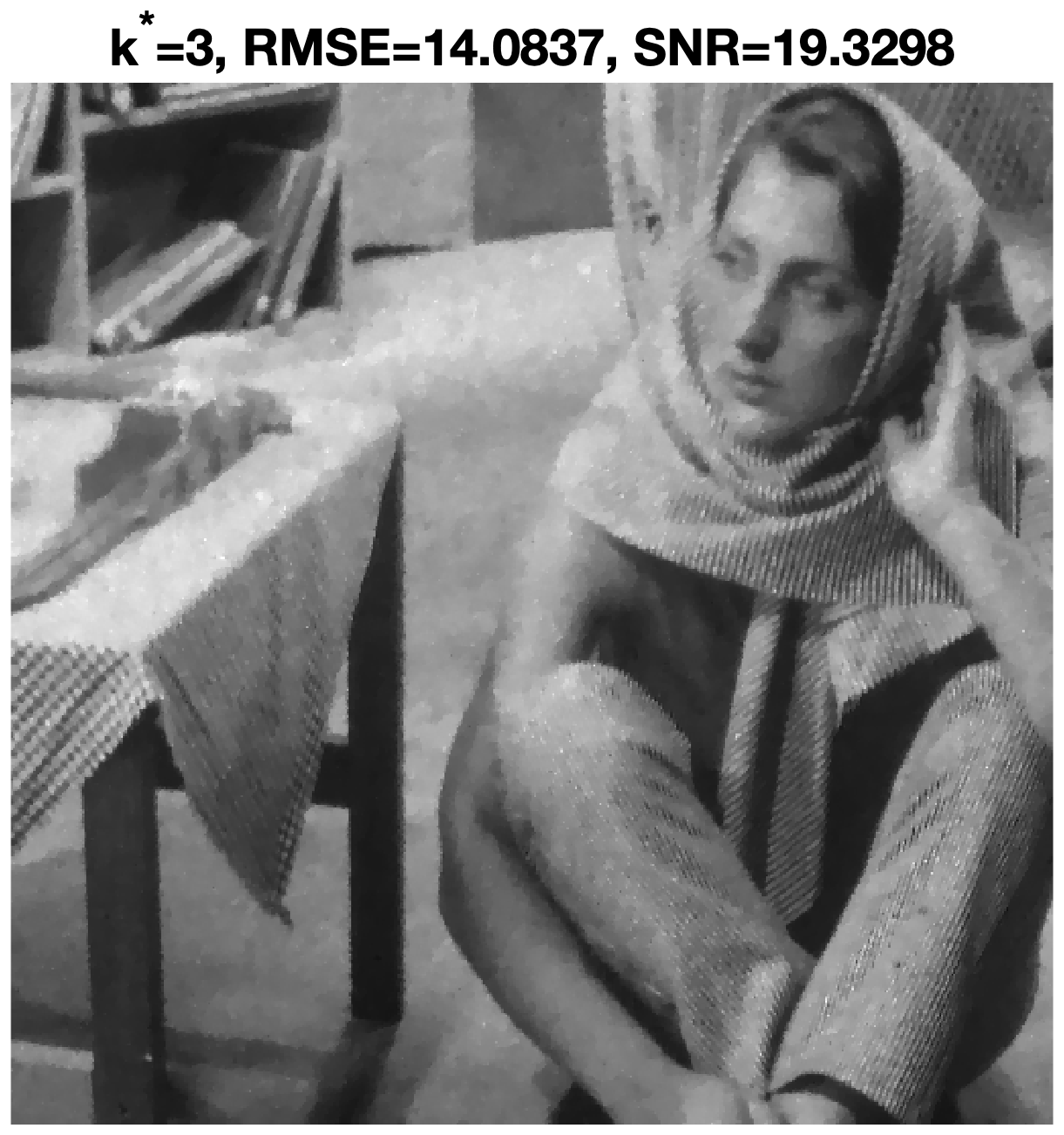}
		\end{subfigure}%
		\caption{From left to right: the regular, tight and refined SO MHDM (EL) recoveries. Row one is the $k_{min}$ restoration while row two is the $k^*$ recovery (if $k^*\neq k_{min}$).}
		\label{fig:restored_osher}
	\end{figure}%
	Fig.\,\ref{fig:restored_osher} exhibits a more textured image at the $k_{min}$ and $k^*(\delta)$ scales. Again, the number of multiscales required in the SO MHDM (EL) reconstructions decreases going from regular to tight and then to refined SO MHDM in a uniform manner, consistent with Fig.\,\ref{fig:metrics_SO_MHDM_combined}. We also see increasingly improved restorations when moving from regular to refined schemes. Notably, with this more textured image the stopping index $k^*(\delta)$---which is generally within one step of $k_{min}$---produces restorations which are visually very close to the optimal ones. 
	
	However, the difference between $k_{min}$ and $k^*(\delta)$ can significantly affect the restoration in some cases---see ``Geometry" in second column of Fig.\,\ref{fig:SO_EL_ADMM_restored}, which compares recoveries obtained from the SO MHDM ADMM and EL approaches. For this low texture image, the SO MHDM (ADMM) restorations are significantly better than the EL derived counterparts. We also take a moment to mention that the SO MHDM recoveries in Fig.\,\ref{fig:SO_EL_ADMM_restored} preserve the mean image intensity. For models that make use of the logarithm to transform multiplicative noise into additive noise, there is a downward shift in the mean intensity of the recoveries, as explained in \cite{auber_aujol_2008}. Our multiscale method effectively eliminates this mean intensity shift.
	\begin{figure}
		\begin{subfigure}{0.24\textwidth}
			\includegraphics[width=\linewidth]{./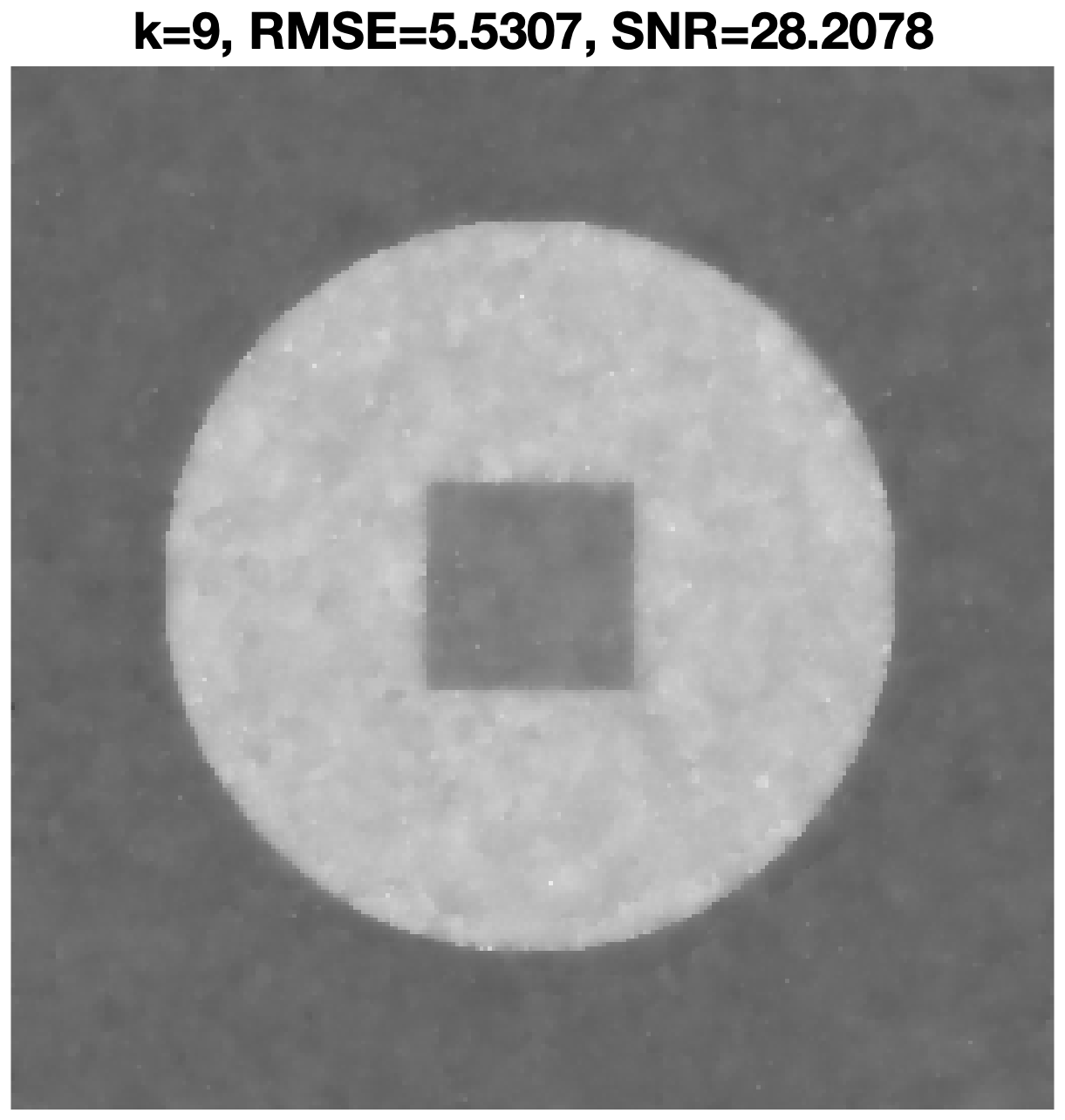}
		\end{subfigure}%
		\begin{subfigure}{0.24\textwidth}
			\includegraphics[width=\linewidth]{./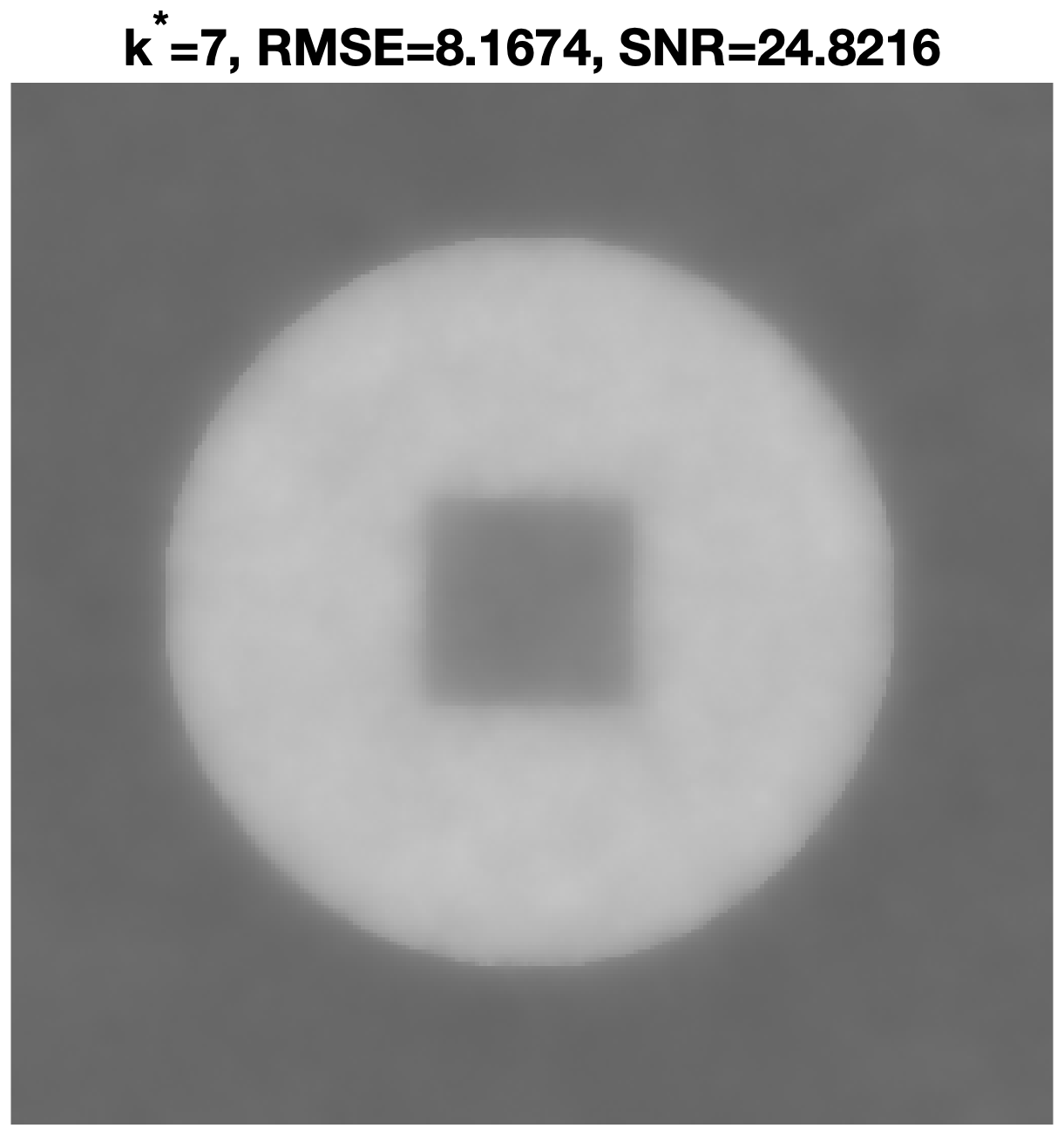}
		\end{subfigure}%
		\hspace*{\fill}   
		\begin{subfigure}{0.24\textwidth}
			\includegraphics[width=\linewidth]{./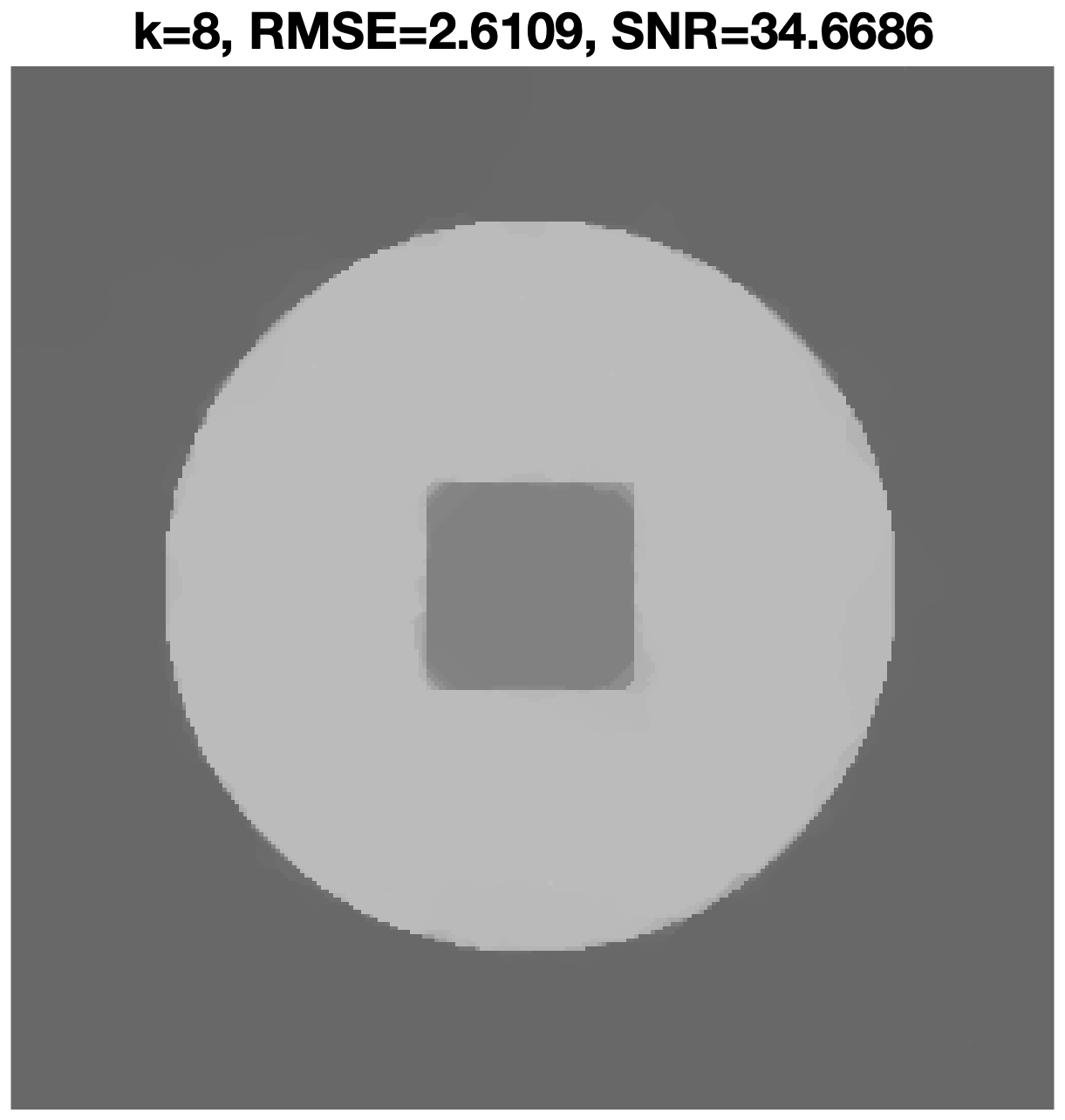}
		\end{subfigure}%
		\begin{subfigure}{0.24\textwidth}
			\includegraphics[width=\linewidth]{./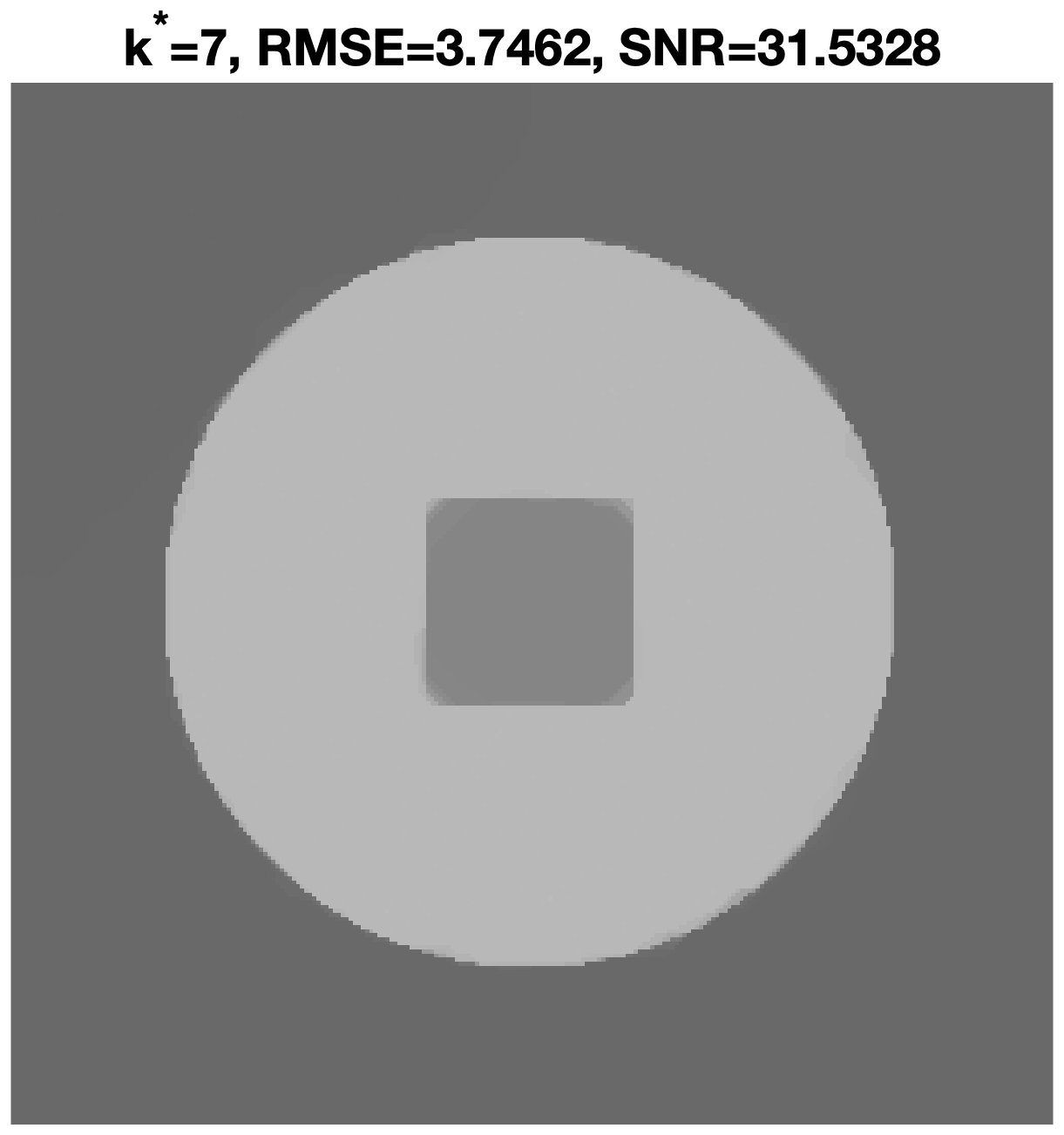}
		\end{subfigure}\\%
		\begin{subfigure}{0.24\textwidth}
			\includegraphics[width=\linewidth]{./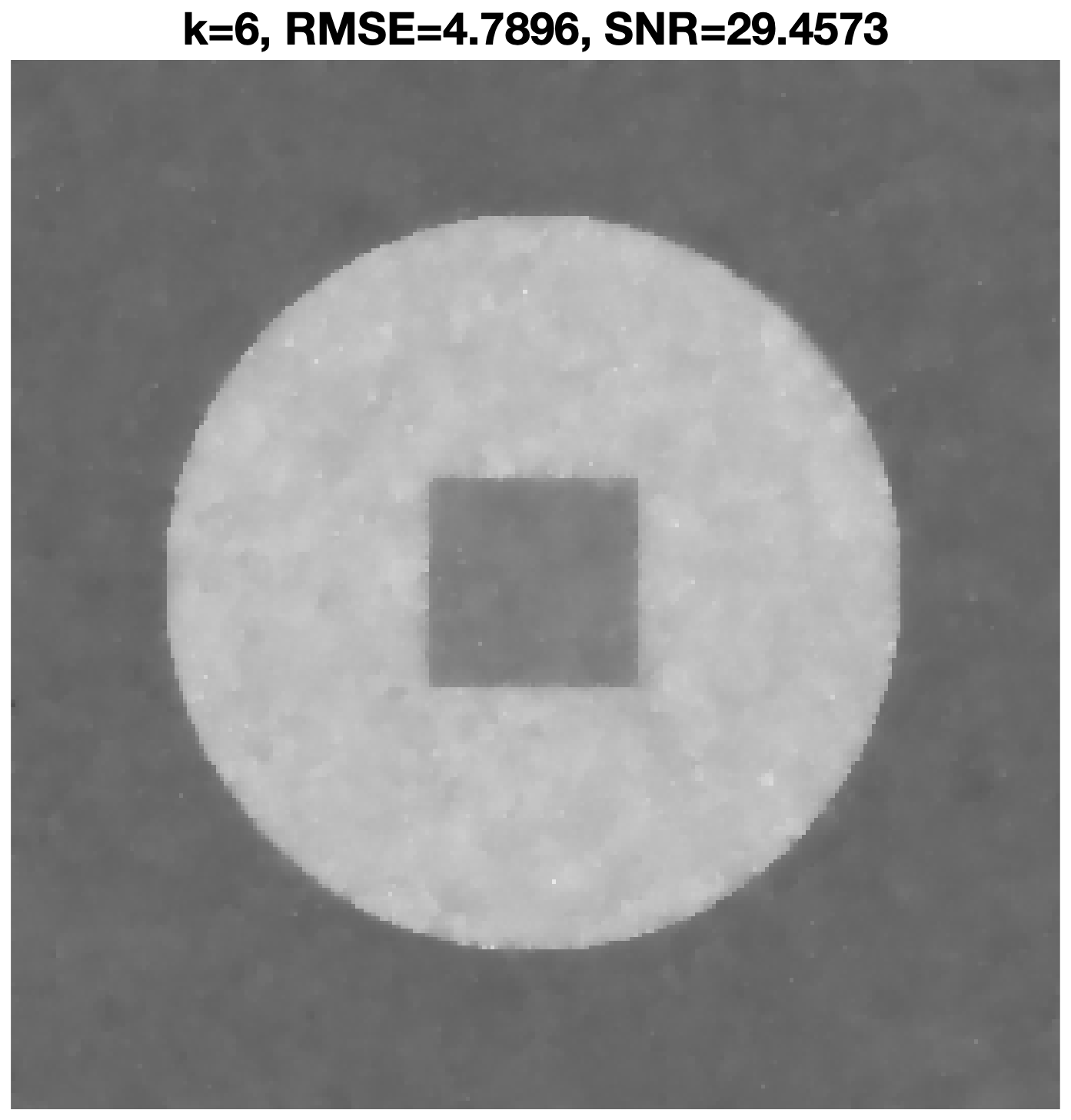}
		\end{subfigure}%
		\begin{subfigure}{0.24\textwidth}
			\includegraphics[width=\linewidth]{./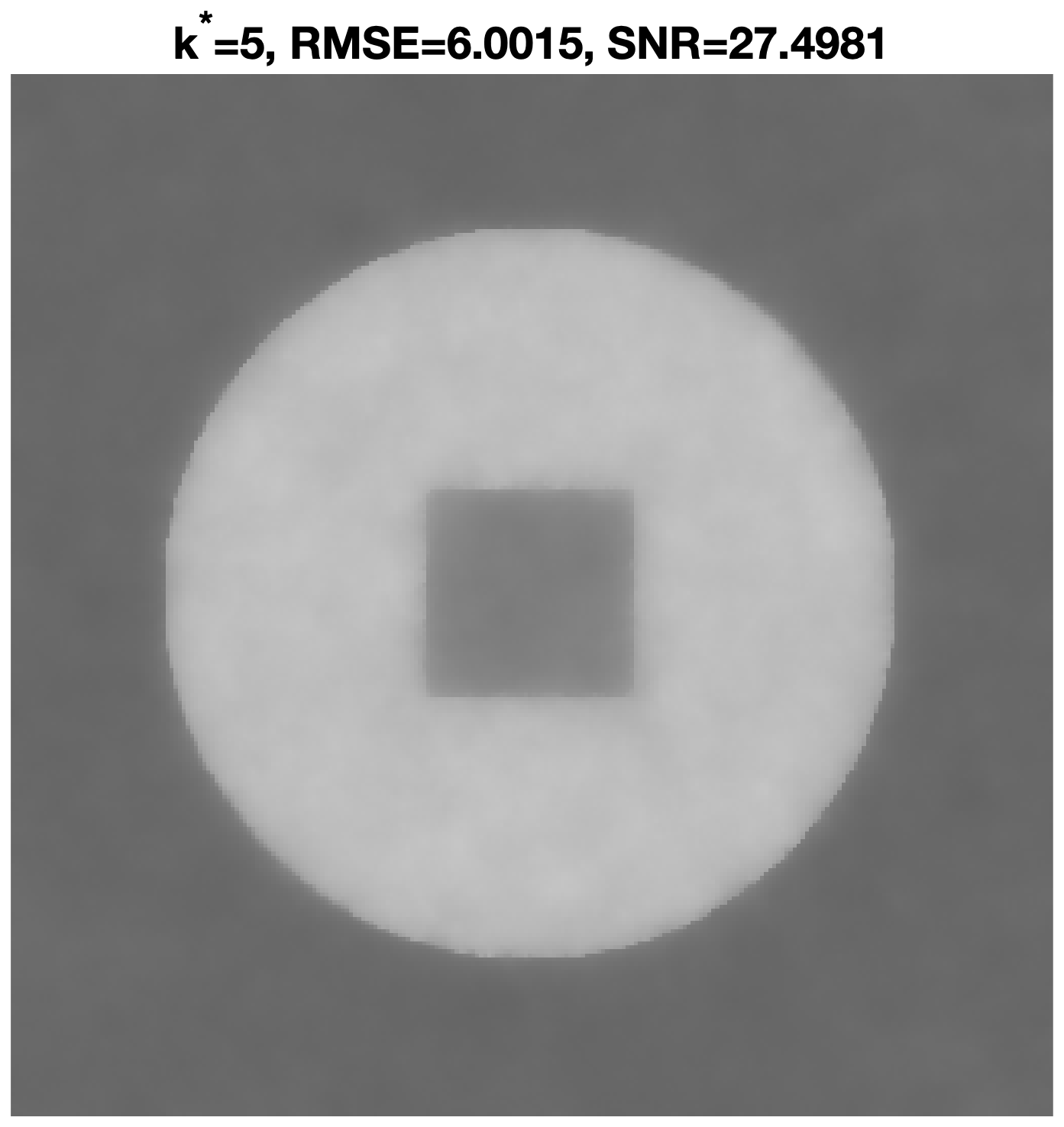}
		\end{subfigure}%
		\hspace*{\fill}   
		\begin{subfigure}{0.24\textwidth}
			\includegraphics[width=\linewidth]{./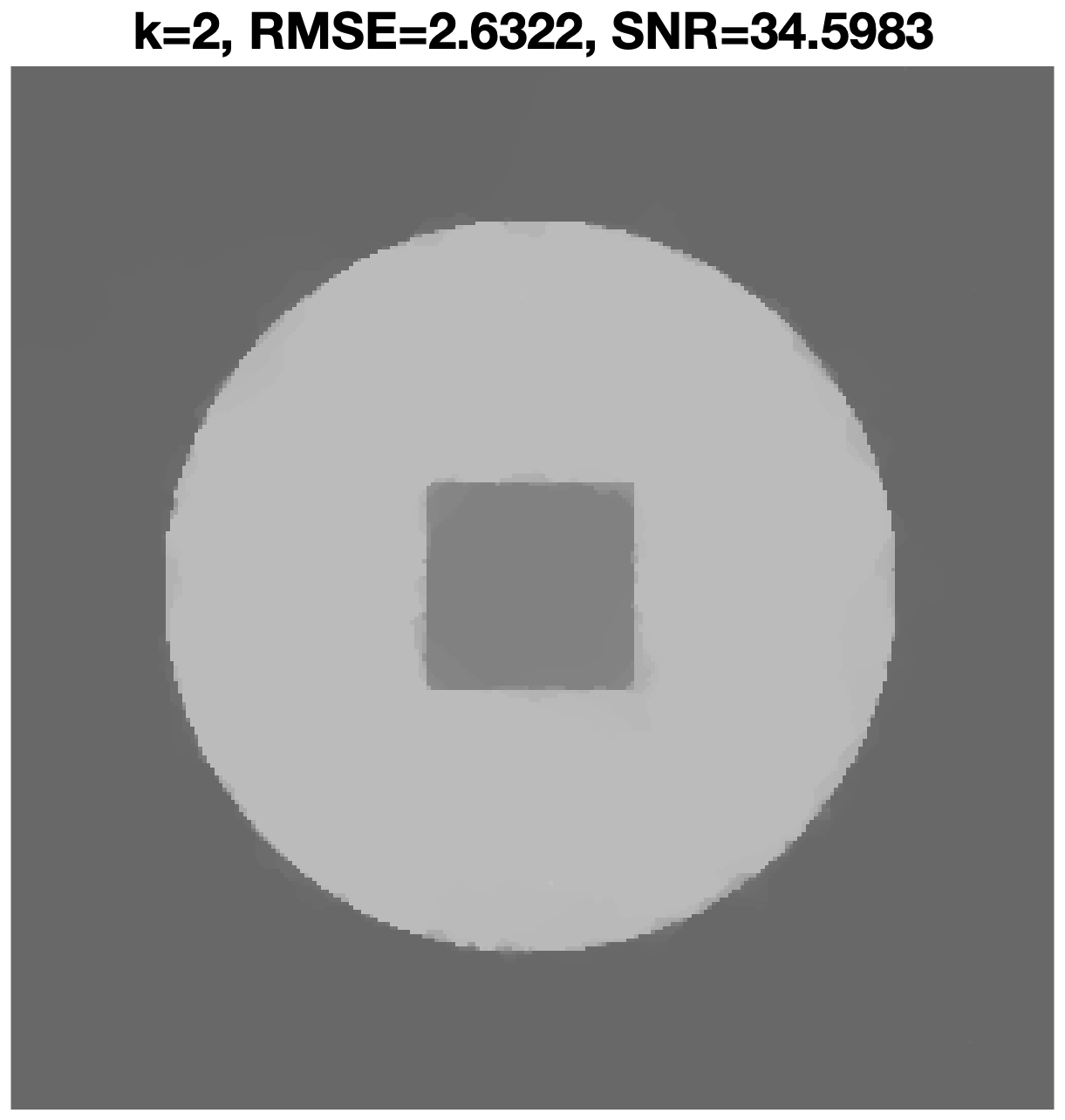}
		\end{subfigure}%
		\hspace*{0.24\textwidth}\\
		\begin{subfigure}{0.24\textwidth}
			\includegraphics[width=\linewidth]{./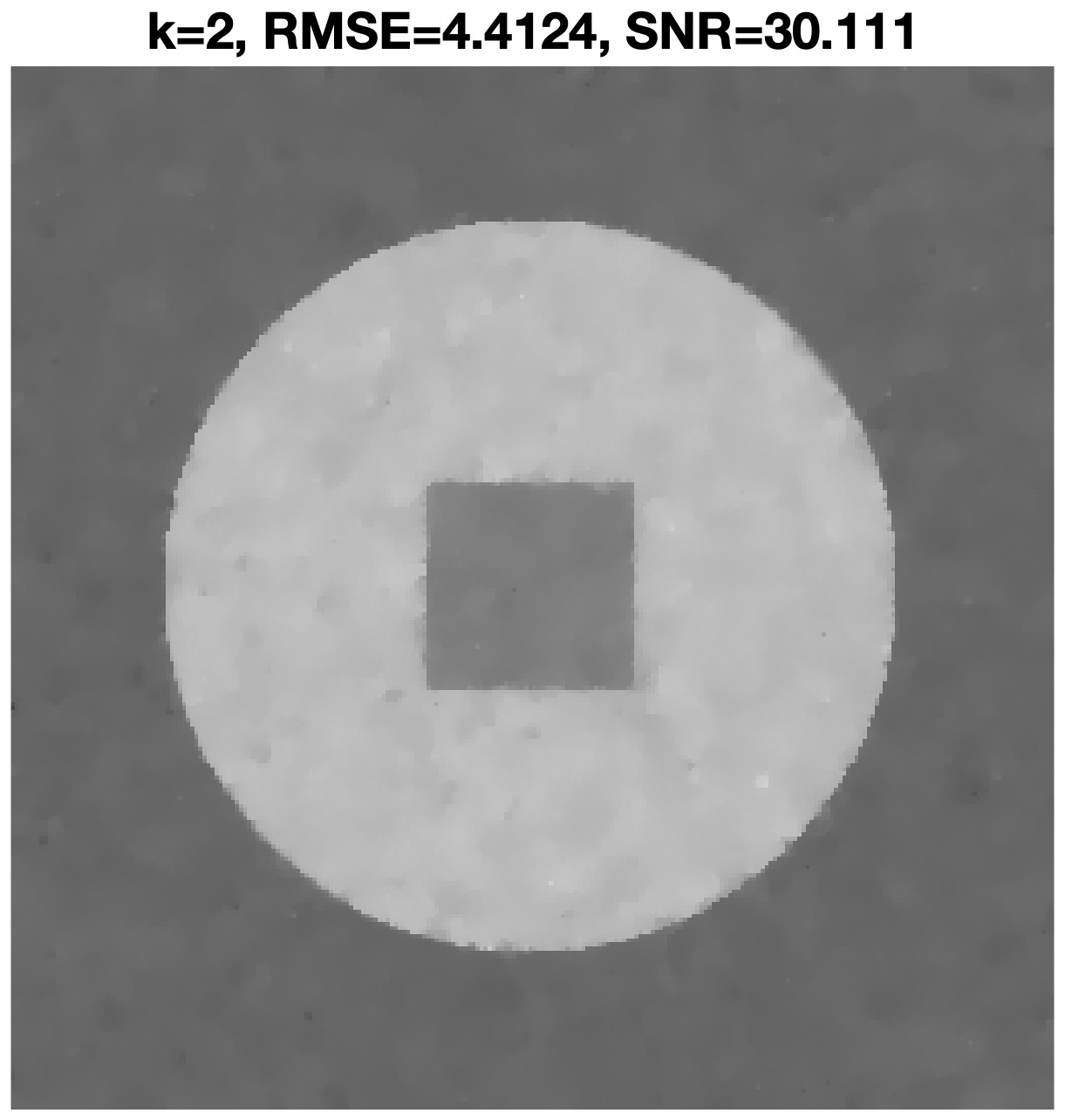}
		\end{subfigure}%
		\begin{subfigure}{0.24\textwidth}
			\includegraphics[width=\linewidth]{./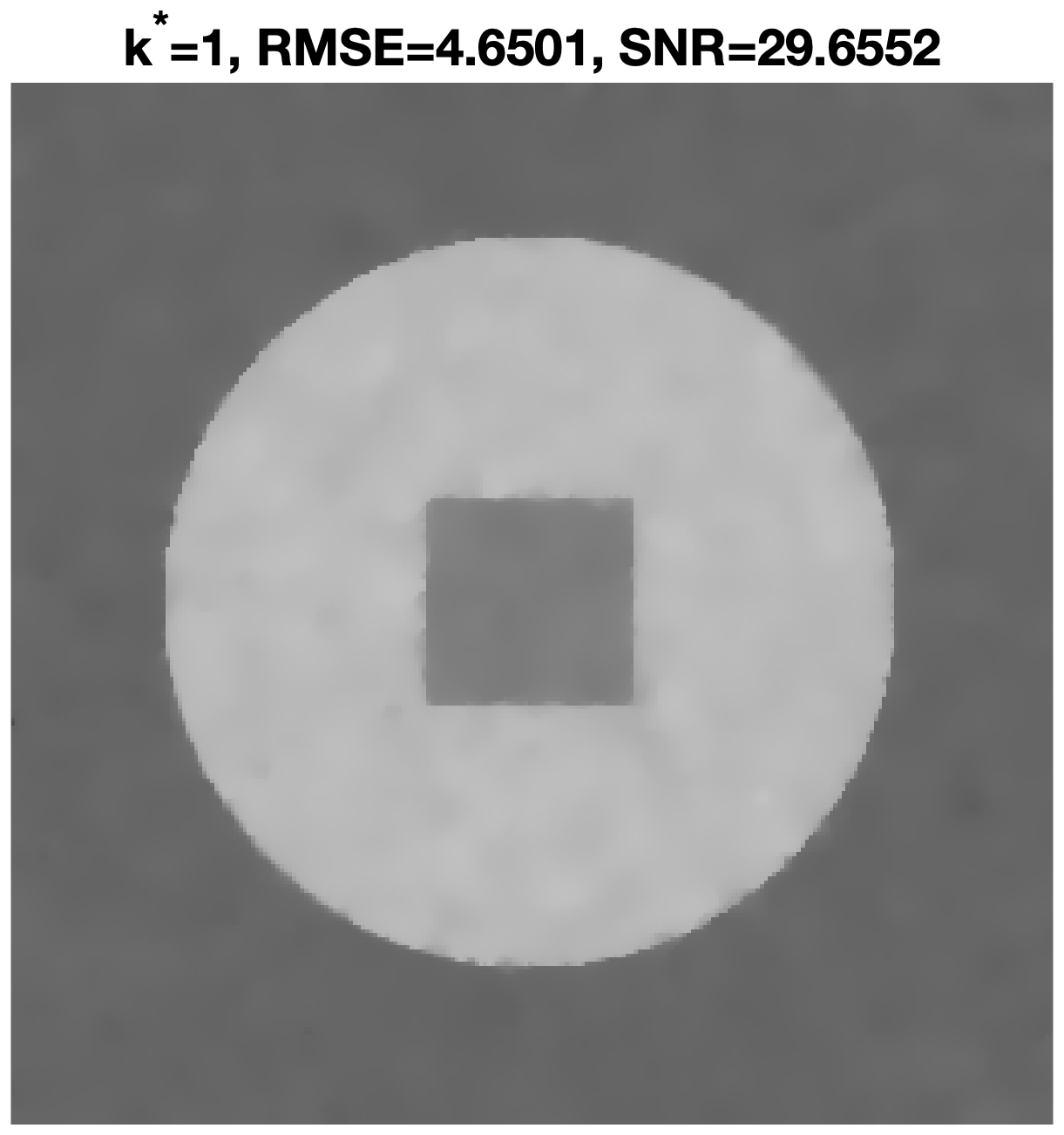}
		\end{subfigure}%
		\caption{SO MHDM EL (left) and ADMM (right) restored images. Rows 1, 2 and 3 are the regular, tight and refined model recoveries, respectively. Columns 1 and 3 are the $k_{min}$ restorations while columns 2 and 4 are the recoveries at the stopping criterion index $k^*$ (if $k^*\neq k_{min}$).} %
		\label{fig:SO_EL_ADMM_restored}
	\end{figure}%

	For the remaining higher-texture images, detailed recovery comparisons across all the SO MHDM models will be discussed in Section \ref{sec:comparisons}.  In summary, the best restorations (lowest {RMSE} and highest {SNR}) among the SO MHDM models are tight SO MHDM (ADMM) for `Cameraman' and `Geometry', and refined SO MHDM (EL) for `Barbara' and `Mandril'. This  confirms that the refined version is suitable for recovering  images with more texture. 
	%
	%
	\subsection{AA-log models}\label{subsec:aa-log}
	\subsubsection{Denoising}
	In contrast to the SO models, the AA MHDM and AA-log MHDM models address the noise directly, building a multiscale restoration through multiplicative decompositions. They are also well equipped to handle blurring, and perform comparably to SO in the regular and tight formulations. The AA MHDM method behaves similarly to the AA-log schemes (see Table \ref{tab:SNRcomparisons}), so the majority of discussion is dedicated to the latter. We note that with the same timestep as the AA-log scheme, a threshholding step to ensure that the iterates continue to satisfy $\inf_\Omega f^\delta \leq x_k \leq \sup_\Omega f^\delta$ helps with numerical stability. 
	
	In Fig.\,\ref{fig:AAlog_denoise_disk} we give the AA-log MHDM restorations, and note the reduced performance of the refined method, specifically on the smooth ``Geometry" image, is likely due to the weaker $ \|\log(u)\|_*$ penalty. 
	{As a result, it} may provide insufficient regularization to remove adequate noise, especially in smoother images. For the regular and tight formulations, the AA-log MHDM scheme produces better recoveries than its SO MHDM (EL) counterparts for the ``Geometry" image. We also include the tight $k^*$ recovery in Fig.\,\ref{fig:AAlog_denoise_disk}, which visually is a better restoration despite not obtaining the lowest RMSE.
	
	Worth remarking is the ability of the AA-log MHDM methods to recover corners and edges in the ``Geometry", even with very high noise levels. This is shown in Fig.\,\ref{fig:AA_high_noise}, which compares the original AA recovery to the AA-log MHDM tight restoration of a severely noise-degraded images (gamma noise $g(x;1)$, standard deviation 1), as 
	tested in Figures 2 and 3 from \cite{auber_aujol_2008}.  
	\begin{remark}
		As noted in \cite{tad_nez_ves2}, the presence of $\log(Tu)$ and $f^\delta/Tu$ terms in the AA-log MHDM models require the images to take strictly positive pixel values. Accordingly, images should be shifted away from zero, processed, and then shifted back appropriately when near-black pixels are expected. 	
	\end{remark}
	\begin{figure}[h]
		\centering
		\begin{subfigure}{0.24\textwidth}
			\includegraphics[width=\linewidth]{./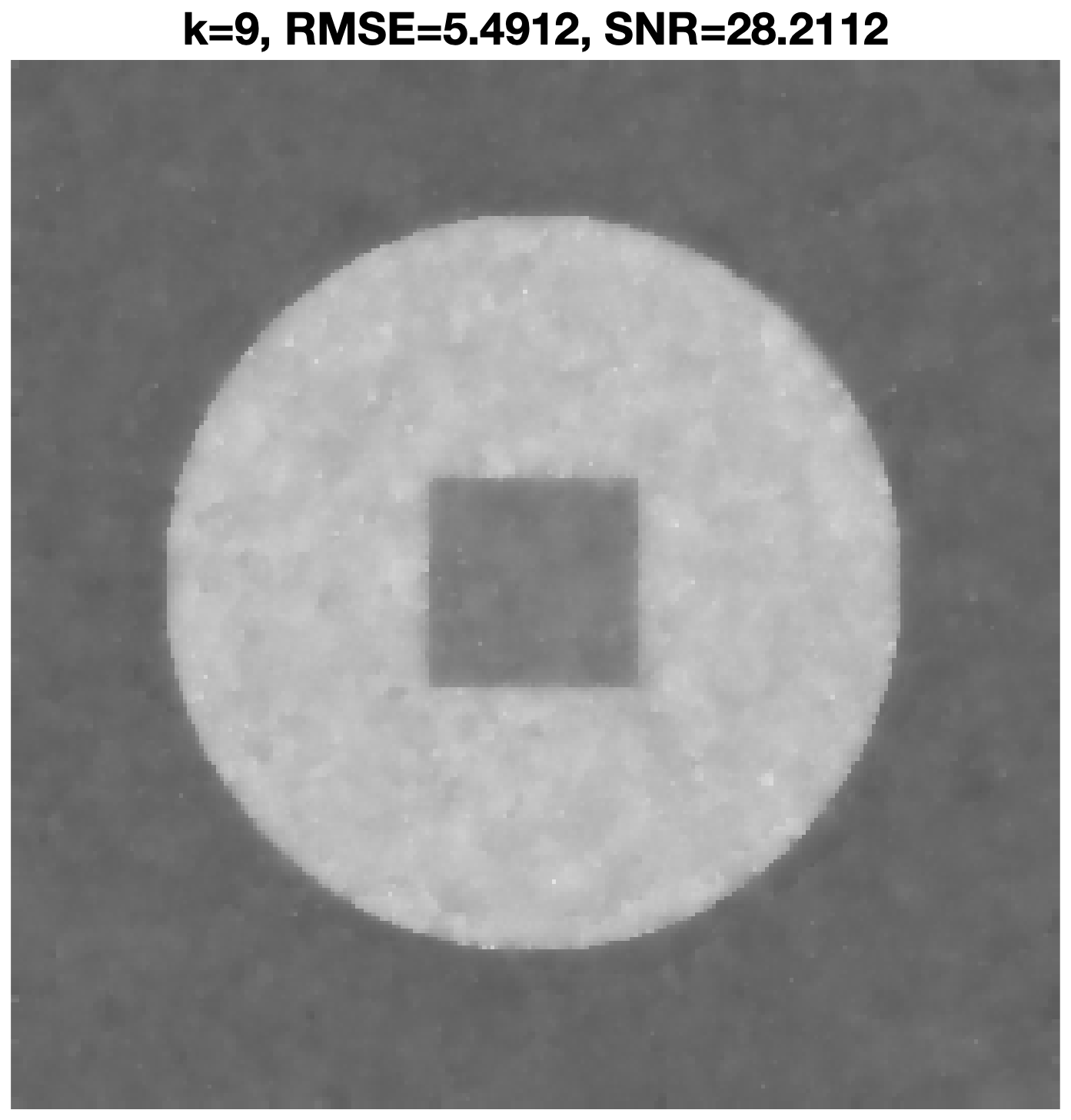}
		\end{subfigure}%
		\hspace{\fill}
		\begin{subfigure}{0.24\textwidth}
			\includegraphics[width=\linewidth]{./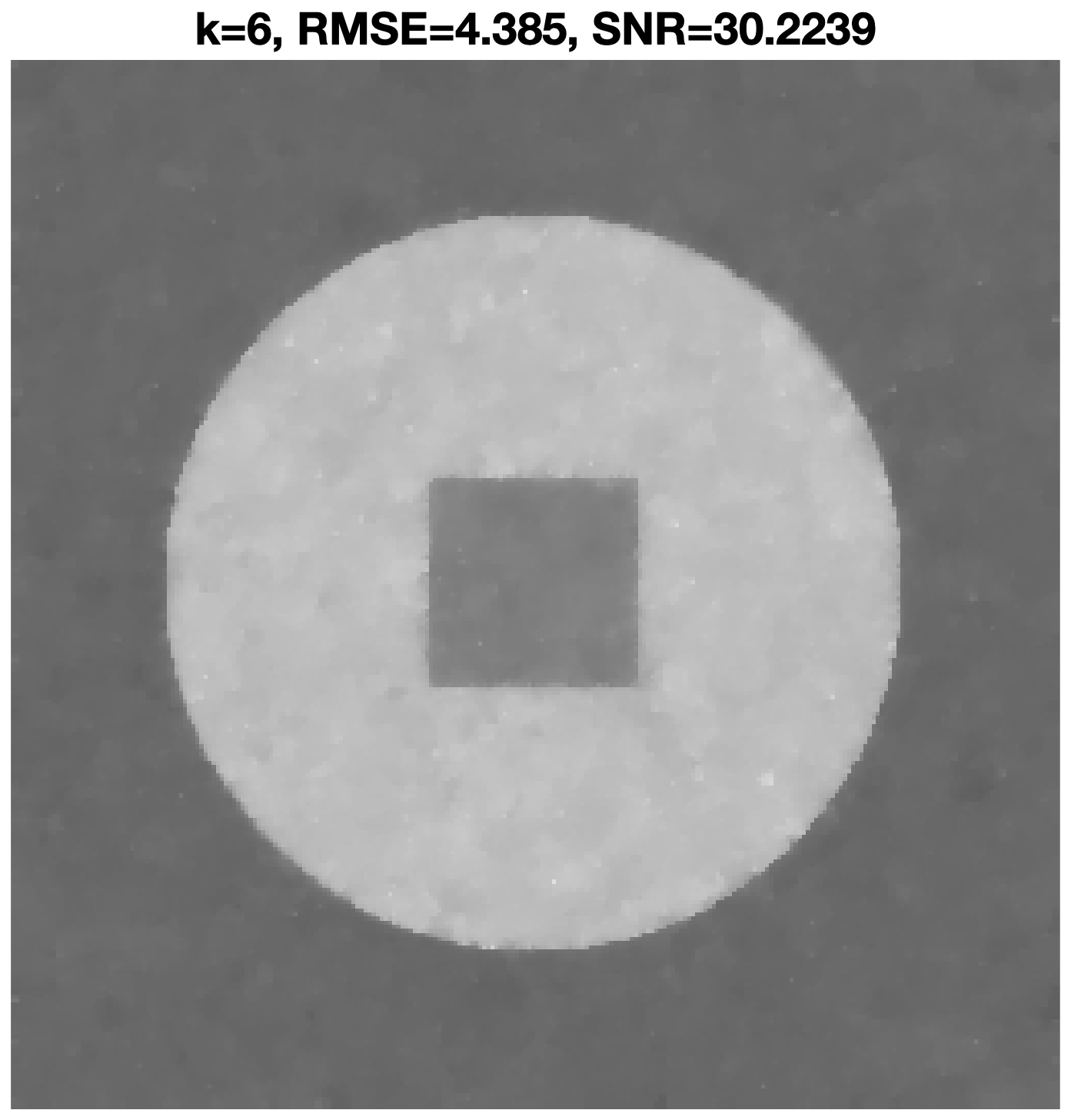}
		\end{subfigure}%
		\begin{subfigure}{0.24\textwidth}
			\includegraphics[width=\linewidth]{./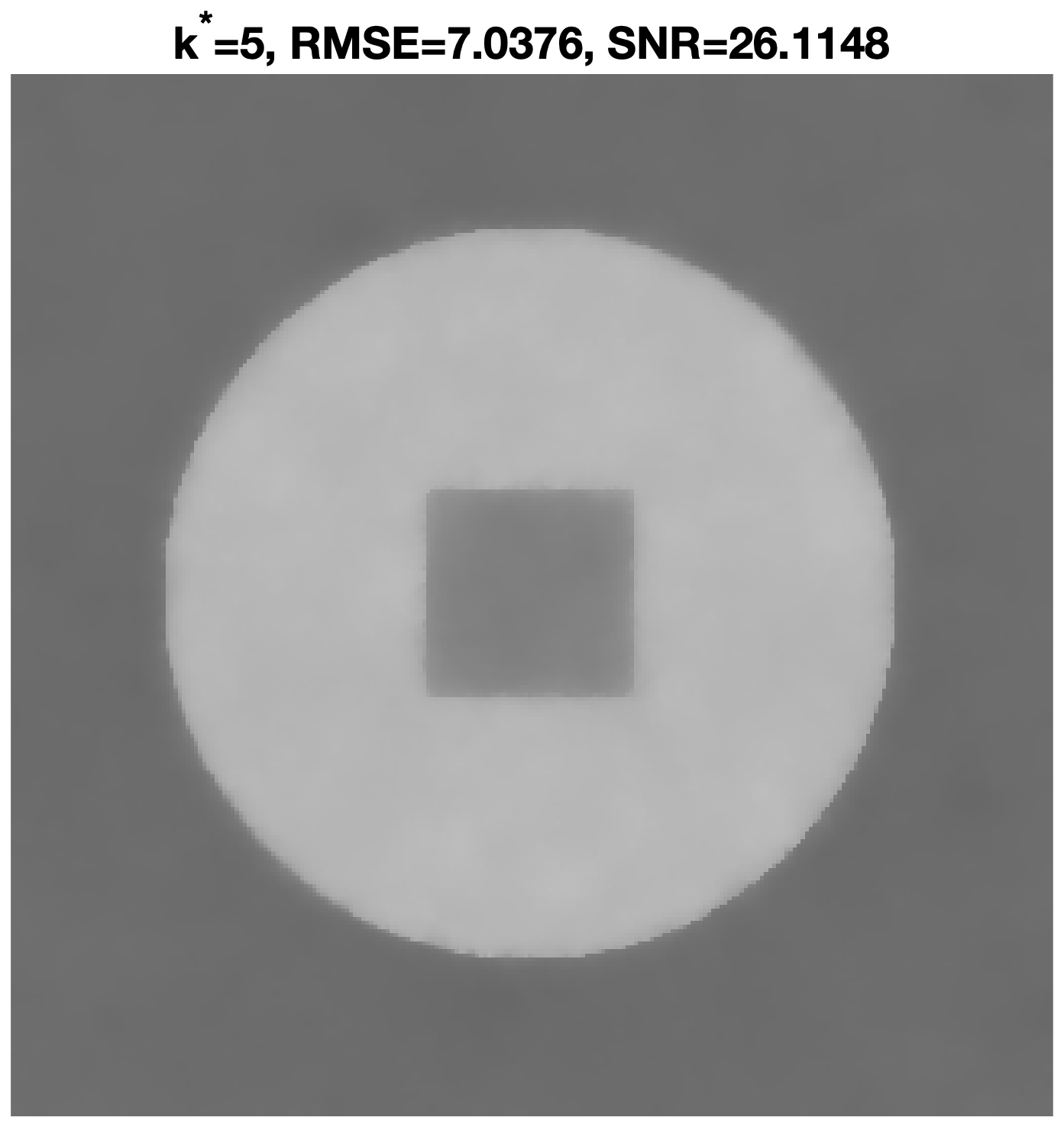}
		\end{subfigure}%
		\hspace{\fill}
		\begin{subfigure}{0.24\textwidth}
			\includegraphics[width=\linewidth]{./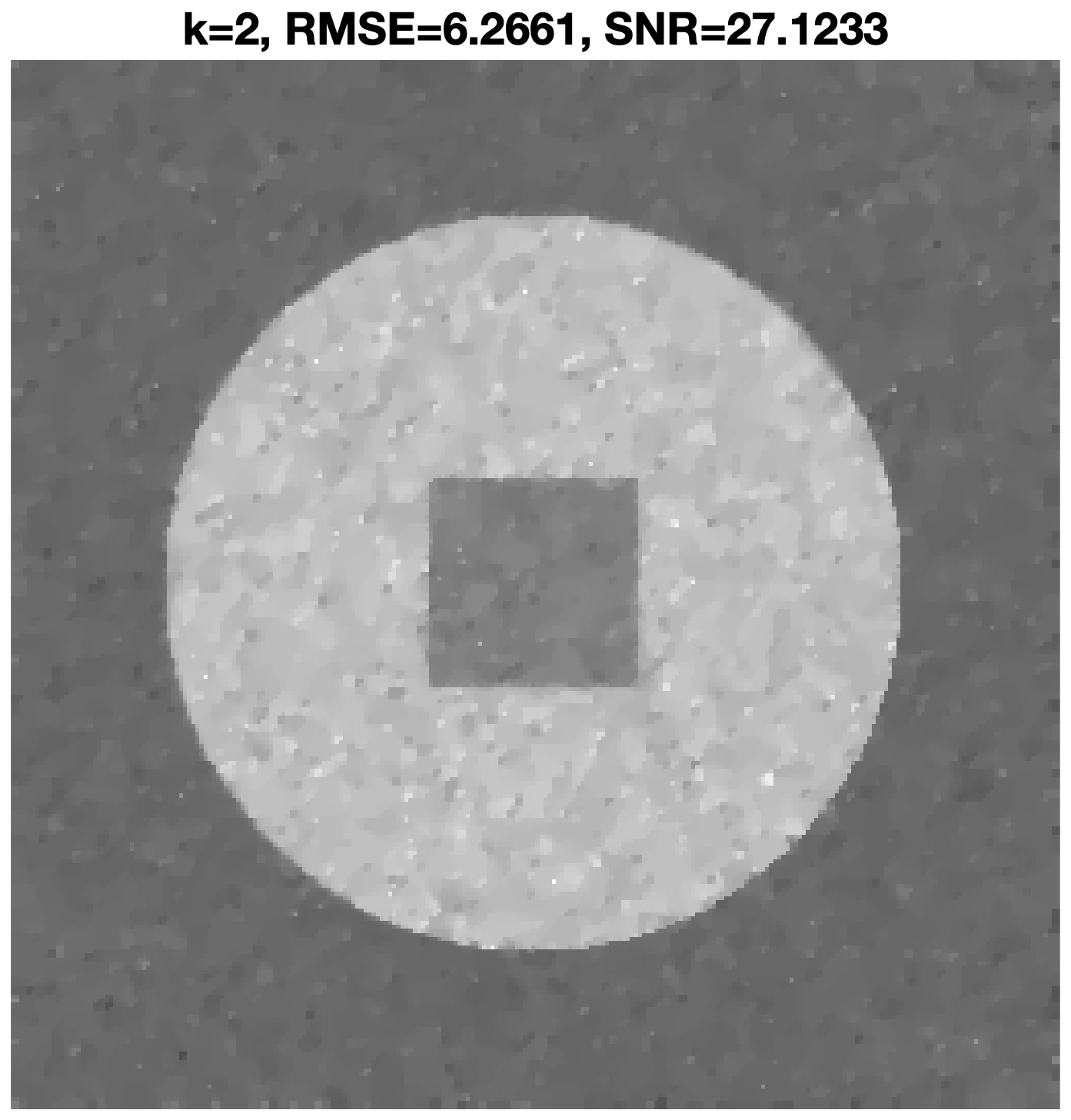}
		\end{subfigure}\\%
		\begin{subfigure}{0.24\textwidth}
			\includegraphics[width=\linewidth]{./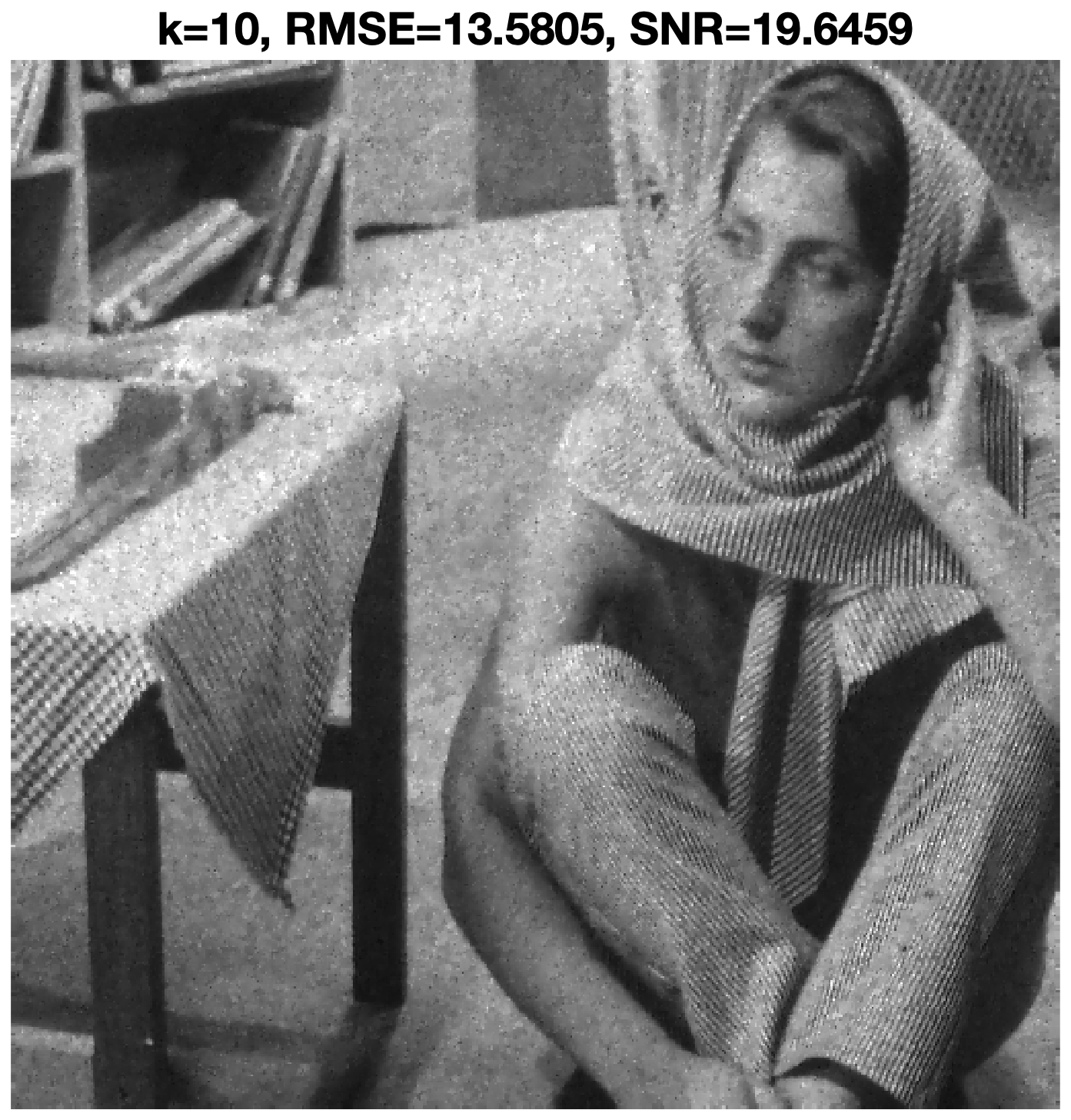}
		\end{subfigure}%
		\hspace{\fill}
		\begin{subfigure}{0.24\textwidth}
			\includegraphics[width=\linewidth]{./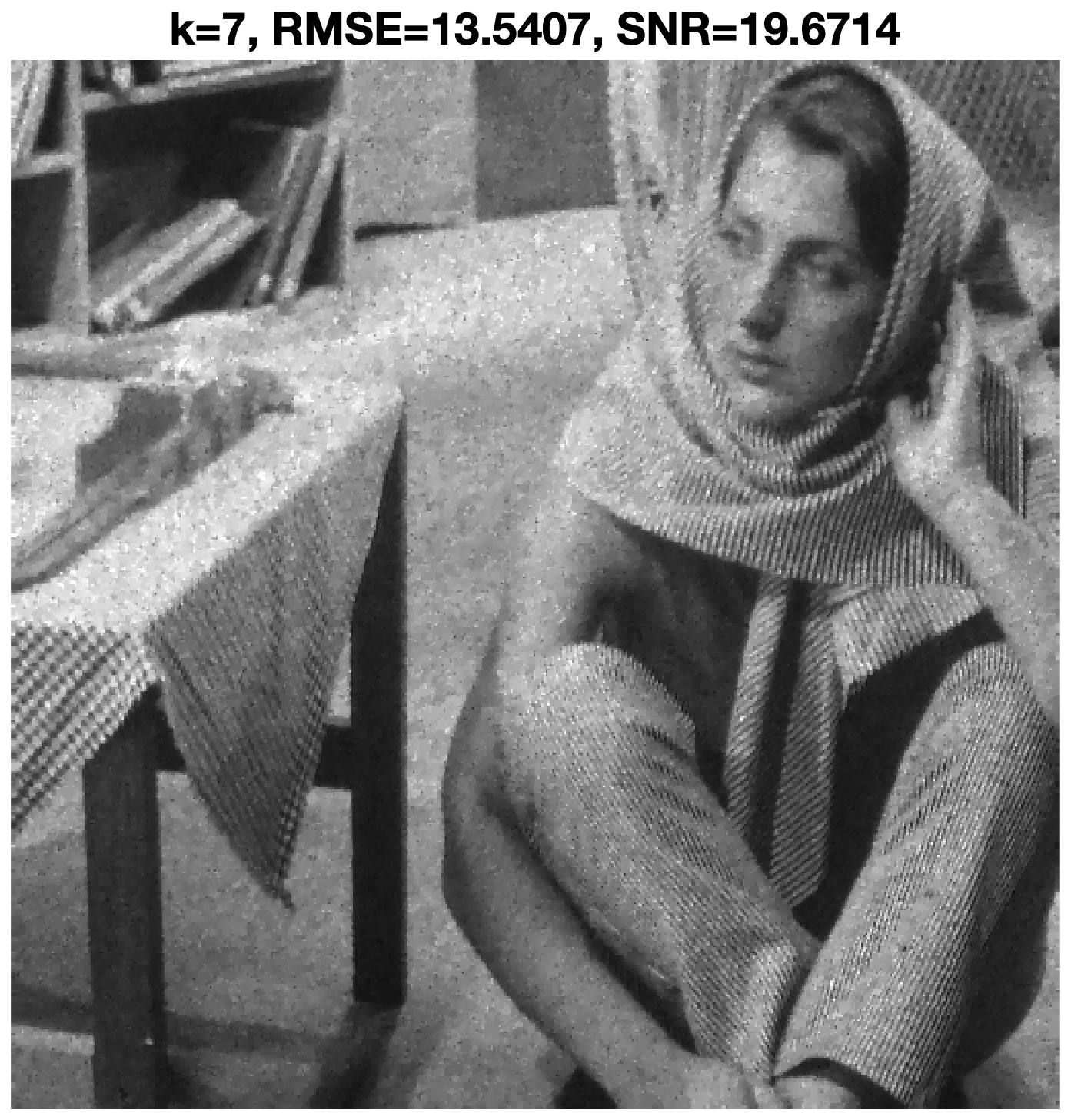}
		\end{subfigure}%
		\begin{subfigure}{0.24\textwidth}
			\includegraphics[width=\linewidth]{./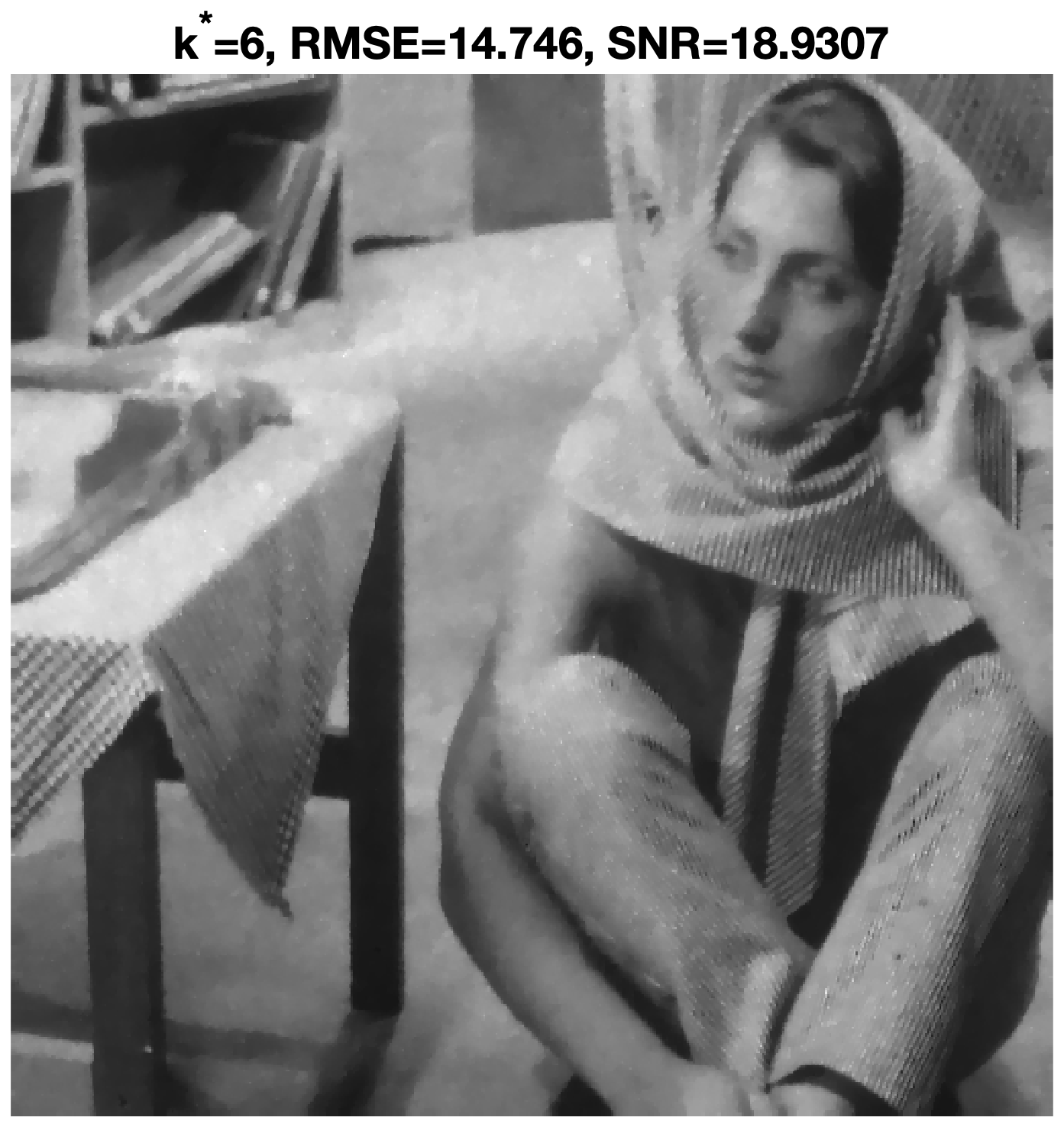}
		\end{subfigure}%
		\hspace{\fill}
		\begin{subfigure}{0.24\textwidth}
			\includegraphics[width=\linewidth]{./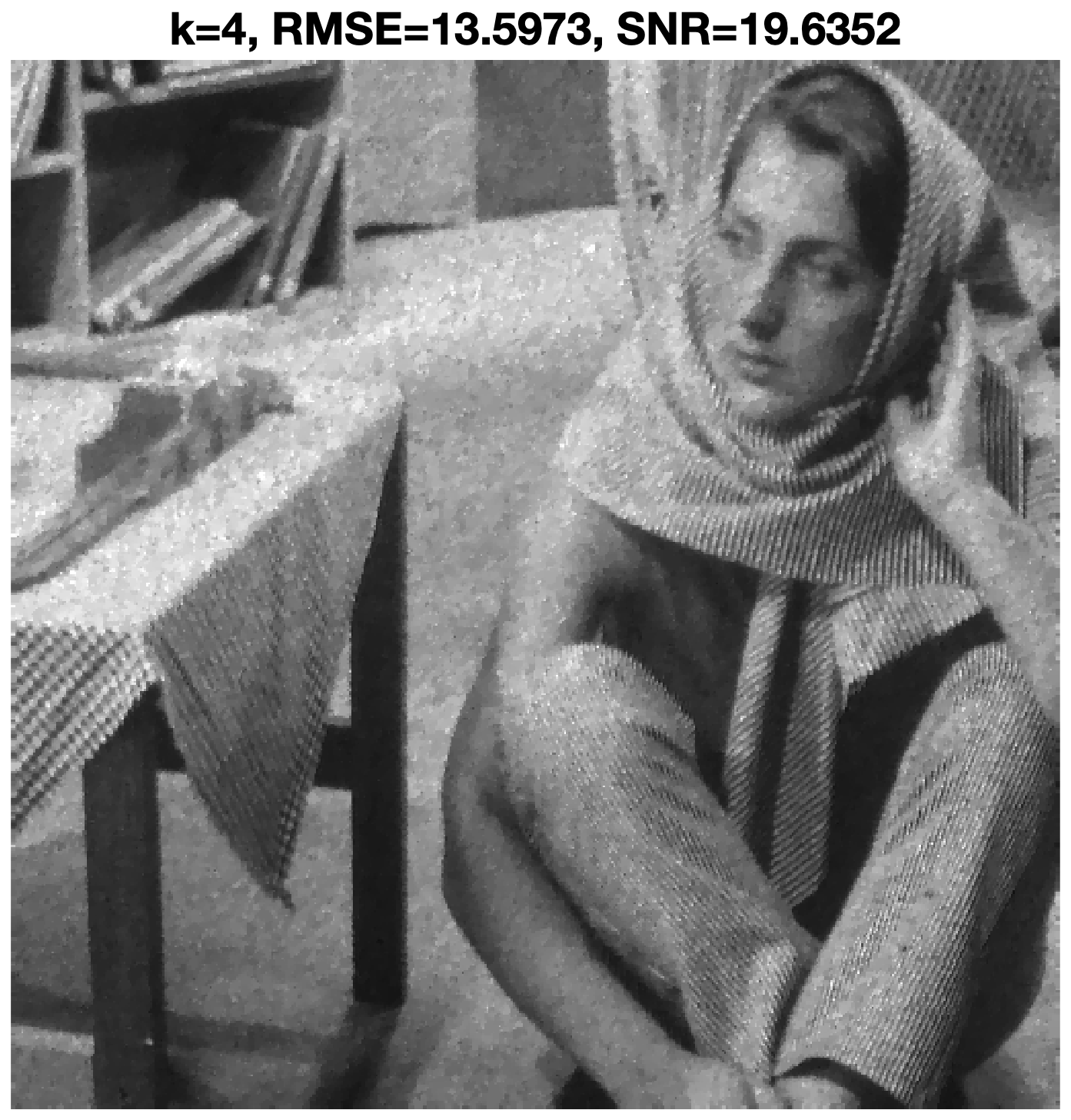}
		\end{subfigure}%
		\caption{AA-log MHDM denoised images. Columns 1, 2, and 4 are the $k_{min}$ regular, tight and refined restorations, respectively. Columns 3 is the tight recovery at the stopping criterion $k^*$. }%
		\label{fig:AAlog_denoise_disk}%
	\end{figure}%
	\begin{figure}[h]
		\centering
		\begin{subfigure}{0.24\textwidth}
			\includegraphics[width=\linewidth]{./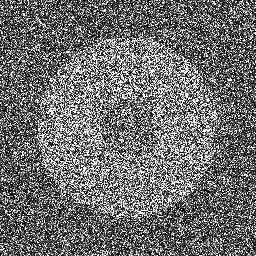}
		\end{subfigure}\hspace{2.3mm}%
		\begin{subfigure}{0.24\textwidth}
			\includegraphics[width=\linewidth]{./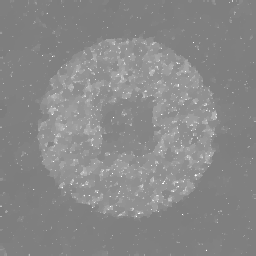}
		\end{subfigure}\hspace{2.3mm}%
		\begin{subfigure}{0.24\textwidth}
			\includegraphics[width=\linewidth]{./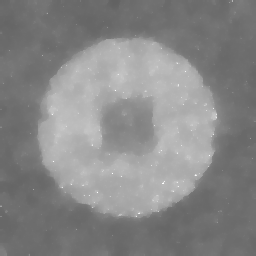}
		\end{subfigure}%
		\caption{Recoveries from severe noise (standard deviation 1). From left to right: noisy image ($SNR = -0.079$), original AA model \cite{auber_aujol_2008} ($SNR = 13.42$), and the AA-log MHDM tight recovery ($k_{min} = 4$, $SNR = 20.22$).}
		\label{fig:AA_high_noise}
	\end{figure}
	\subsubsection{Denoising-deblurring}
	One of the primary advantages of the AA-log MHDM models is handling deblurring in addition to denoising. Figure \ref{fig:blurred} gives the blurry, noisy counterparts of the test images, and Fig.\,\ref{fig:AA_blur} shows  the AA-log MHDM recoveries. All images are blurred with a $5\times 5$ Gaussian filter with standard deviation $\sqrt{2}$. The method effectively sharpens edges (see ``Cameraman's" jacket and ``Geometry") while maintaining texture (the tablecloth in ``Barbara" and whiskers in ``Mandril"). The refined version with its weaker $*$-norm suffers from greater numerical instability, exhibiting a slight drop from the tight version in SNR. 
	\begin{figure}[h]
		\centering
		\begin{subfigure}{0.24\textwidth}
			\includegraphics[width=\linewidth]{./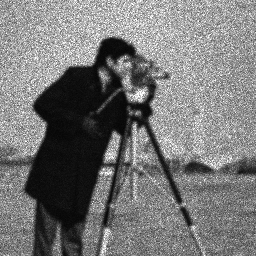}
			\caption{RMSE= 30.58, SNR=12.90}
		\end{subfigure}%
		\hspace*{\fill}%
		\begin{subfigure}{0.24\textwidth}
			\includegraphics[width=\linewidth]{./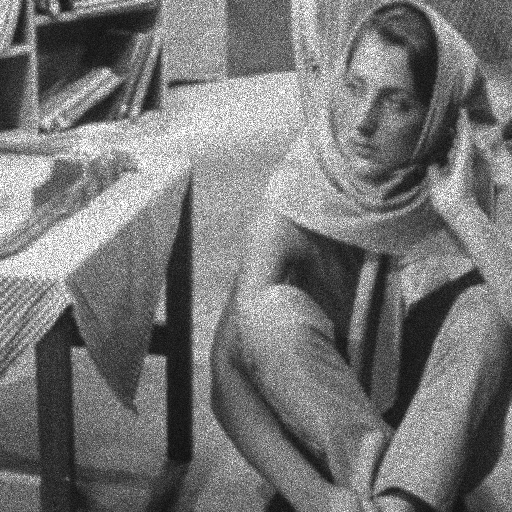}
			\caption{RMSE=30.06, SNR=12.74}
		\end{subfigure}%
		\hspace*{\fill}
		\begin{subfigure}{0.24\textwidth}
			\includegraphics[width=\linewidth]{./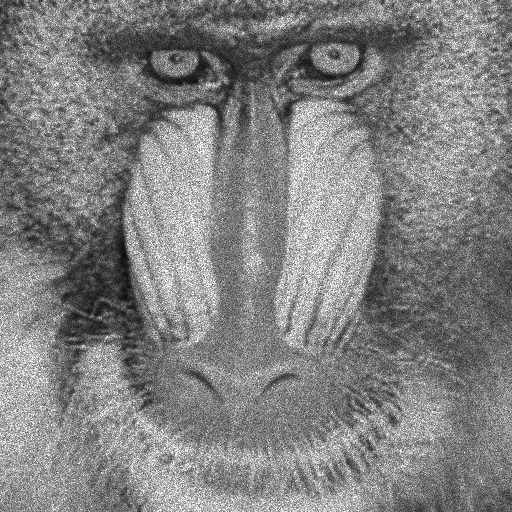}
			\caption{RMSE=29.50, SNR=13.24}
		\end{subfigure}%
		\hspace*{\fill}%
		\begin{subfigure}{0.24\textwidth}
			\includegraphics[width=\linewidth]{./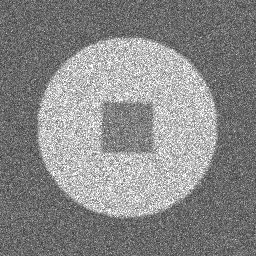}
			\caption{RMSE=28.84, SNR=13.86}
		\end{subfigure}%
		\caption{Images degraded with multiplicative gamma noise and Gaussian blur. Blurring from a $5\times 5$ filter with standard deviation $\sqrt{2}$. Noise as before with previous images.} \label{fig:blurred}
	\end{figure}%
	\begin{figure}[h]
		\centering
		\begin{subfigure}{0.24\textwidth}
			\includegraphics[width=\linewidth]{./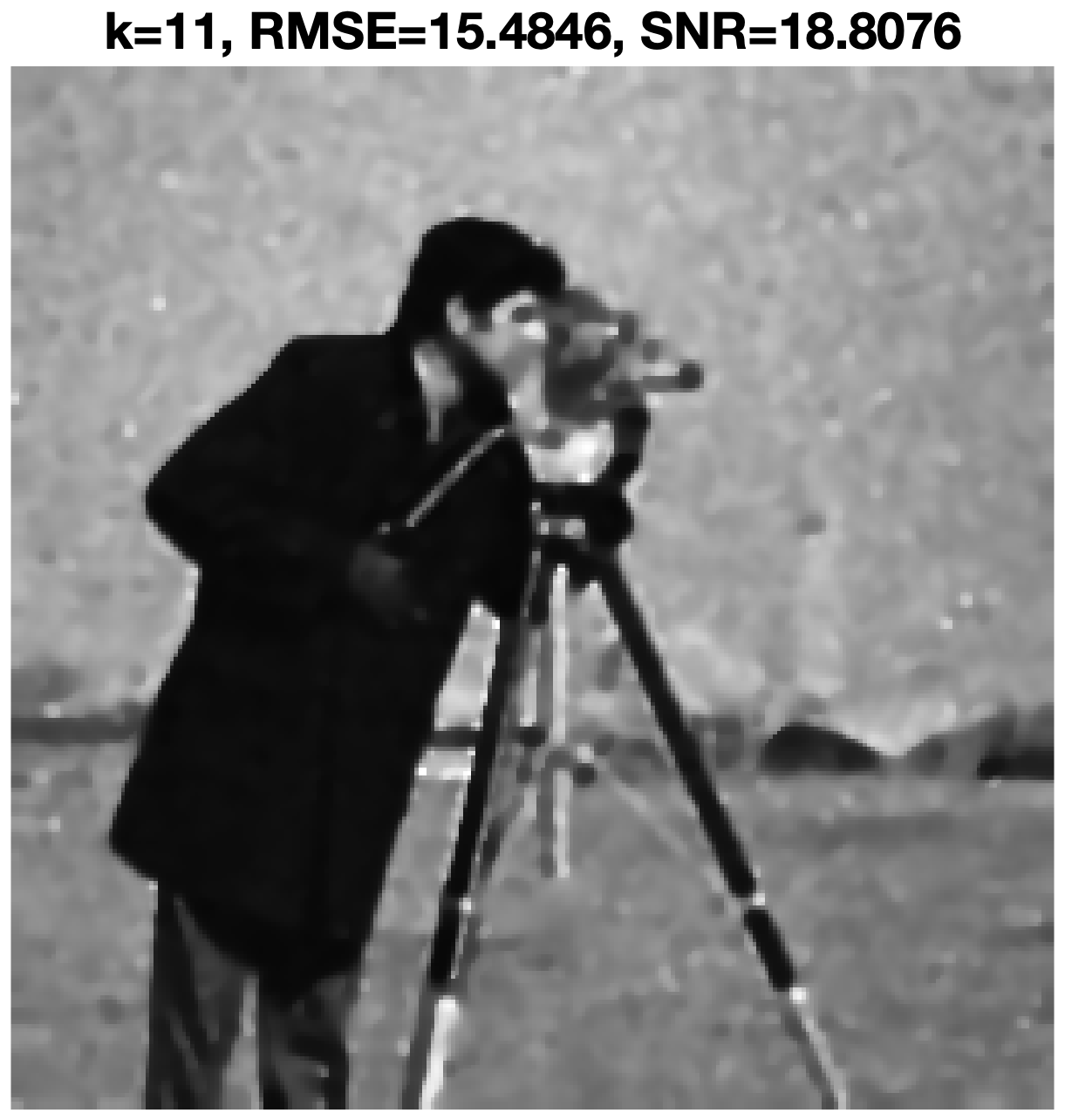}
		\end{subfigure}%
		\hspace*{\fill}  
		\begin{subfigure}{0.24\textwidth}
			\includegraphics[width=\linewidth]{./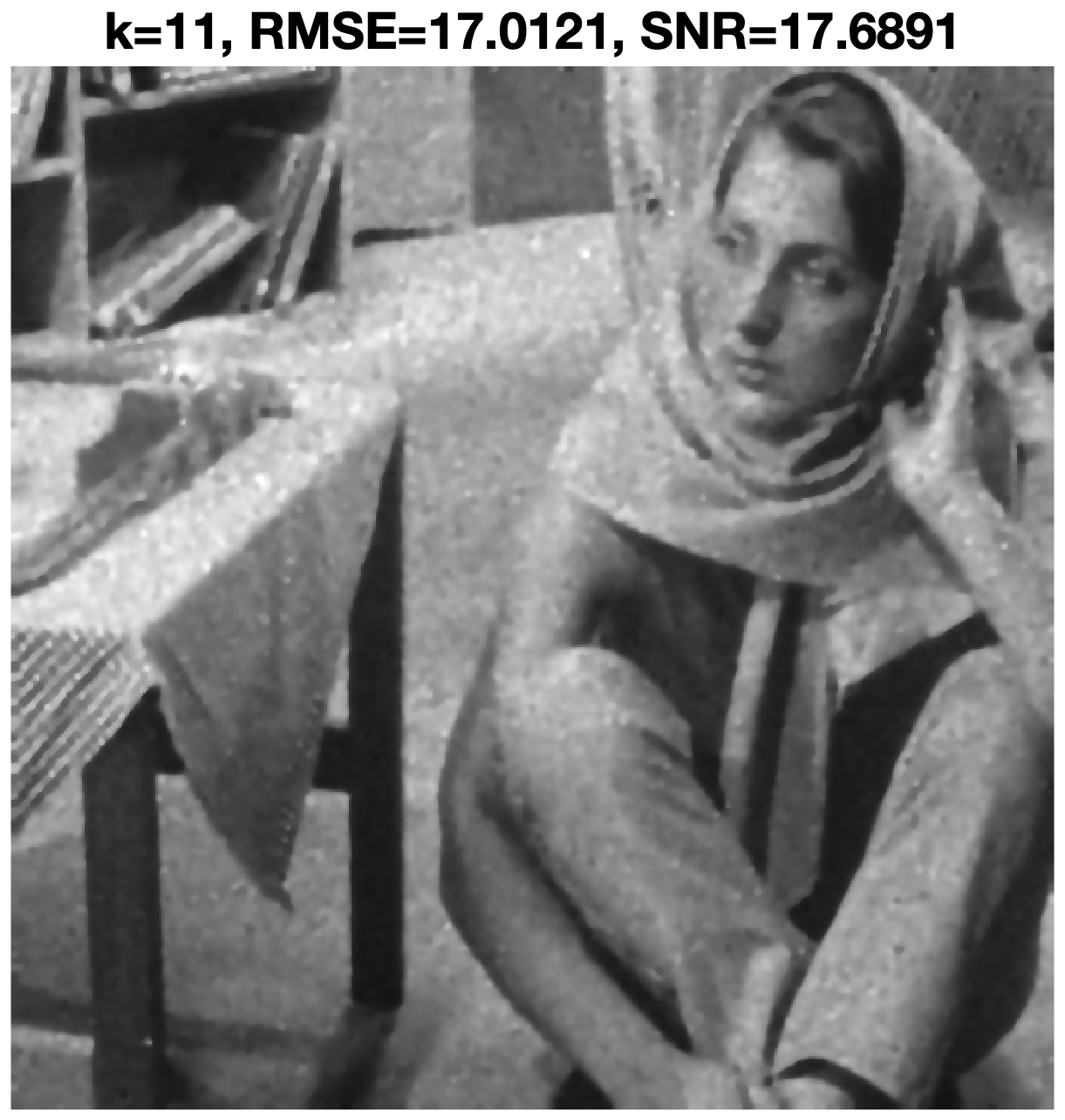}
		\end{subfigure}%
		\hspace*{\fill}   
		\begin{subfigure}{0.24\textwidth}
			\includegraphics[width=\linewidth]{./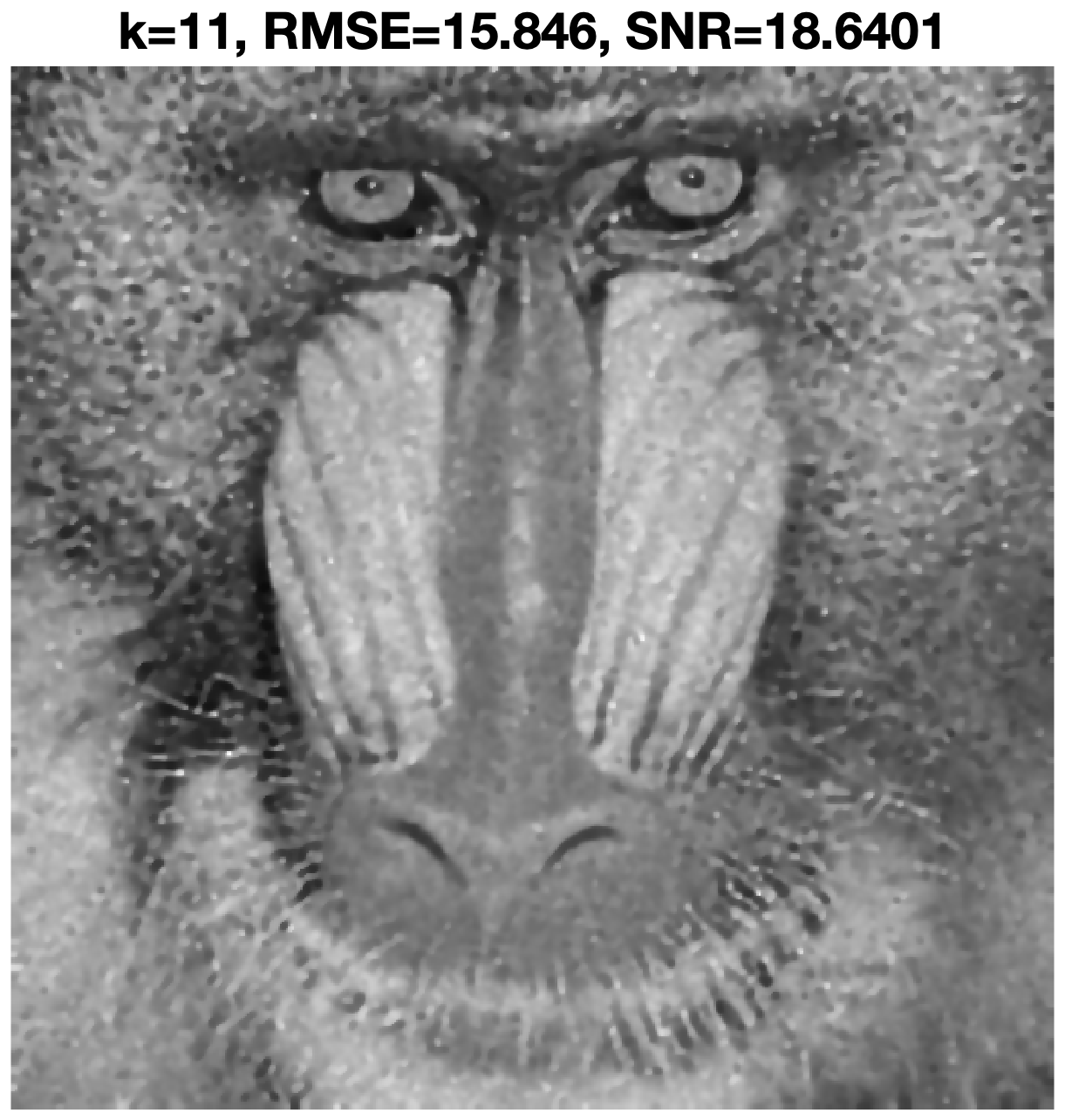}
		\end{subfigure}%
		\hspace*{\fill}
		\begin{subfigure}{0.24\textwidth}
			\includegraphics[width=\linewidth]{./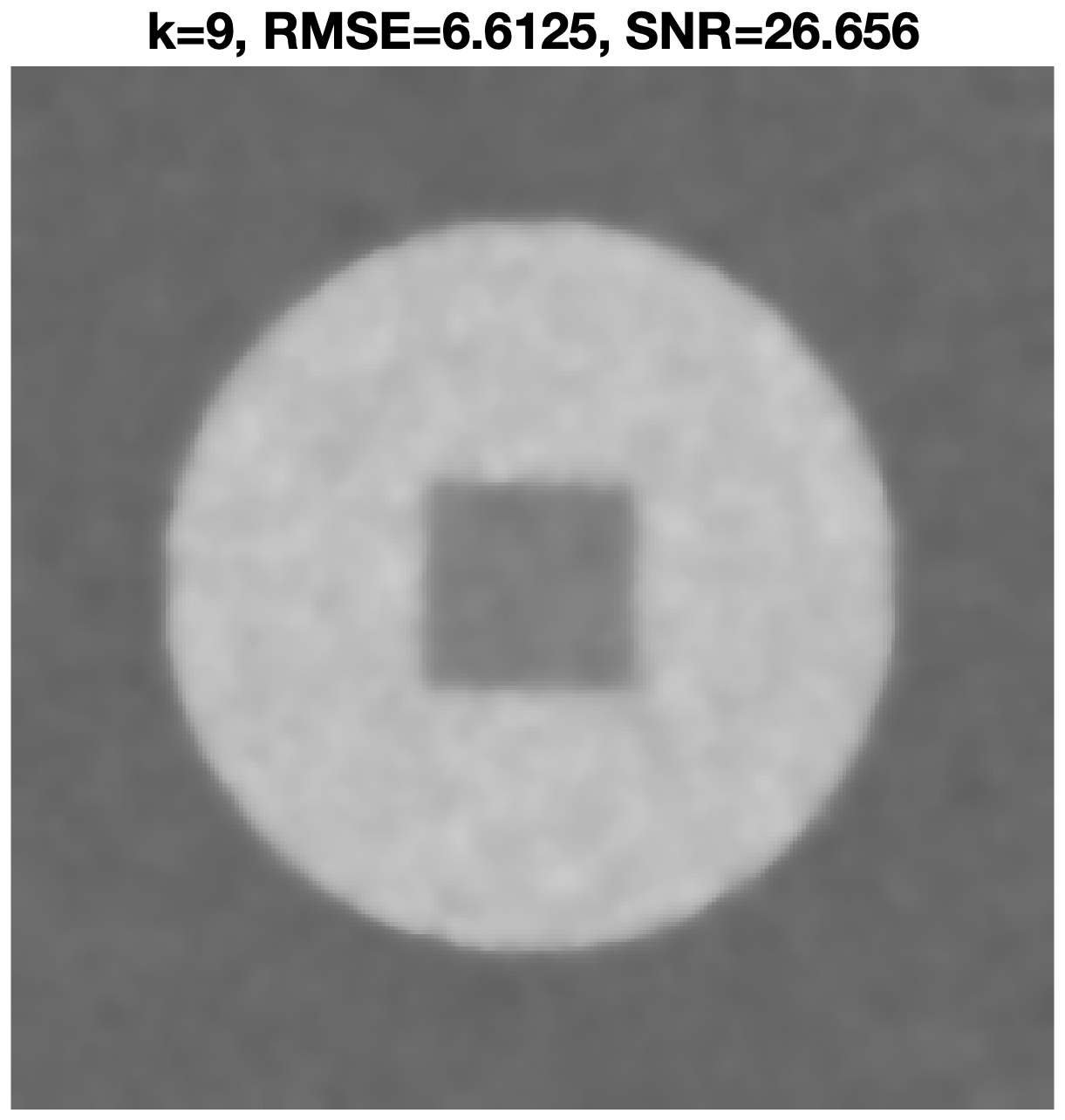}
		\end{subfigure}\\%
		\begin{subfigure}{0.24\textwidth}
			\includegraphics[width=\linewidth]{./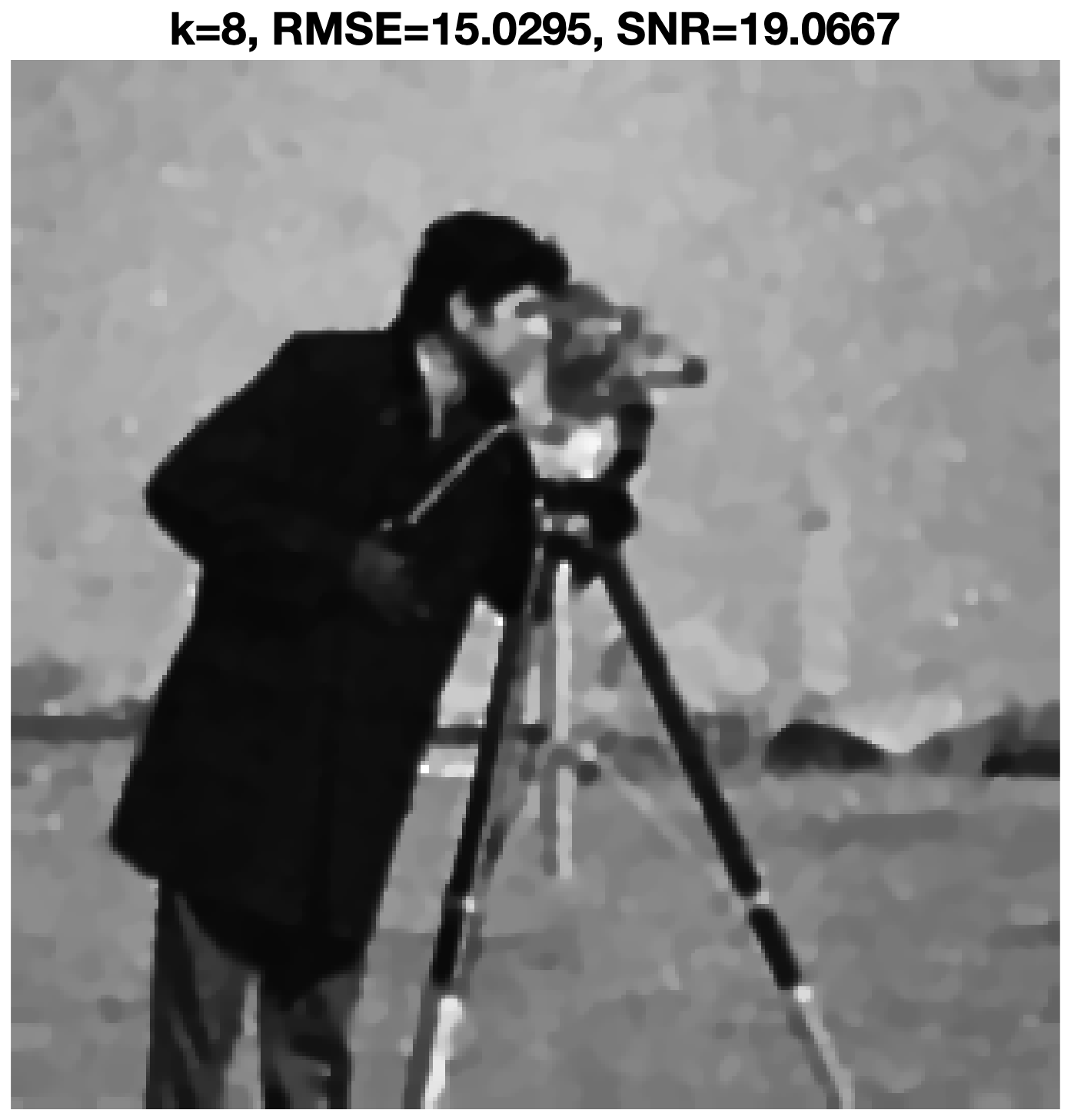}
		\end{subfigure}%
		\hspace*{\fill}  
		\begin{subfigure}{0.24\textwidth}
			\includegraphics[width=\linewidth]{./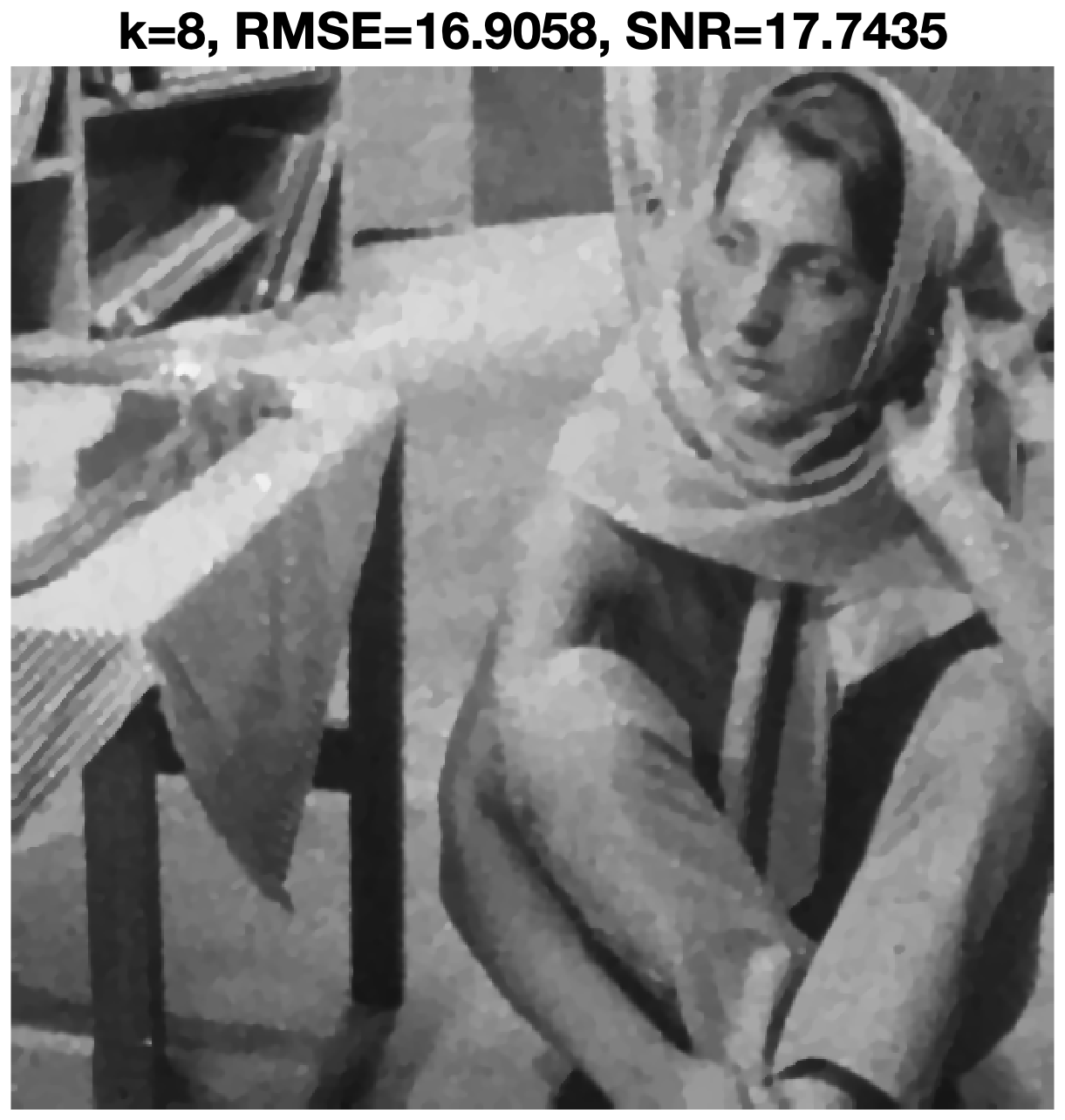}
		\end{subfigure}%
		\hspace*{\fill}   
		\begin{subfigure}{0.24\textwidth}
			\includegraphics[width=\linewidth]{./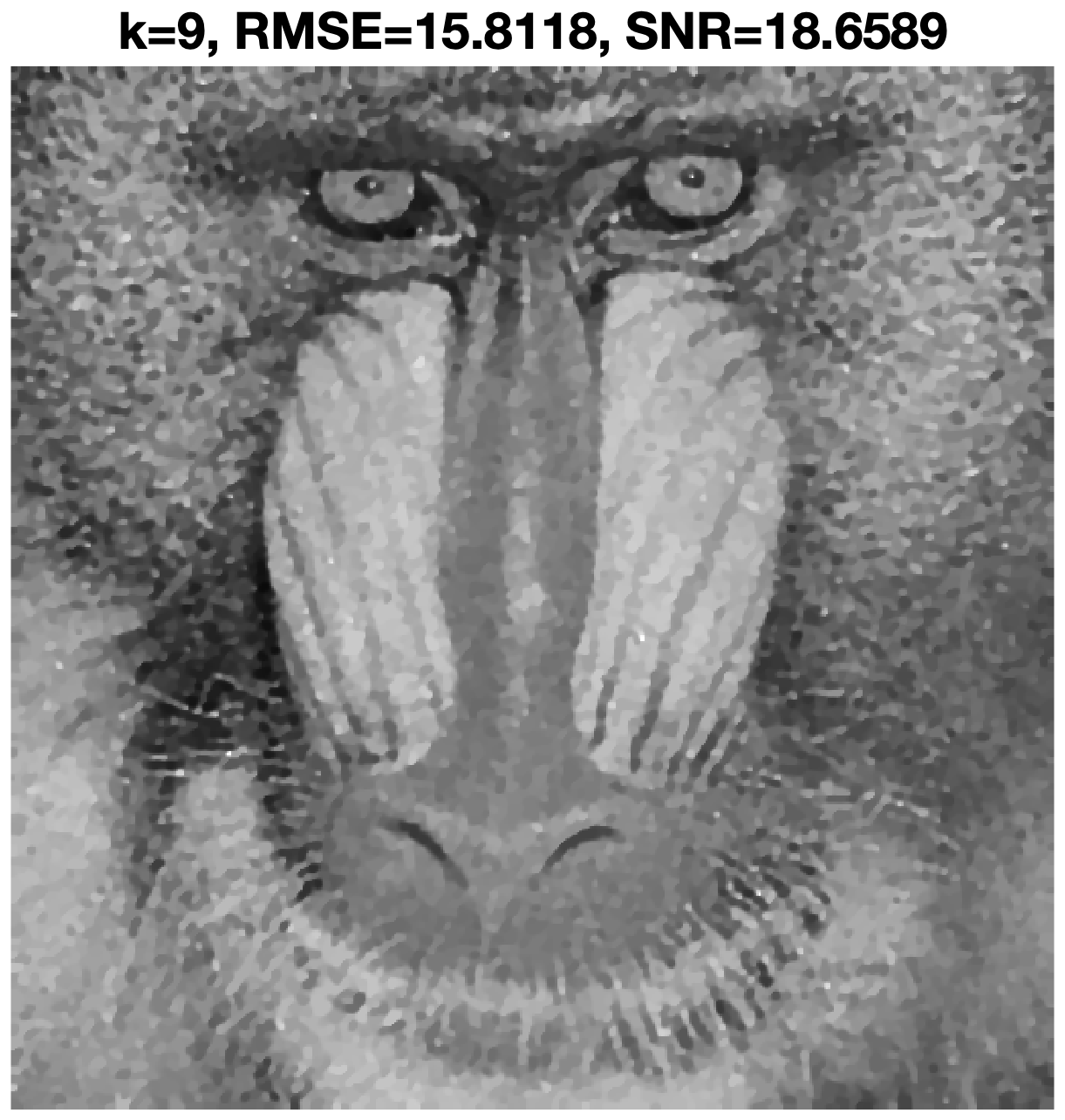}
		\end{subfigure}%
		\hspace*{\fill}
		\begin{subfigure}{0.24\textwidth}
			\includegraphics[width=\linewidth]{./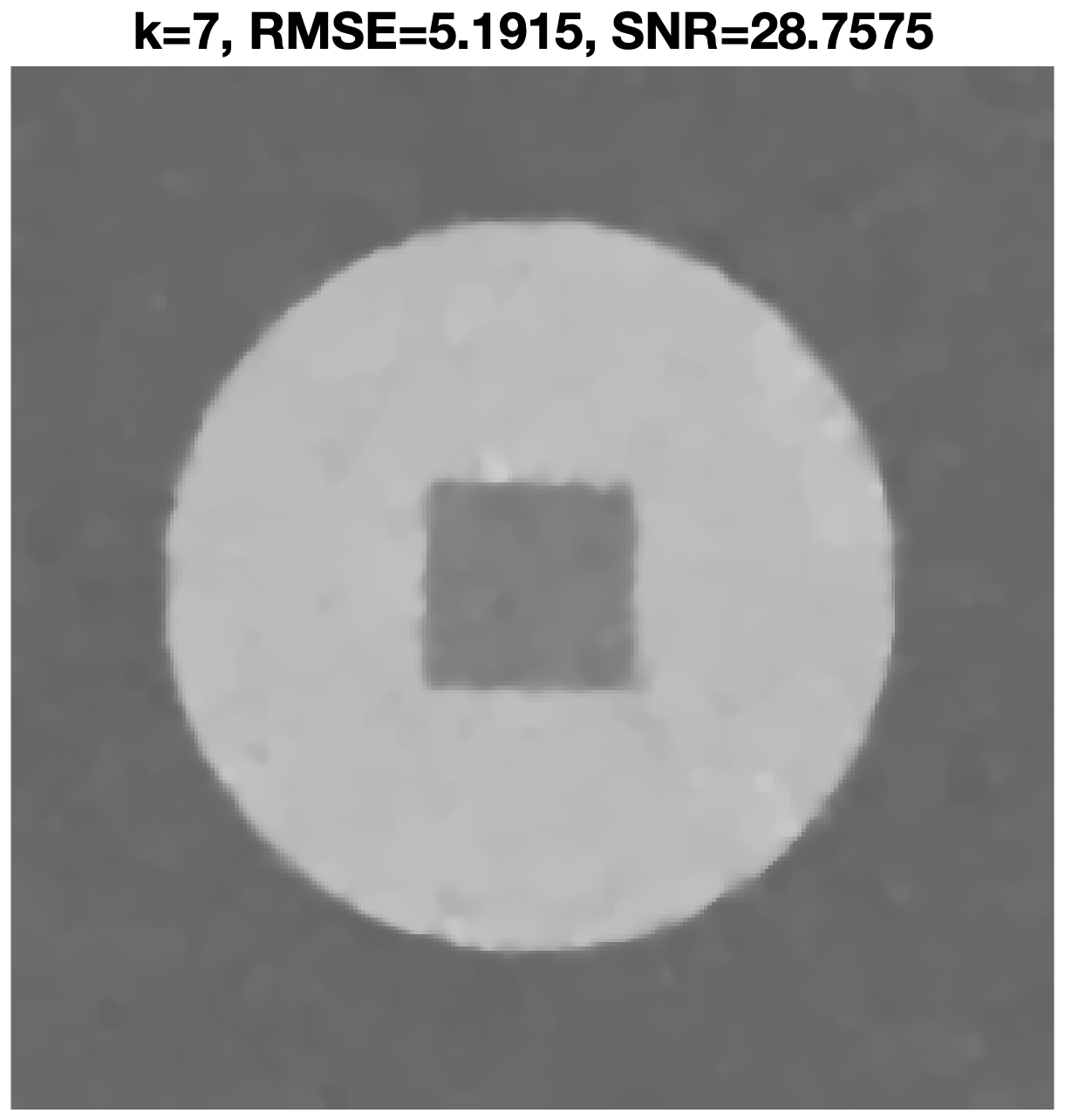}
		\end{subfigure}\\%
		\begin{subfigure}{0.24\textwidth}
			\includegraphics[width=\linewidth]{./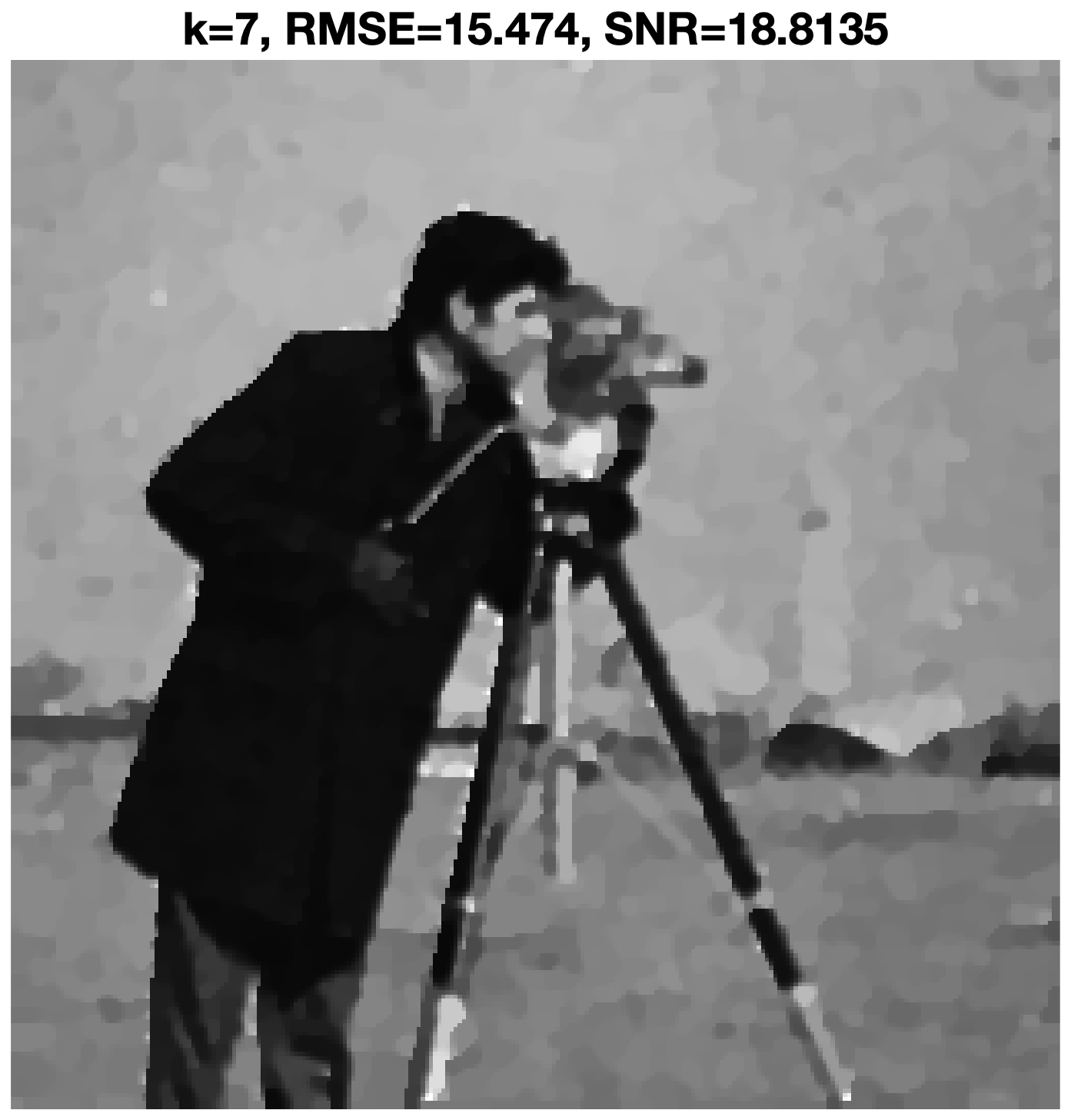}
		\end{subfigure}%
		\hspace*{\fill}  
		\begin{subfigure}{0.24\textwidth}
			\includegraphics[width=\linewidth]{./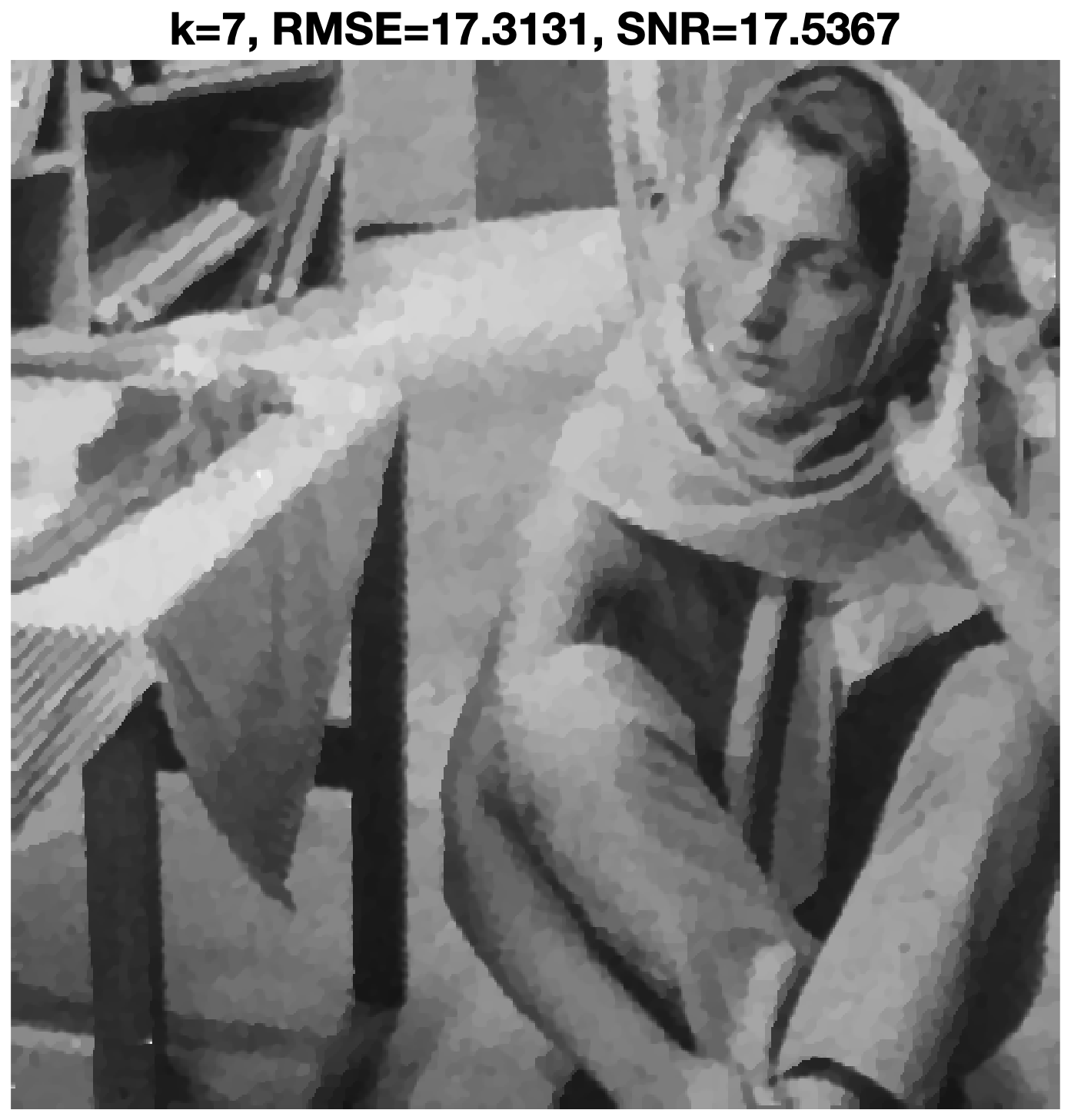}
		\end{subfigure}%
		\hspace*{\fill}   
		\begin{subfigure}{0.24\textwidth}
			\includegraphics[width=\linewidth]{./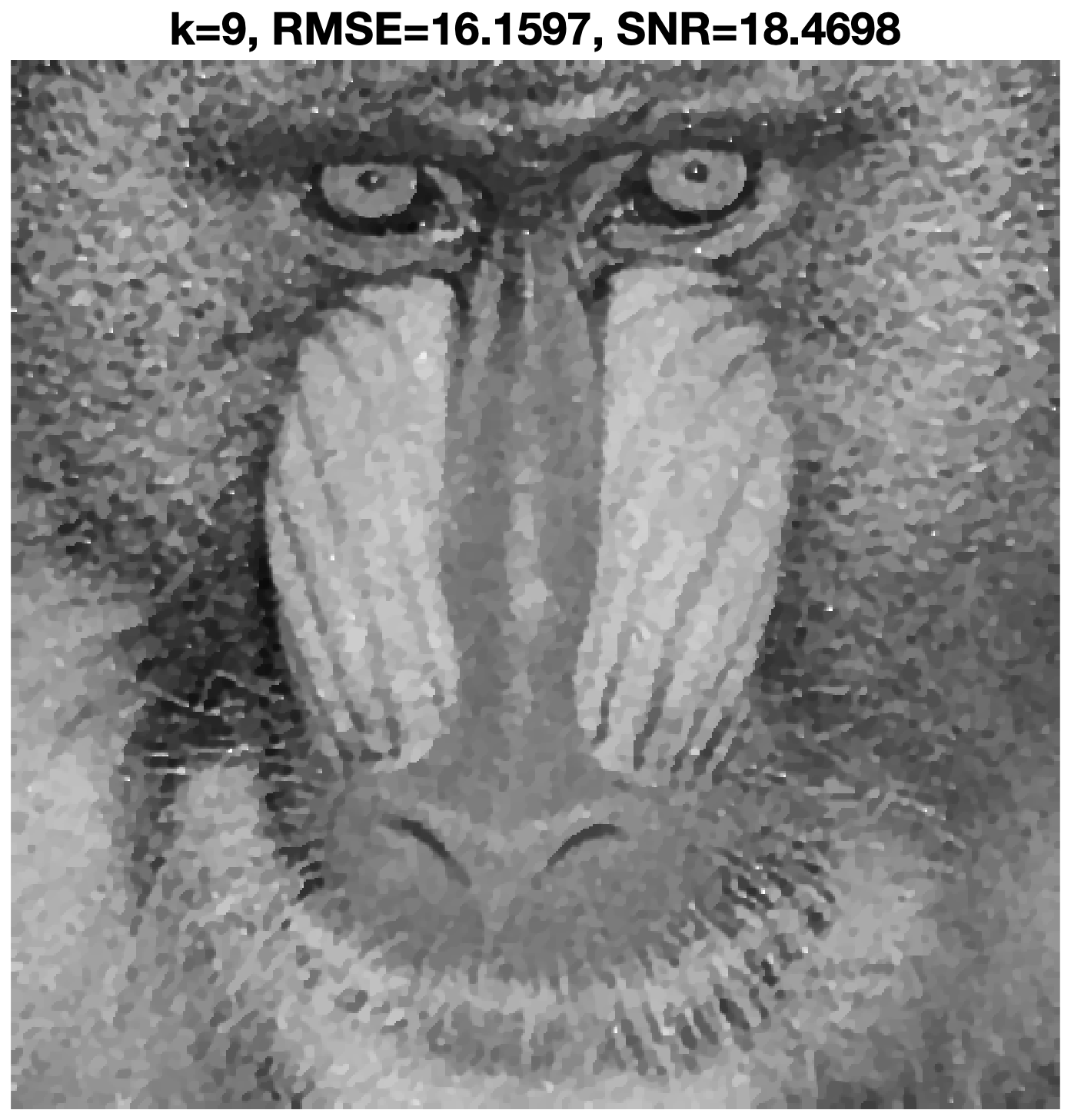}
		\end{subfigure}%
		\hspace*{\fill}
		\begin{subfigure}{0.24\textwidth}
			\includegraphics[width=\linewidth]{./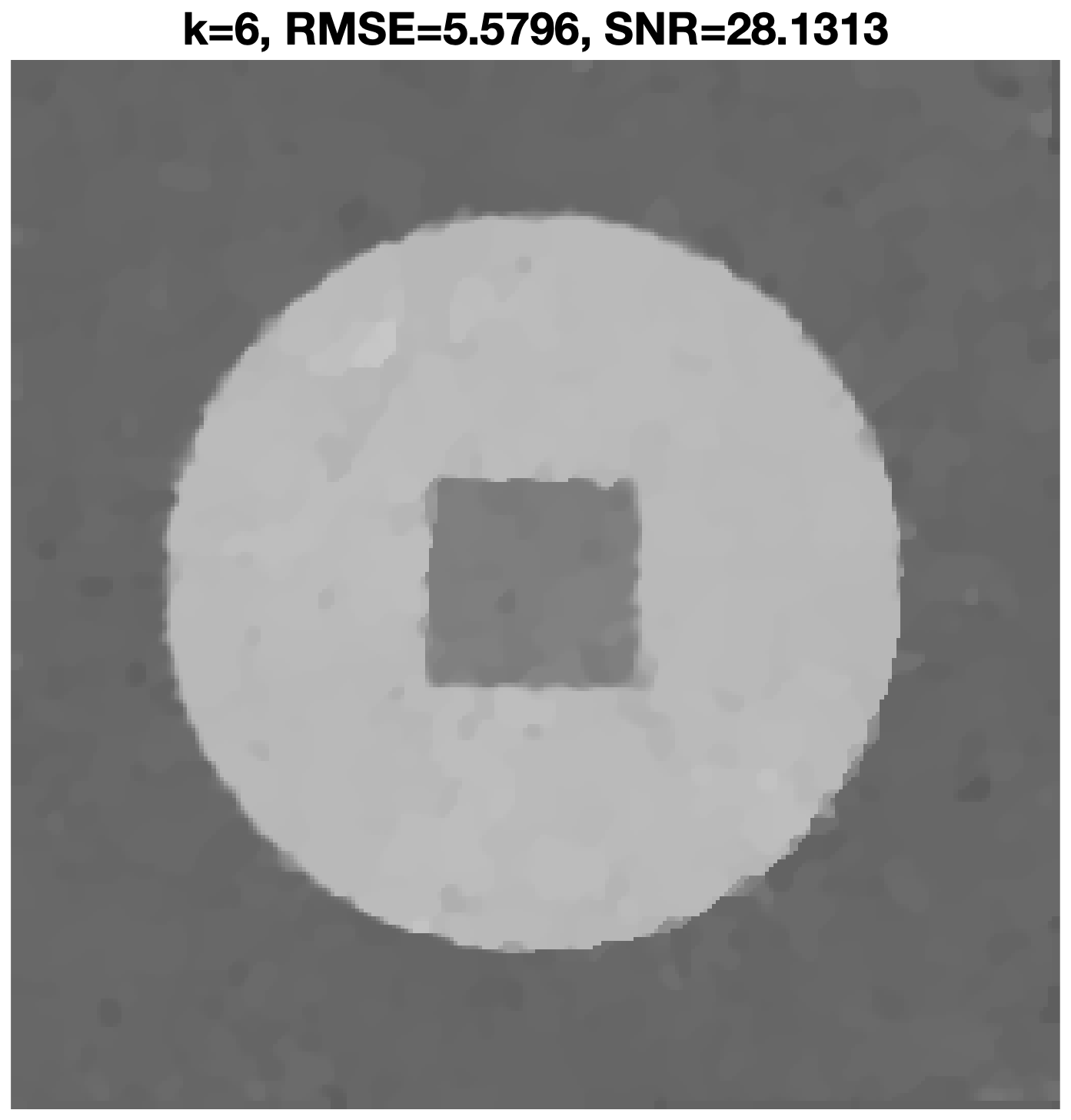}
		\end{subfigure}%
		\caption{AA-log denoised-deblurred images. Rows one, two and three are the regular, tight and refined AA-log MHDM models.} \label{fig:AA_blur}
	\end{figure}%
	\subsection{TNV-log models}
	\subsubsection{Denoising}
	The TNV-log model is an adaptation of the TNV model by using the $TV(\log(u))$ penalty. Recall that TNV is a multiscale procedure based on  \cite{rlo}, which aimed to recover a degraded image $f = u\cdot \eta$ by minimizing $TV(u)$ subject to the constraints $ \int f/u =1$ (mean) and $\int (f/u -1)^2 = \sigma^2$ (variance). Notice that no assumptions are required on the noise distribution (besides its mean and variance), so this method is suitable for more general multiplicative noise restorations. TNV \cite{tad_nez_ves2} dropped the mean constraint and converted the result to a multiscale method. We follow this lead while adding the contribution of a modified penalty. We omit a refined scheme because of the reduced performance from the weaker regularization $\|\log(\cdot)\|_*$, but here is hope, though, that other types of discretizations might provide better results in the refined case. Figure \ref{fig:ARO_restored} shows the TNV-log recoveries. While the method does not outperform the multiscale methods designed for gamma distributed noise, TNV-log performs well against the DZ model on highly textured images.
	
	The paper \cite{auber_aujol_2008} pointed out that empirically the RO model did not preserve the average image intensity, with the recoveries shifting to lighter---higher mean---values. 
	We check the mean intensity of the TNV-log ``Geometry" restorations and observe a similar, yet reduced trend, with only minor upward shifts in the average intensity of both the regular ($+3.3$) and tight ($+5.8$) methods compared with the original image (an 8-bit grayscale image).
	\begin{figure}[h]
		\centering
		\begin{subfigure}{0.24\textwidth}
			\includegraphics[width=\linewidth]{./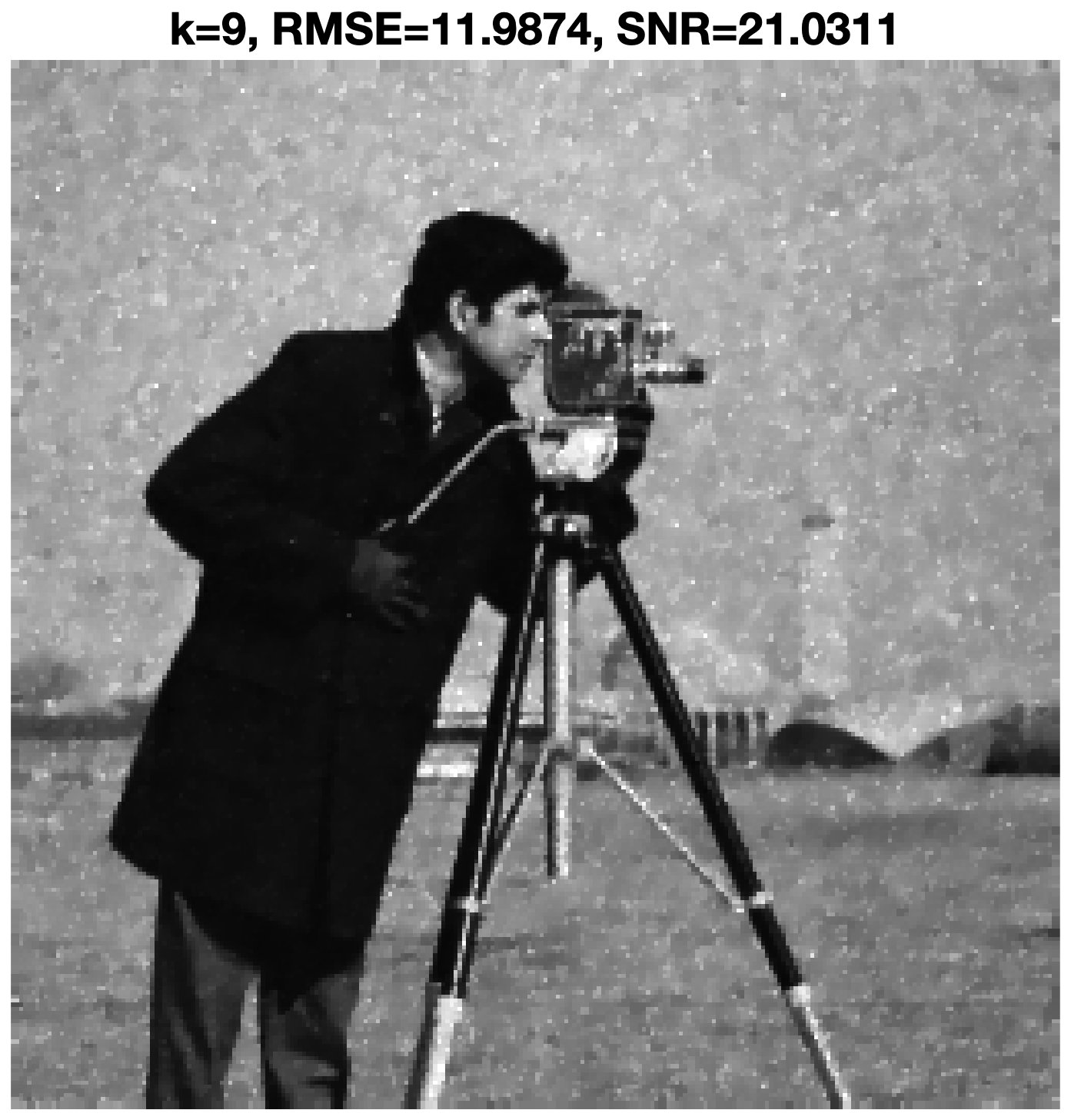}
		\end{subfigure}%
		\hspace*{\fill}  
		\begin{subfigure}{0.24\textwidth}
			\includegraphics[width=\linewidth]{./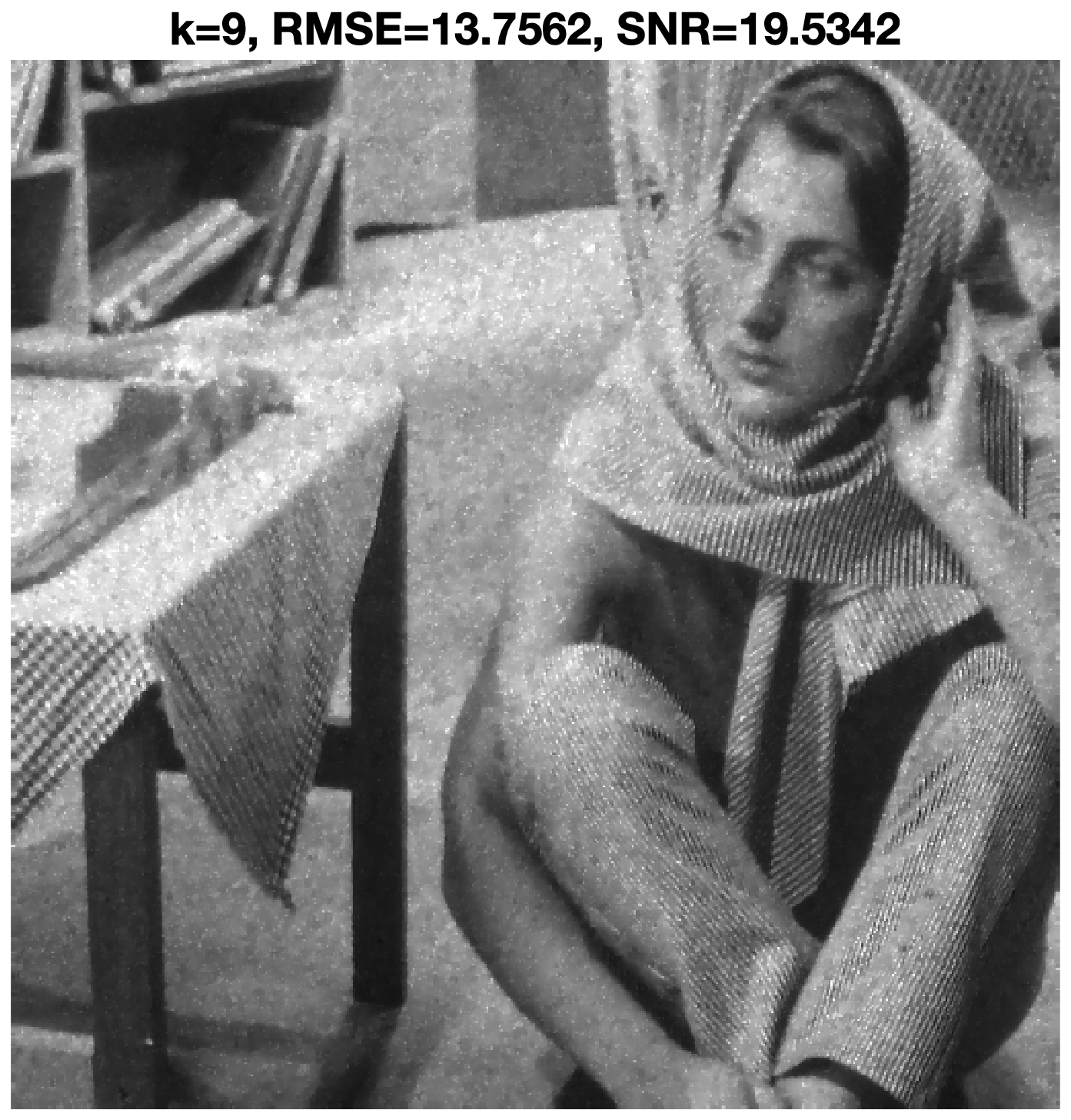}
		\end{subfigure}%
		\hspace*{\fill}   
		\begin{subfigure}{0.24\textwidth}
			\includegraphics[width=\linewidth]{./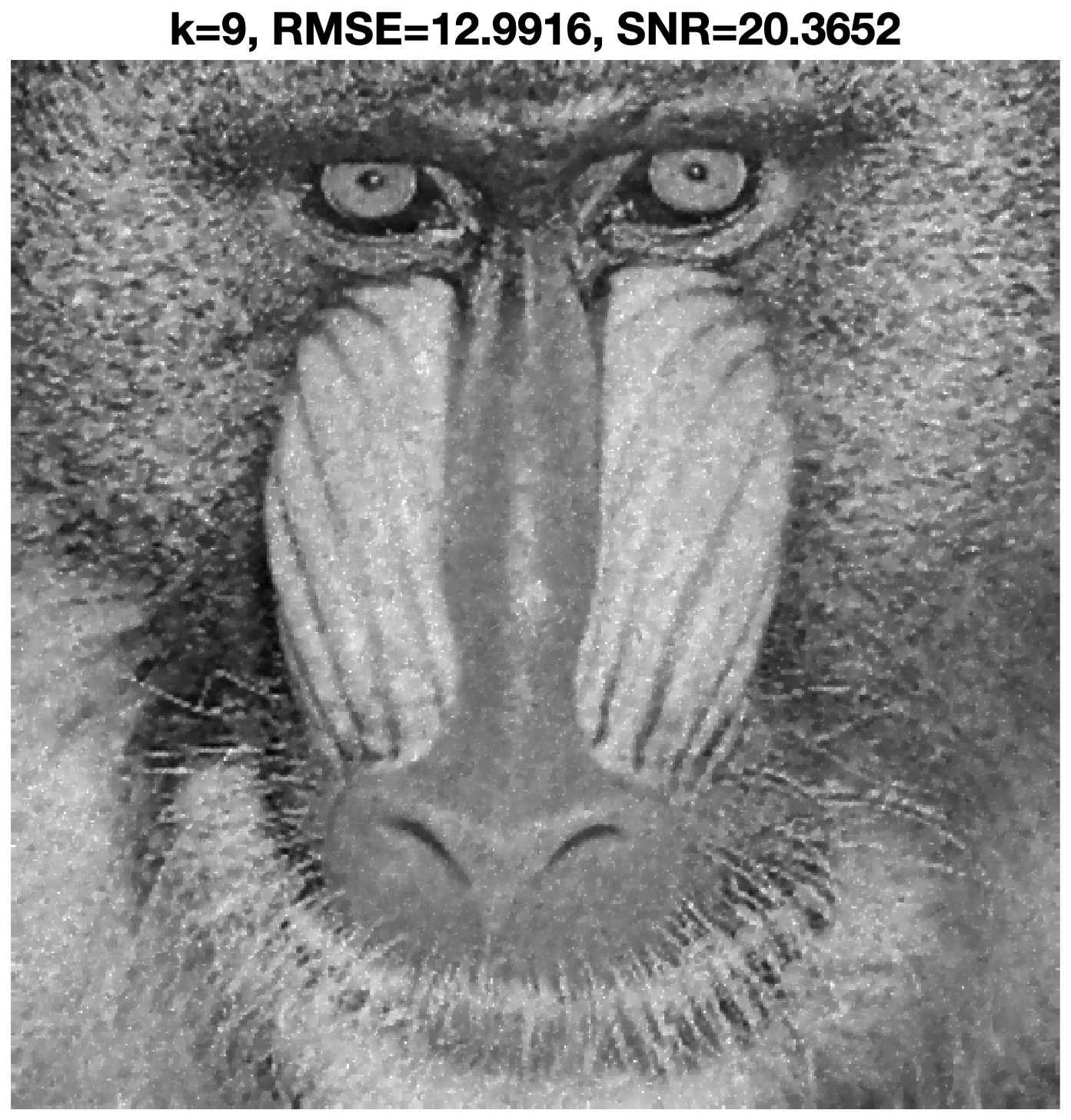}
		\end{subfigure}%
		\hspace*{\fill}
		\begin{subfigure}{0.24\textwidth}
			\includegraphics[width=\linewidth]{./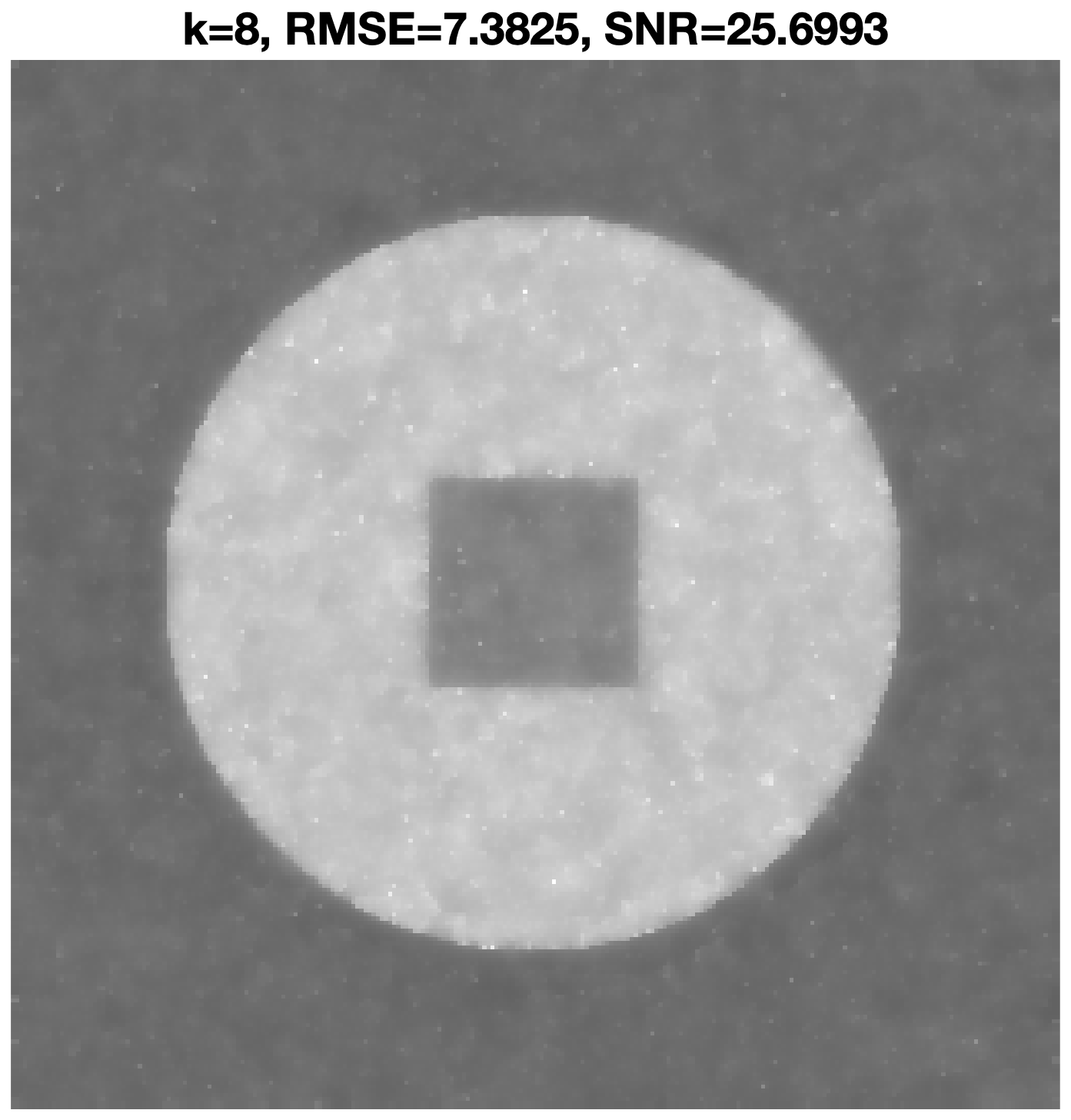}
		\end{subfigure}\\%
		\begin{subfigure}{0.24\textwidth}
			\includegraphics[width=\linewidth]{./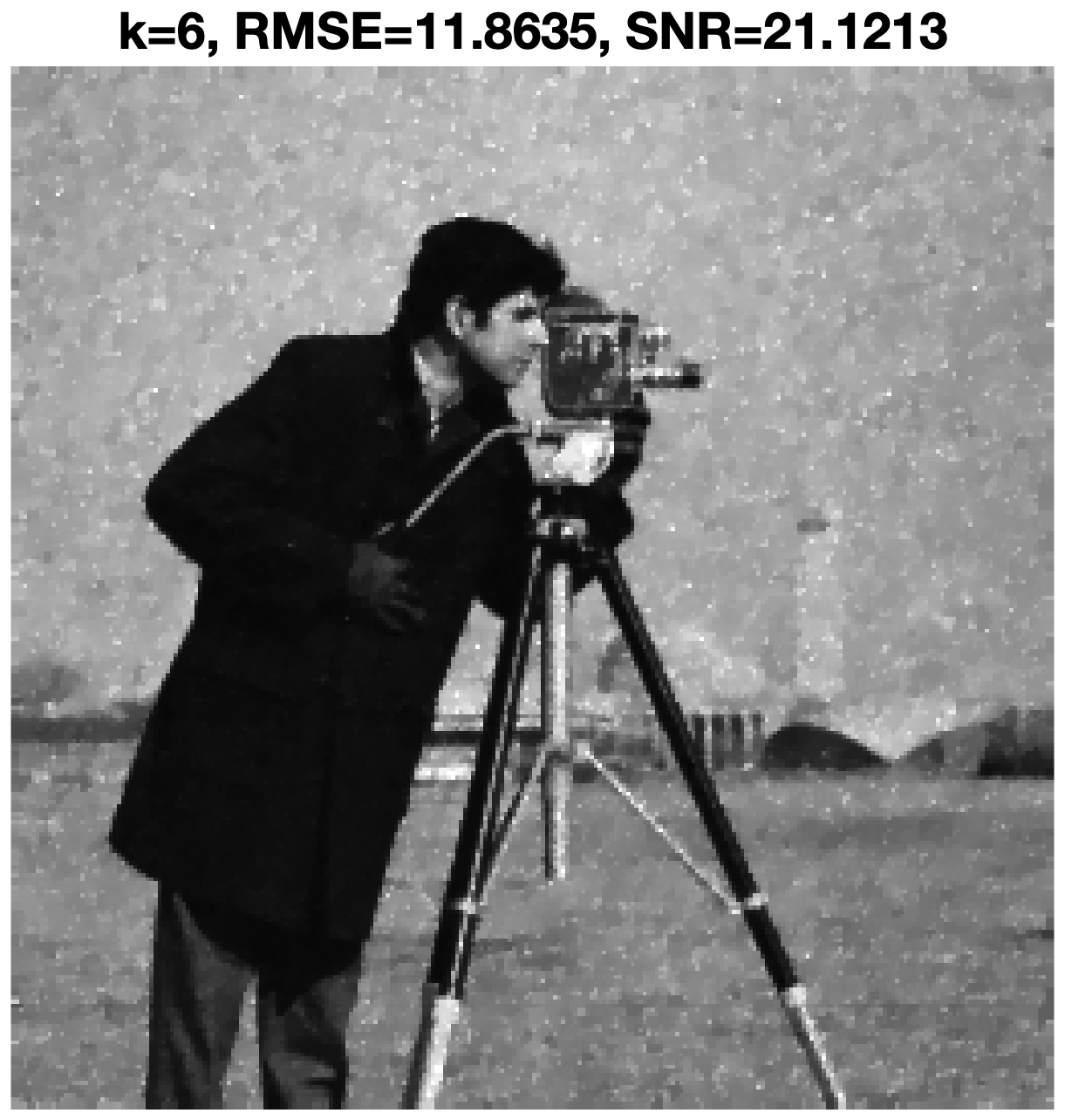}
		\end{subfigure}%
		\hspace*{\fill}  
		\begin{subfigure}{0.24\textwidth}
			\includegraphics[width=\linewidth]{./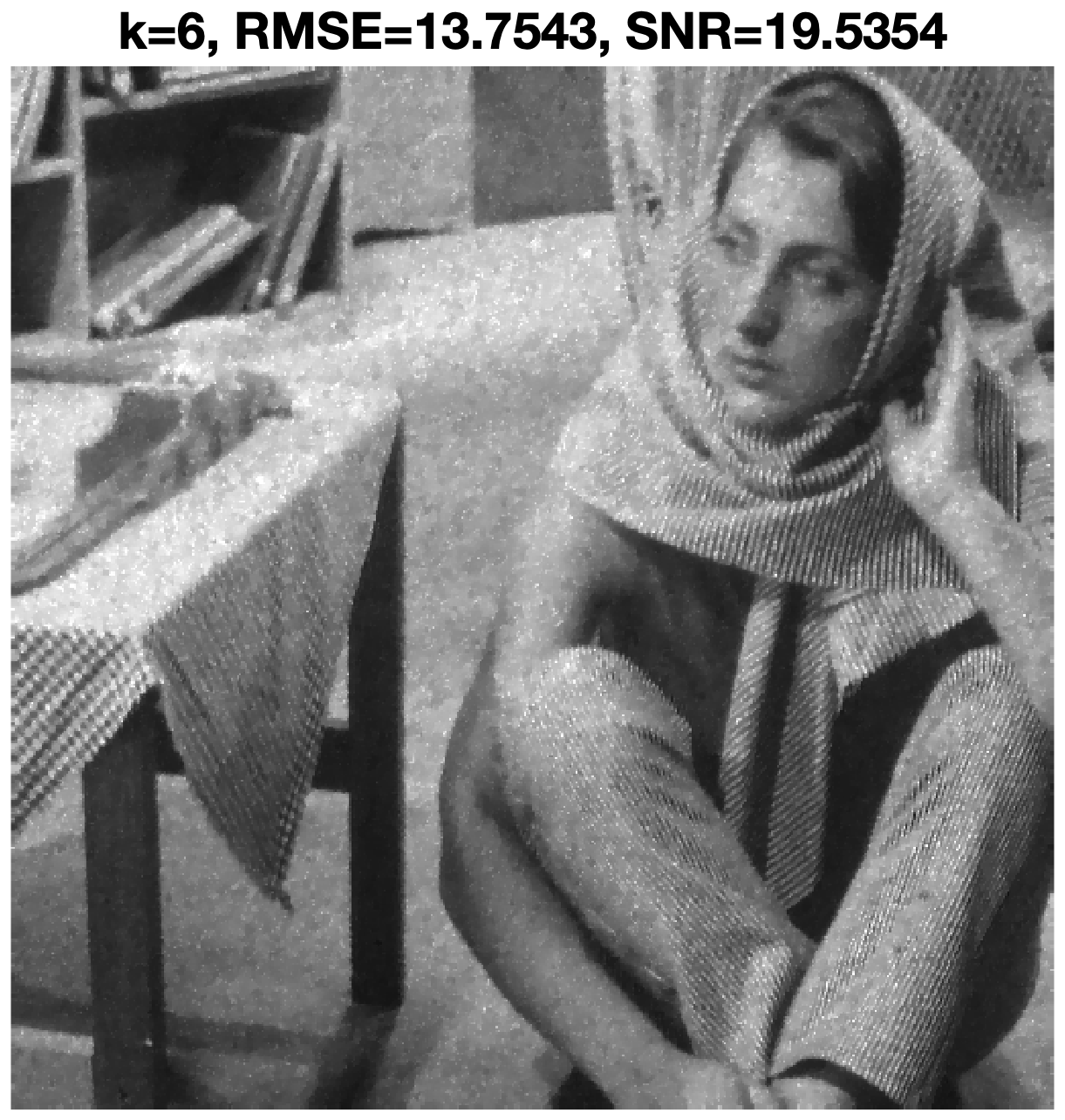}
		\end{subfigure}%
		\hspace*{\fill}   
		\begin{subfigure}{0.24\textwidth}
			\includegraphics[width=\linewidth]{./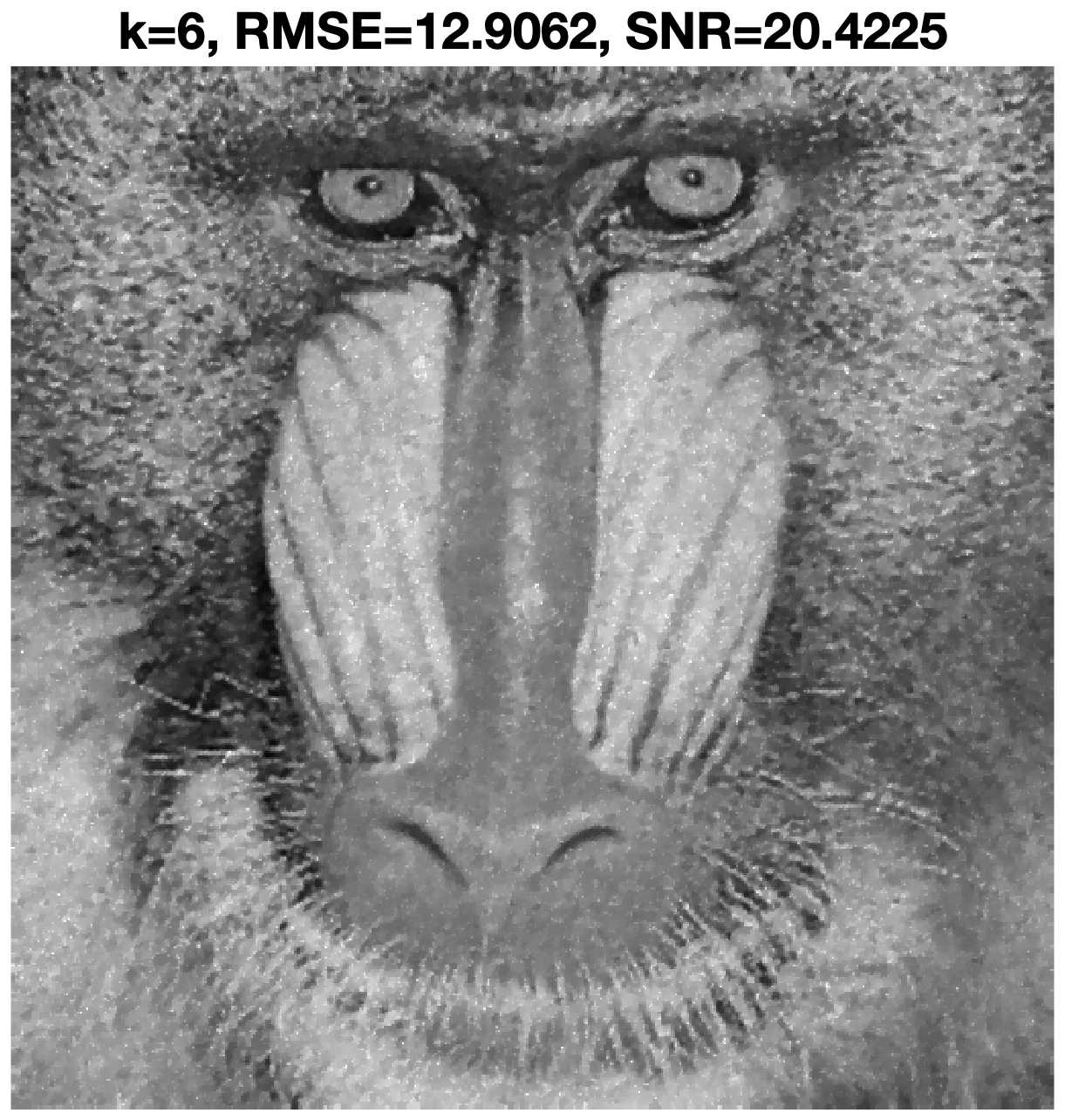}
		\end{subfigure}%
		\hspace*{\fill}
		\begin{subfigure}{0.24\textwidth}
			\includegraphics[width=\linewidth]{./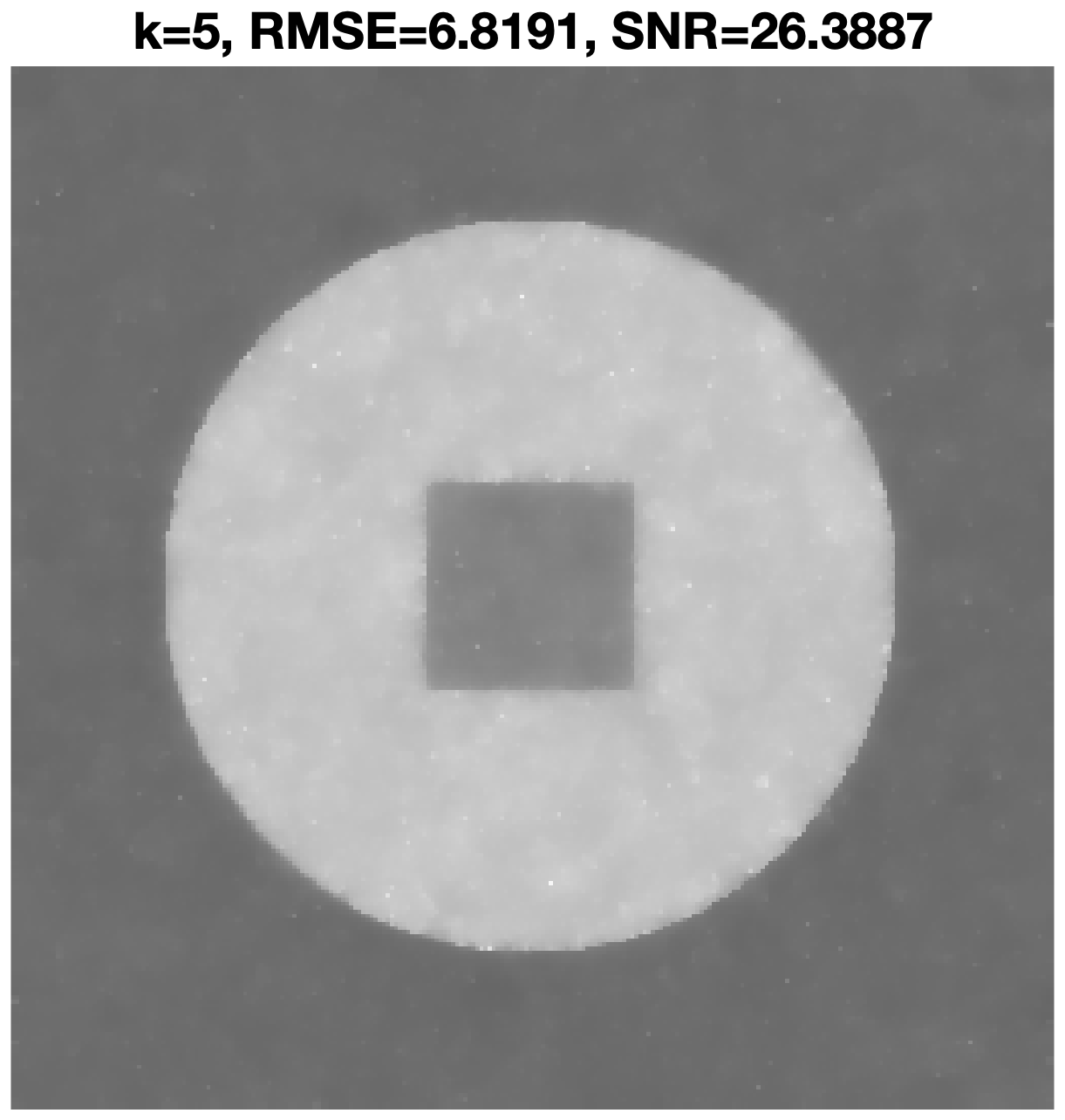}
		\end{subfigure}%
		\caption{TNV-log denoised images. Row one is the regular and row two is the tight recoveries.} \label{fig:ARO_restored}
	\end{figure}%
	
	\subsubsection{Denoising-deblurring}
	The TNV-log model can also tackle denoising-deblurring, as shown in Fig. \ref{fig:ARO_blur_restored}. We see that the performance is not as good as with the AA-log methods. For example, the edges are not that well preserved 
	and the ``Cameraman" image has more noise than the AA-log in Fig.\,\ref{fig:AA_blur}. Yet, this is expected for a technique not tailored for gamma noise.
	\begin{figure}[h]
		\centering
		\begin{subfigure}{0.24\textwidth}
			\includegraphics[width=\linewidth]{./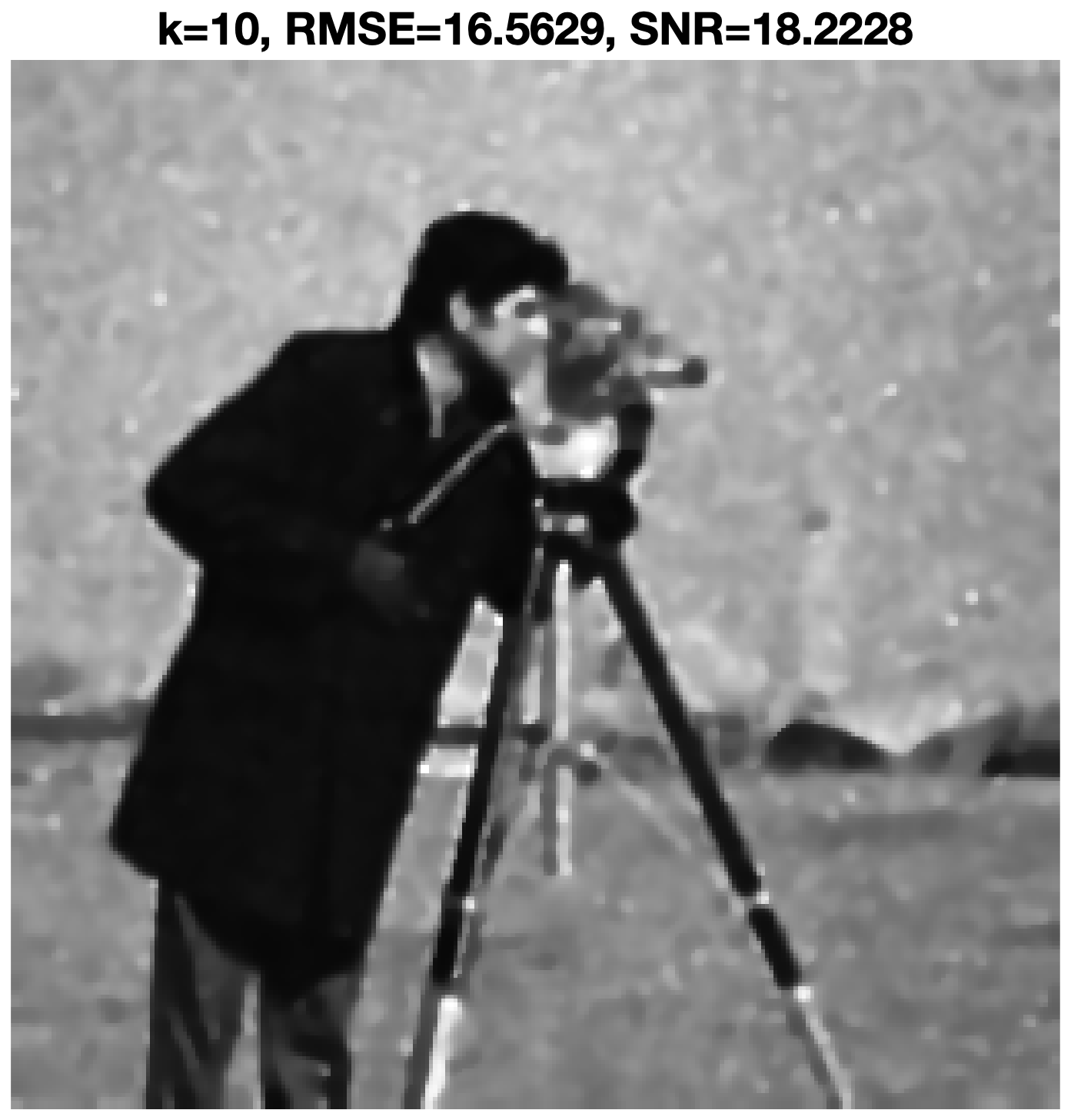}
		\end{subfigure}%
		\hspace*{\fill}%
		\begin{subfigure}{0.24\textwidth}
			\includegraphics[width=\linewidth]{./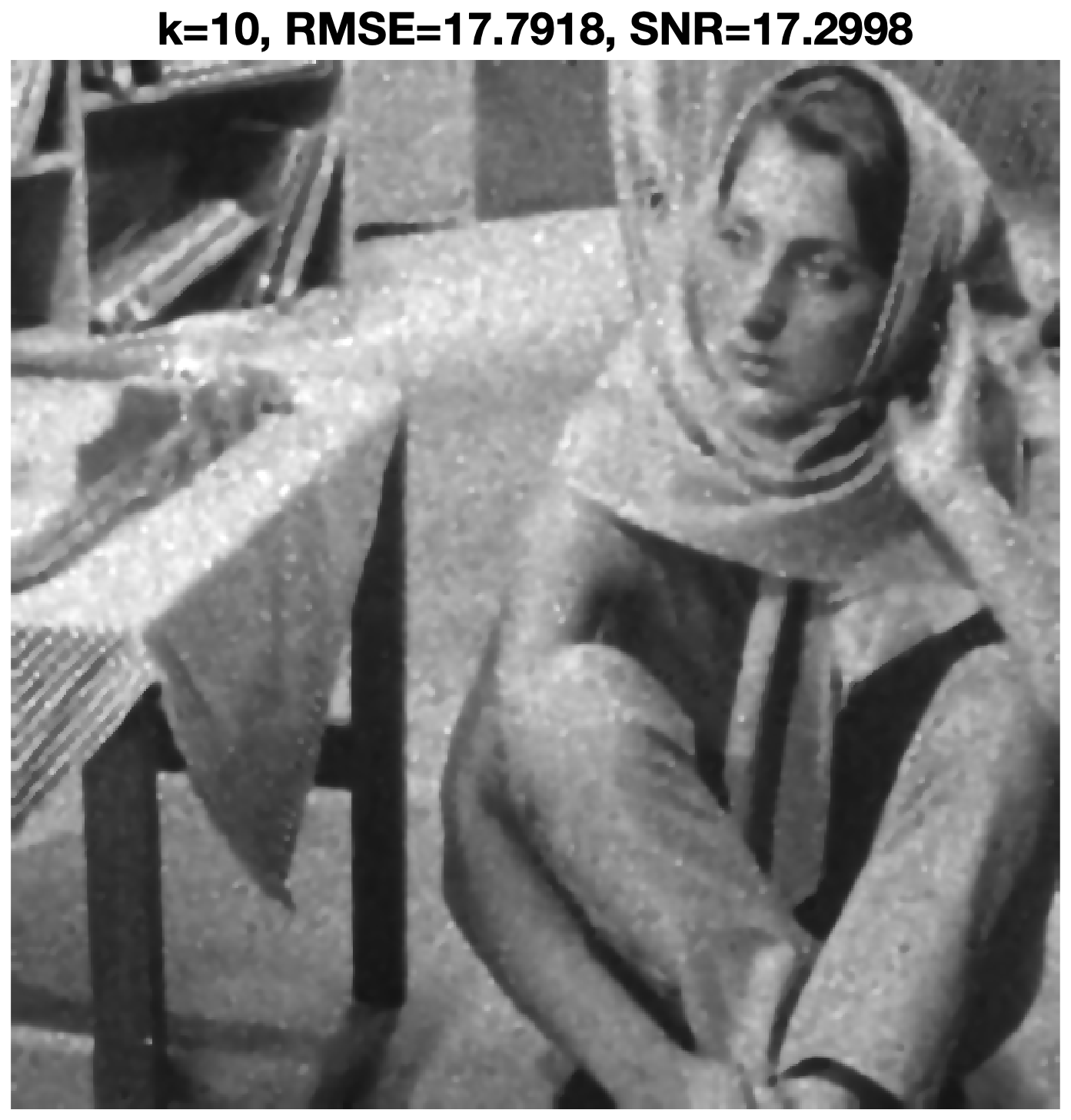}
		\end{subfigure}%
		\hspace*{\fill}
		\begin{subfigure}{0.24\textwidth}
			\includegraphics[width=\linewidth]{./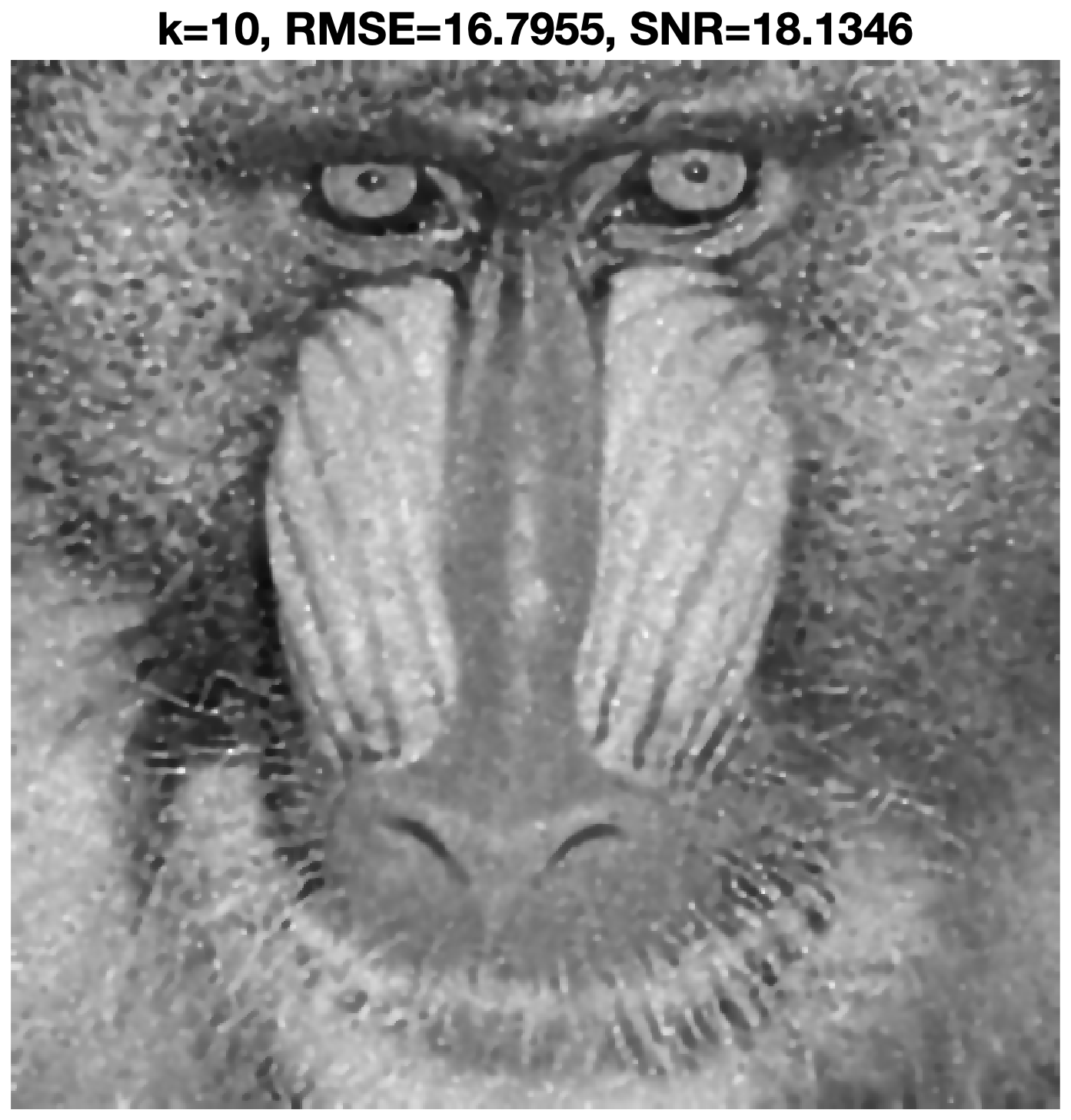}
		\end{subfigure}%
		\hspace*{\fill}%
		\begin{subfigure}{0.24\textwidth}
			\includegraphics[width=\linewidth]{./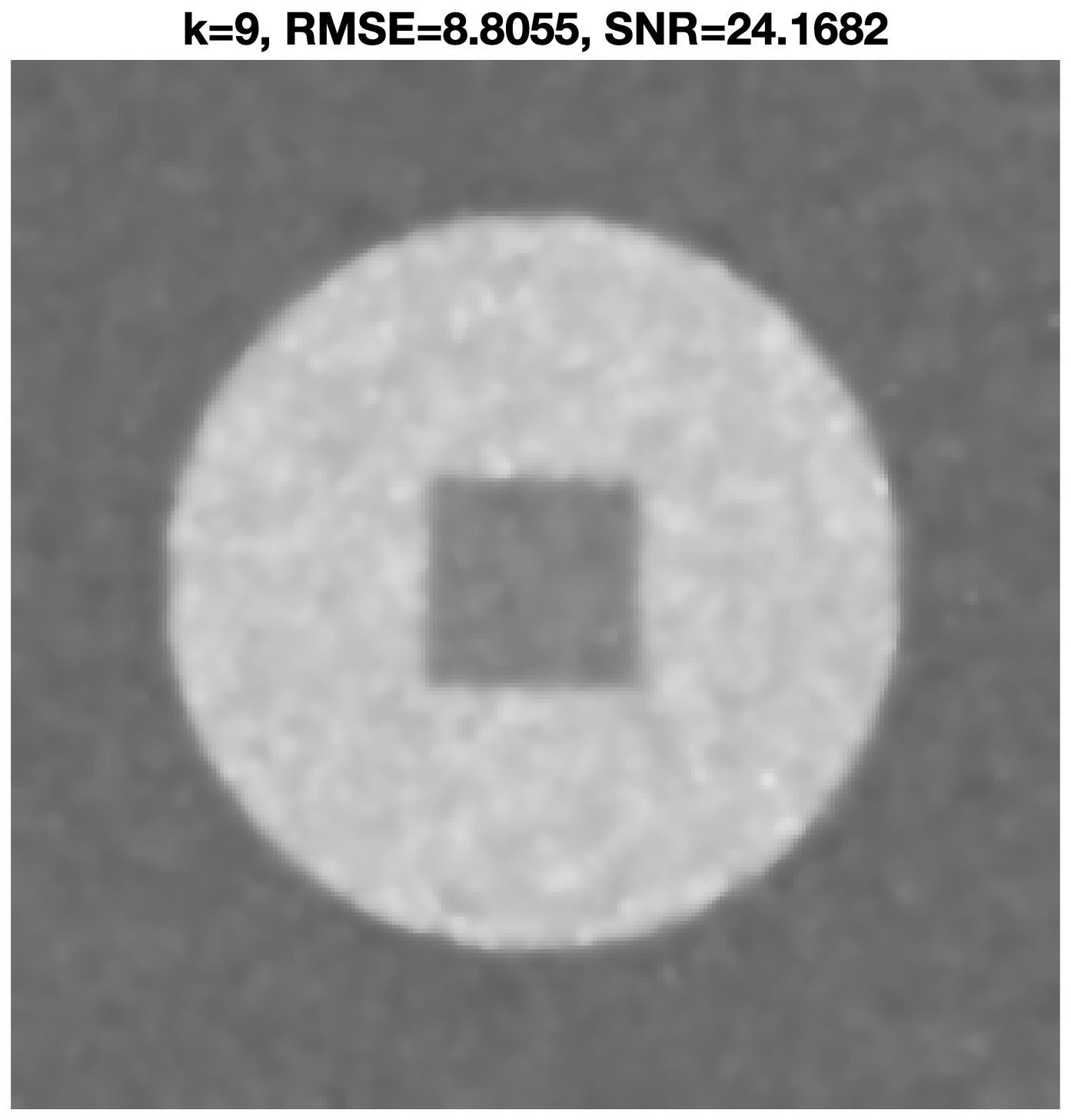}
		\end{subfigure}\\%
		\begin{subfigure}{0.24\textwidth}
			\includegraphics[width=\linewidth]{./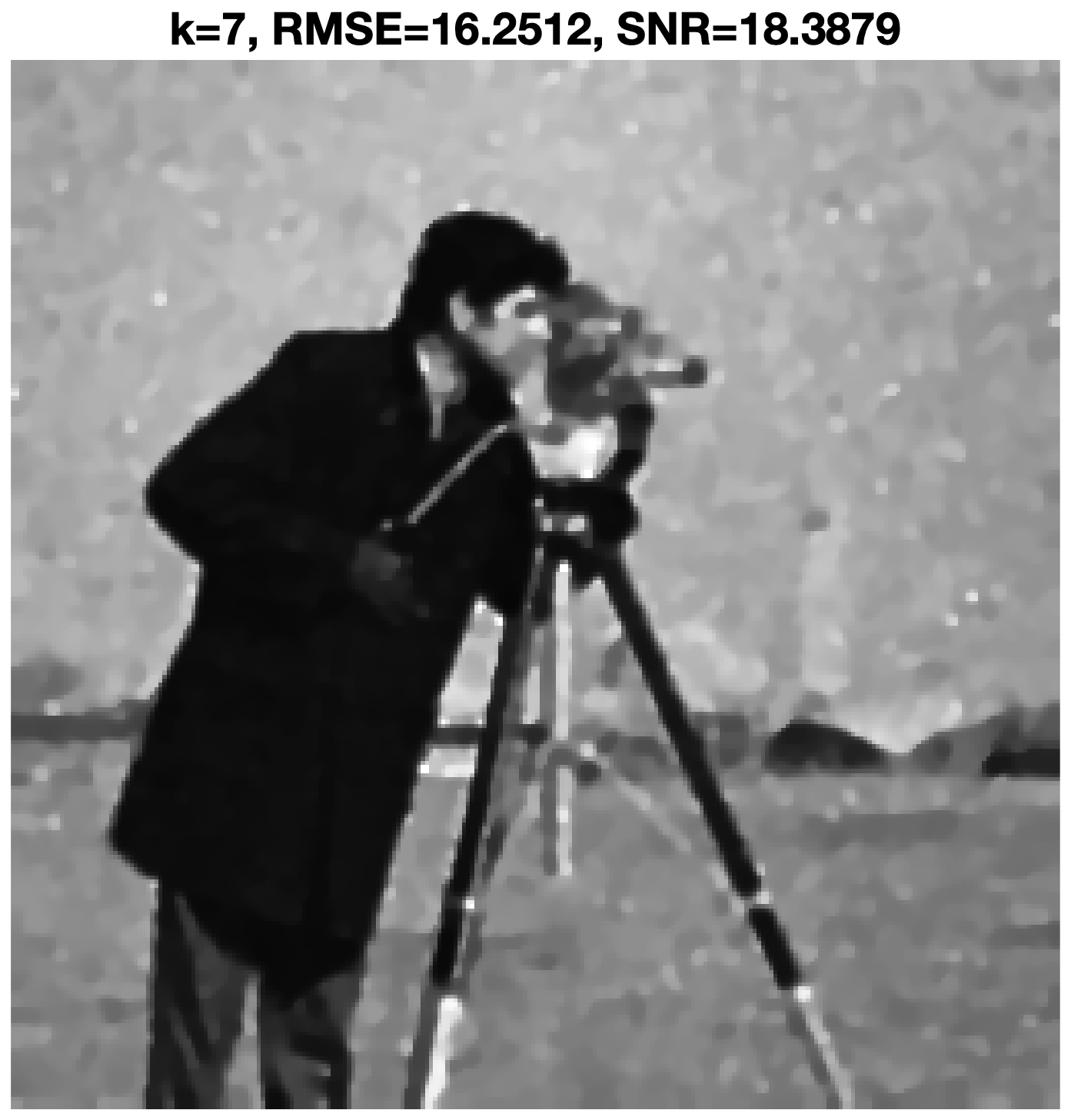}
		\end{subfigure}%
		\hspace*{\fill}%
		\begin{subfigure}{0.24\textwidth}
			\includegraphics[width=\linewidth]{./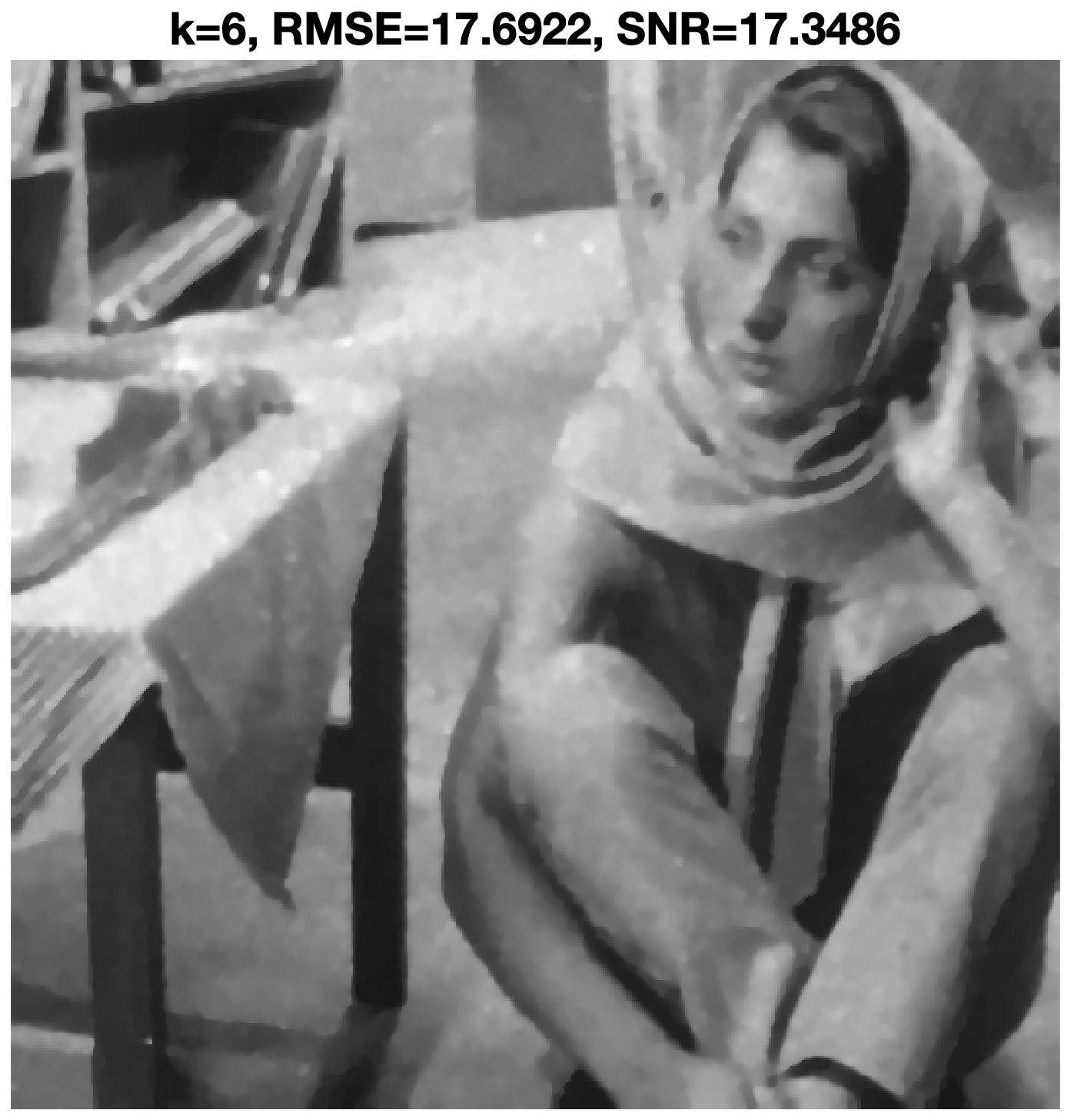}
		\end{subfigure}%
		\hspace*{\fill}
		\begin{subfigure}{0.24\textwidth}
			\includegraphics[width=\linewidth]{./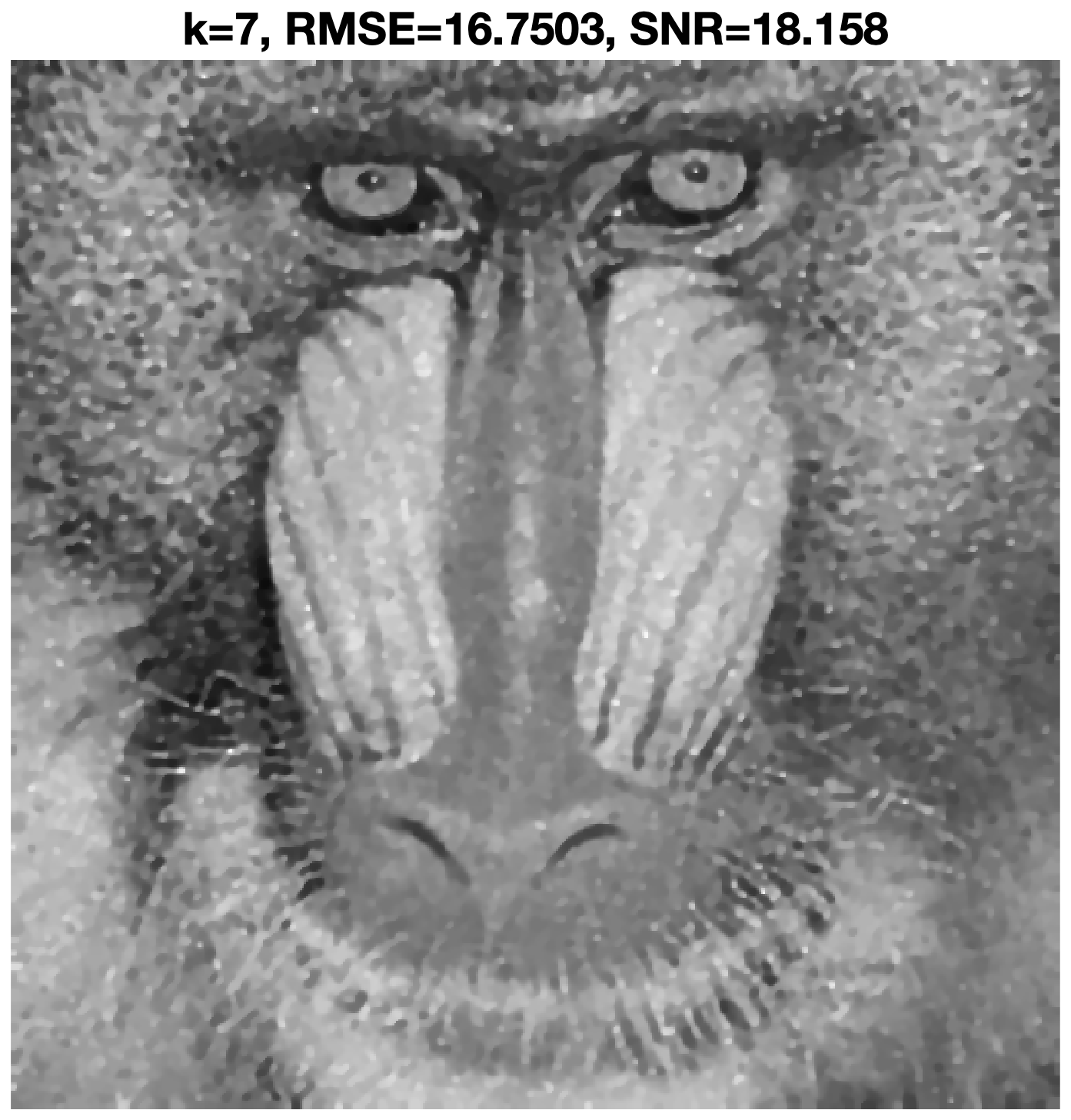}
		\end{subfigure}%
		\hspace*{\fill}%
		\begin{subfigure}{0.24\textwidth}
			\includegraphics[width=\linewidth]{./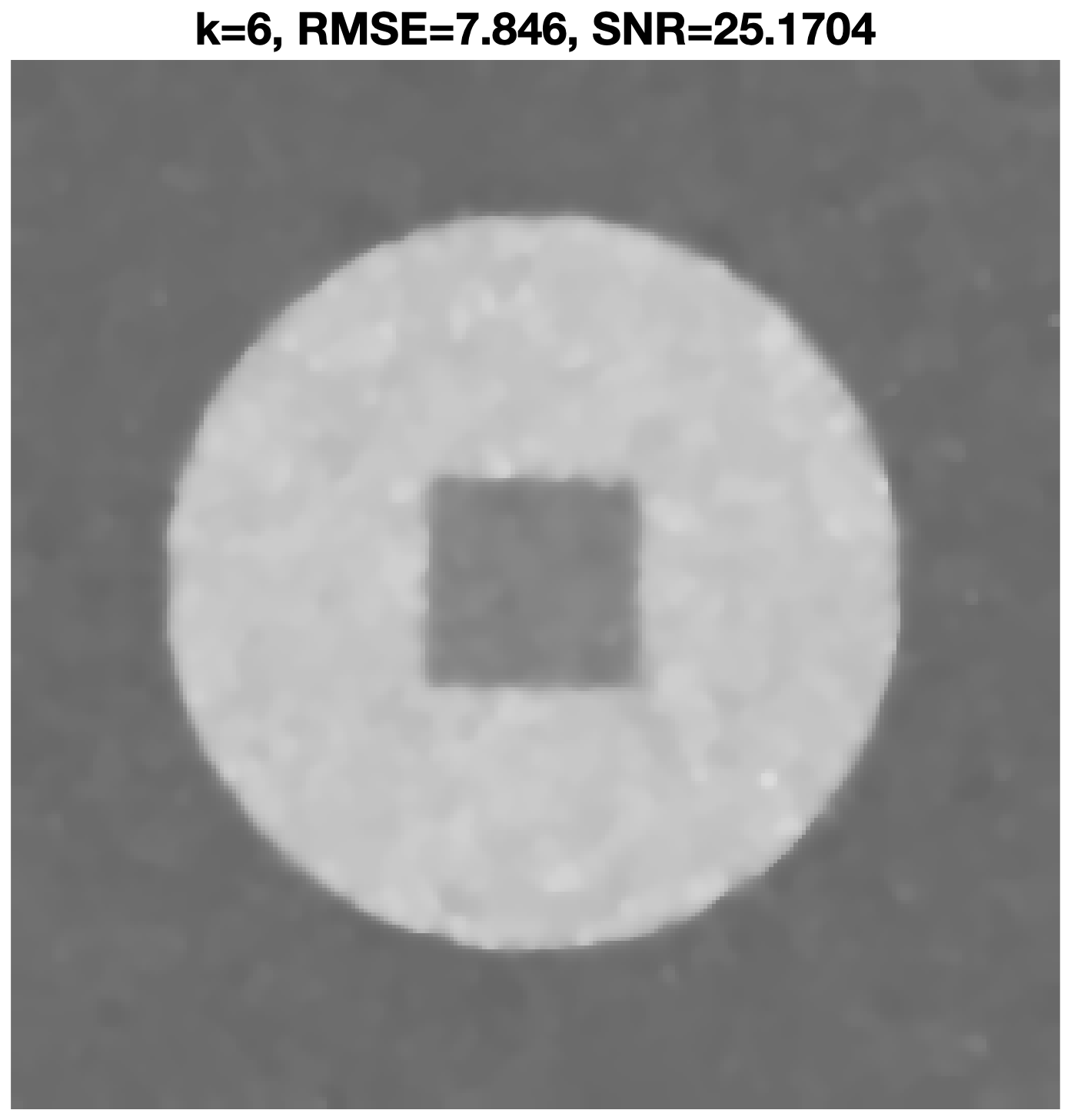}
		\end{subfigure}%
		\caption{TNV-log denoised-deblurred images: regular (row 1) and tight (row 2) recoveries.} \label{fig:ARO_blur_restored}
	\end{figure}%
	\subsection{Comparisons} \label{sec:comparisons}
	Having seen the individual recoveries across the proposed methods, we now compare them against one another, and also with the existing TNV \cite{tad_nez_ves2} and DZ \cite{dong_2013} models for both denoising and deblurring tasks. For DZ, we choose $\alpha = 16$ in \eqref{eq:dong_2013} for all images and select the best weight $\lambda$ by means of a grid search (see Fig.\,\ref{fig:dong_tnv}).
	
	\subsubsection{Sensitivity to Initialization}
	To aid the reader in using the MHDM procedure, we briefly discuss a heuristic for initializing each method  and provide some intuition for these choices. The AA-log MHDM methods are not convex, and consequently initialization can affect recoveries. We find that the penalty minimizing initializations, which generally are smoother (see Subsection \ref{sec:inits}), work well for the regular and tight AA-log MHDM schemes, while the fidelity minimizing initialization $\fd/x_{k-1}$ produces poor restorations and appears to be near a suboptimal local minimum. The refined AA-log MHDM method, however, does well with the less smooth fidelity minimizing initializations, likely because the $*$-norm is finite only for zero-mean functions (see Subsection \ref{sec:inits}).
	
	The SO MHDM methods are convex, and as a result exhibit much less dependence on initialization. All SO methods perform well with both the penalty and fidelity minimizing initializations. We do see a slight improvement following the convention employed for the AA-log MHDM and use this pattern for the results in this work. 
	
	Like the AA-log MHDM methods, the TNV-log schemes are not convex. We observe a similar trend to the AA-log method's dependence on initialization, preferring the smoother penalty minimizing initialization for the TNV-log regular and tight schemes. 
	\subsubsection{Denoising}
	We present the TNV and the DZ recoveries in Fig.\,\ref{fig:dong_tnv}. The DZ model recovers the smoother, more cartoon image features well, but fails to capture the textural details in the way a multiscale method like the TNV model does. However, the latter suffers from restoring perhaps too much noise within smoother images. 
	%
	\begin{figure}[h!]
		\centering
		\begin{subfigure}{0.24\textwidth}
			\includegraphics[width=\linewidth]{./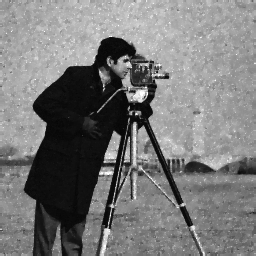}
			\caption{RMSE=11.86, SNR=21.12}%
		\end{subfigure}%
		\hspace*{\fill}%
		\begin{subfigure}{0.24\textwidth}
			\includegraphics[width=\linewidth]{./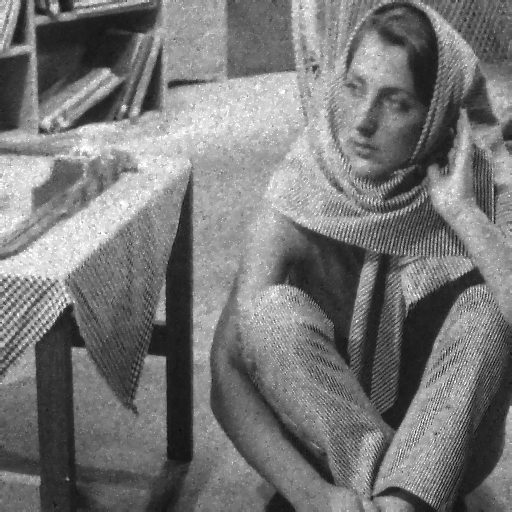}
			\caption{RMSE=13.46, SNR=19.73}%
		\end{subfigure}%
		\hspace*{\fill}%
		\begin{subfigure}{0.24\textwidth}
			\includegraphics[width=\linewidth]{./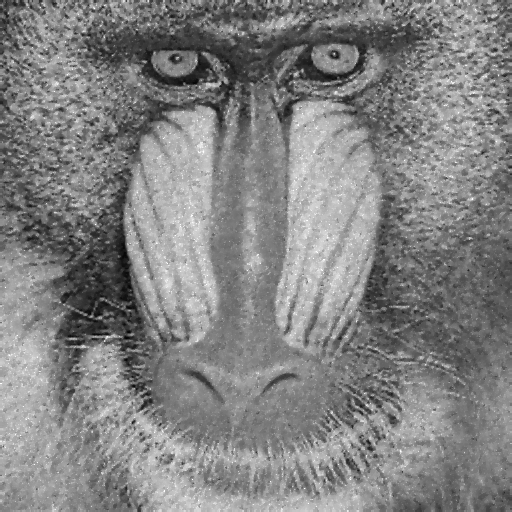}
			\caption{RMSE=12.93, SNR=20.40}%
		\end{subfigure}%
		\hspace*{\fill}
		\begin{subfigure}{0.24\textwidth}
			\includegraphics[width=\linewidth]{./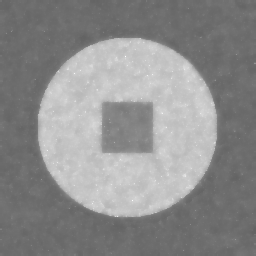}
			\caption{RMSE=7.183, SNR=25.94}%
		\end{subfigure}\\%
		\begin{subfigure}{0.24\textwidth}
			\includegraphics[width=\linewidth]{./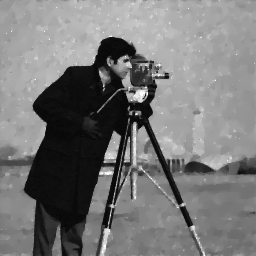}
			\caption{RMSE=10.87, SNR=21.87}
		\end{subfigure}%
		\hspace*{\fill}%
		\begin{subfigure}{0.24\textwidth}
			\includegraphics[width=\linewidth]{./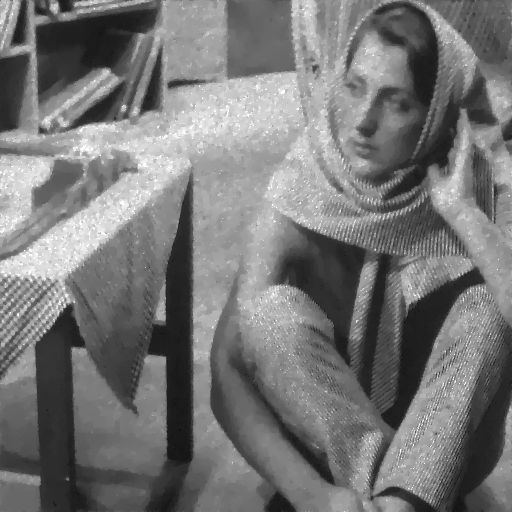}
			\subcaption{RMSE=14.37, SNR=19.13}
		\end{subfigure}%
		\hspace*{\fill}
		\begin{subfigure}{0.24\textwidth}
			\includegraphics[width=\linewidth]{./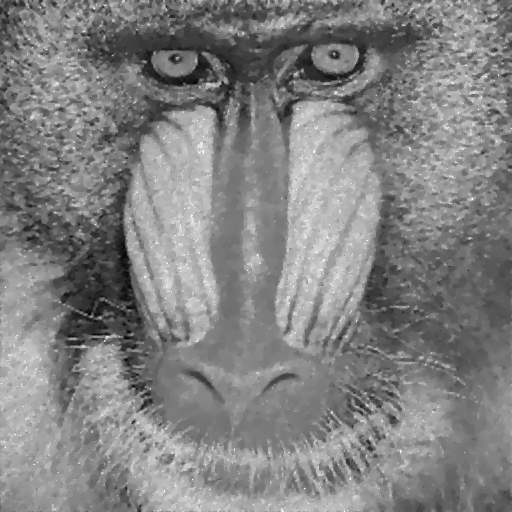}
			\caption{RMSE=13.16, SNR=20.22}
		\end{subfigure}%
		\hspace*{\fill}%
		\begin{subfigure}{0.24\textwidth}
			\includegraphics[width=\linewidth]{./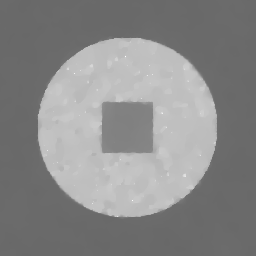}
			\caption{RMSE=3.87, SNR=31.32} 
		\end{subfigure}%
		\caption{Row one: TNV recoveries at $k_{min} = 9$ for all images except ``Geometry", for which $k_{min} = 8$. Row two: DZ model denoised images. Parameters for the DZ model recoveries (refer to \eqref{eq:dong_2013}): (Cameraman) $\lambda = 0.06$, (Barbara) $\lambda = 0.05$, (Mandril) $\lambda = 0.05$, and (Geometry) $\lambda = 0.13$. For all images, $\alpha = 16$.} \label{fig:dong_tnv}
	\end{figure}%
	We examine our proposed MHDM schemes against the TNV and DZ models. To easily compare across all models, we list the SNR values of the denoising restorations at $k_{min}$ and $k^*$ in Table \ref{tab:SNRcomparisons}. We note that the refined SO MHDM recovery performs best on images with more detail and texture (``Barbara" and ``Mandril"), while the ADMM schemes recover best those with larger smooth regions (``Cameraman" and ``Geometry"). 
	
	Figure \ref{fig:comparisons} gives a detailed crop of each method's recovery for the test images. The multiscale reconstruction's ability to recover greater texture is clear in the ``Barbara" recovery, where one can see that the DZ recovery flattens and mutes the details. The SO MHDM (ADMM) schemes are very effective at removing noise in smooth regions (``Geometry"). In a more textured image such as the ``Mandril", we remark the importance of the tight adjustment preventing over-smoothing, as for example, is seen with the ``Mandril" image's nose for SO MHDM (ADMM).
	
	For greater comparison, we also note that the MHDM schemes handle higher noise levels quite well ($g(x;10)$, standard deviation 0.32), as shown for a few methods in Fig.\,\ref{fig:mid_noise_compare}, in comparison with the DZ recovery. Importantly, $SNR$ and $RMSE$ are improved, 
	and texture is retained throughout the image when compared with the DZ method.
	\begin{figure}[h]
		\centering
		\begin{subfigure}{0.24\textwidth}
			\includegraphics[width=\linewidth]{./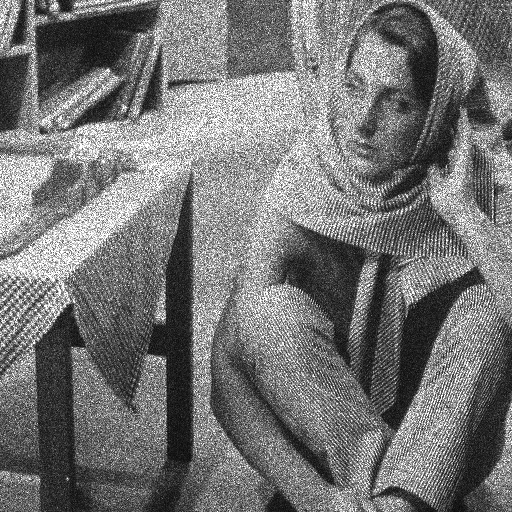}
		\end{subfigure}%
		\hspace*{\fill}%
		\begin{subfigure}{0.24\textwidth}
			\includegraphics[width=\linewidth]{./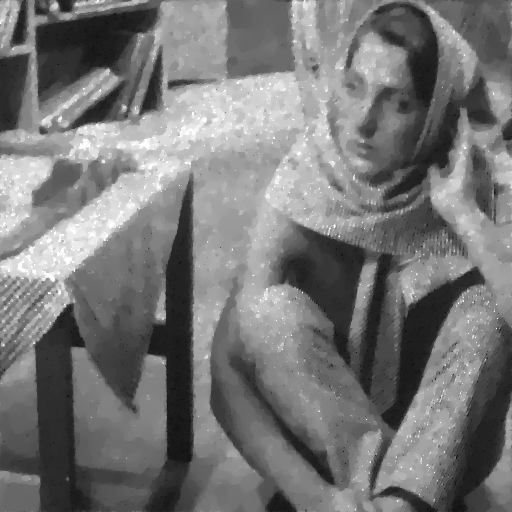}
		\end{subfigure}%
		\hspace*{\fill}%
		\begin{subfigure}{0.24\textwidth}
			\includegraphics[width=\linewidth]{./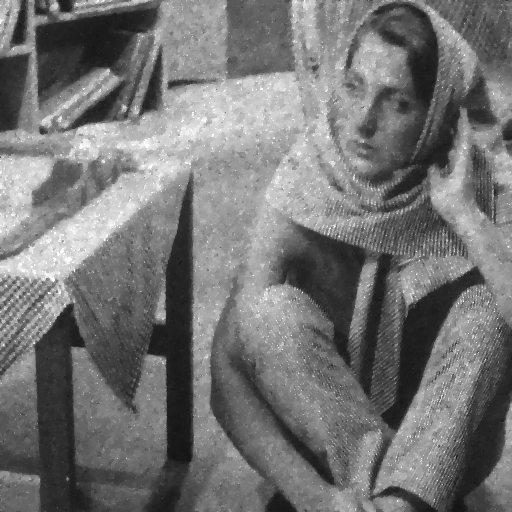}
		\end{subfigure}%
		\hspace*{\fill}%
		\begin{subfigure}{0.24\textwidth}
			\includegraphics[width=\linewidth]{./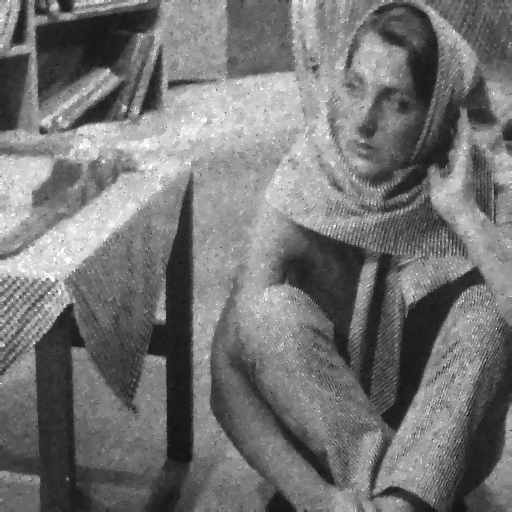}
		\end{subfigure}%
		\caption{Recoveries from high noise (standard deviation 0.32). From left to right: noisy image ($RMSE = 41.25$, $SNR = 10.00$), DZ model ($RMSE = 17.5233$, $SNR= 17.44 $, $\lambda = 0.08$), the SO MHDM tight ($k_{min} =6  $, $RMSE = 16.37$, $SNR = 18.03 $) and AA-log MHDM tight recoveries ($k_{min} = 6$,$RMSE = 16.38 $,  $SNR = 18.02$).}
		\label{fig:mid_noise_compare}
	\end{figure}
	\begin{table}[htbp]
		\centering
		\caption{SNR values from various denoising recoveries at the minimizing indices $k_{min}$ and the stopping criteria $k^*(\delta)$. Bold entries are the maximum of their respective columns.}
		\begin{tabular}{c|cccccccc}
			&\multicolumn{2}{c}{Cameraman}& \multicolumn{2}{c}{Barbara} & \multicolumn{2}{c}{Mandril} & \multicolumn{2}{c}{Geometry} \\
			SNR at & $k_{min}$& $k^*$ & $k_{min}$ & $k^*$ & $k_{min}$ & $k^*$ & $k_{min}$ & $k^*$ \\\hline
			SO MHDM (EL)& 21.89 & 21.89 & 19.70 & 19.08 & 20.38 & 20.06 & 28.21 & 24.82\\
			SO Tight (EL) & 21.94 & 21.94 & 19.71 & 18.98 & 20.37 & 19.89 & 29.46 & 27.50 \\
			SO Refined (EL) & 22.25 & 22.05 & \textbf{19.88} & 19.33 & \textbf{20.73} & 20.32 & 30.11 & 29.66 \\
			SO MHDM (ADMM) & 22.16 & \textbf{22.16} & 19.31 & 19.12 & 19.93 & 19.93 & \textbf{34.67} & 31.53 \\
			SO Tight (ADMM)& \textbf{22.31} & 21.97 & 19.74 & \textbf{19.36} & 20.46 & 20.19 & 34.60 & \textbf{34.60}\\
			AA MHDM & 21.76 & 19.45 & 19.53 & 19.14 & 20.21 & 20.05 & 28.21 & 24.77\\
			AA-log MHDM & 21.74 & 21.74 & 19.65 & 19.04 & 20.35 & 20.02 & 28.21 & 24.83 \\
			AA-log Tight & 21.48 & 21.48 & 19.67 & 18.93 & 20.37 & 19.79 & 30.22 & 26.11 \\
			AA-log Refined & 21.68 & 21.19 & 19.64 & 19.21 & 20.55 & \textbf{20.41} & 27.12 & 13.81 \\
			TNV-log & 21.03 & 20.13 & 19.53 & 17.97 & 20.37 & 19.02 & 25.70 & 23.17 \\
			TNV-log Tight & 21.12 & 12.61 & 19.54 & 17.35 & 20.42 & 17.89 & 26.39 & 21.02\\\hline
			TNV   & 21.12 & & 19.73  & & 20.40 & & 25.94 &\\
			DZ  & 21.87 & & 19.13 & & 20.22 & & 31.32 &\\
		\end{tabular}%
		\label{tab:SNRcomparisons}%
	\end{table}%
	\newlength{\figw}
	\setlength{\figw}{0.13\textwidth}
	\begin{figure}[h!]
		\centering
		\begin{subfigure}{\figw}
			\includegraphics[width=\linewidth]{./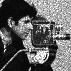}
		\end{subfigure}%
		\begin{subfigure}{\figw}
			\includegraphics[width=\linewidth]{./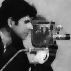}
		\end{subfigure}%
		\begin{subfigure}{\figw}
			\includegraphics[width=\linewidth]{./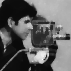}
		\end{subfigure}%
		\begin{subfigure}{\figw}
			\includegraphics[width=\linewidth]{./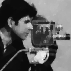}
		\end{subfigure}%
		\begin{subfigure}{\figw}
			\includegraphics[width=\linewidth,cframe=black 1.5pt]{./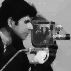}\ %
		\end{subfigure}
		\begin{subfigure}{\figw}
			\includegraphics[width=\linewidth]{./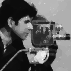}
		\end{subfigure}%
		\begin{subfigure}{\figw}
			\includegraphics[width=\linewidth]{./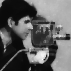}
		\end{subfigure}\\%
		\begin{subfigure}{\figw}
			\includegraphics[width=\linewidth]{./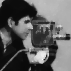}
		\end{subfigure}%
		\begin{subfigure}{\figw}
			\includegraphics[width=\linewidth]{./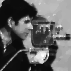}
		\end{subfigure}%
		\begin{subfigure}{\figw}
			\includegraphics[width=\linewidth]{./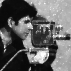}
		\end{subfigure}%
		\begin{subfigure}{\figw}
			\includegraphics[width=\linewidth]{./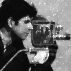}
		\end{subfigure}%
		\begin{subfigure}{\figw}
			\includegraphics[width=\linewidth]{./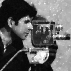}
		\end{subfigure}%
		\begin{subfigure}{\figw}
			\includegraphics[width=\linewidth]{./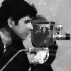}
		\end{subfigure}%
		\begin{subfigure}{\figw}
			\includegraphics[width=\linewidth]{./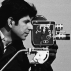}
		\end{subfigure}\\%
		\begin{subfigure}{\figw}
			\includegraphics[width=\linewidth]{./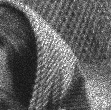}
		\end{subfigure}%
		\begin{subfigure}{\figw}
			\includegraphics[width=\linewidth]{./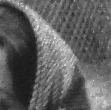}
		\end{subfigure}%
		\begin{subfigure}{\figw}
			\includegraphics[width=\linewidth]{./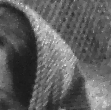}
		\end{subfigure}%
		\begin{subfigure}{\figw}
			\includegraphics[width=\linewidth, cframe=black 1.5pt]{./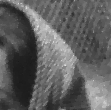}
		\end{subfigure}\ %
		\begin{subfigure}{\figw}
			\includegraphics[width=\linewidth]{./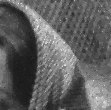}
		\end{subfigure}%
		\begin{subfigure}{\figw}
			\includegraphics[width=\linewidth]{./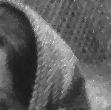}
		\end{subfigure}%
		\begin{subfigure}{\figw}
			\includegraphics[width=\linewidth]{./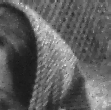}
		\end{subfigure}\\%
		\begin{subfigure}{\figw}
			\includegraphics[width=\linewidth]{./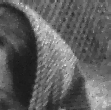}
		\end{subfigure}%
		\begin{subfigure}{\figw}
			\includegraphics[width=\linewidth]{./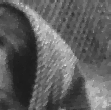}
		\end{subfigure}%
		\begin{subfigure}{\figw}
			\includegraphics[width=\linewidth]{./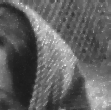}
		\end{subfigure}%
		\begin{subfigure}{\figw}
			\includegraphics[width=\linewidth]{./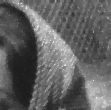}
		\end{subfigure}%
		\begin{subfigure}{\figw}
			\includegraphics[width=\linewidth]{./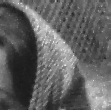}
		\end{subfigure}%
		\begin{subfigure}{\figw}
			\includegraphics[width=\linewidth]{./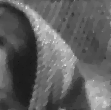}
		\end{subfigure}%
		\begin{subfigure}{\figw}
			\includegraphics[width=\linewidth]{./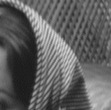}
		\end{subfigure}\\%
		\begin{subfigure}{\figw}
			\includegraphics[width=\linewidth]{./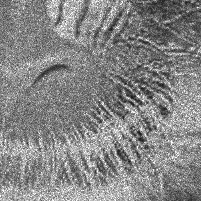}
		\end{subfigure}%
		\begin{subfigure}{\figw}
			\includegraphics[width=\linewidth]{./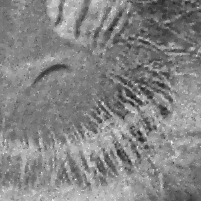}
		\end{subfigure}%
		\begin{subfigure}{\figw}
			\includegraphics[width=\linewidth]{./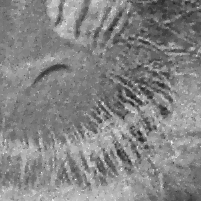}
		\end{subfigure}%
		\begin{subfigure}{\figw}
			\includegraphics[width=\linewidth, cframe = black 1.5pt]{./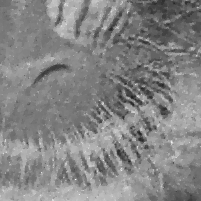}
		\end{subfigure}\ %
		\begin{subfigure}{\figw}
			\includegraphics[width=\linewidth]{./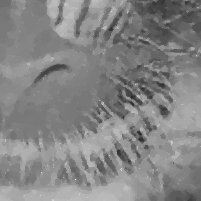}
		\end{subfigure}%
		\begin{subfigure}{\figw}
			\includegraphics[width=\linewidth]{./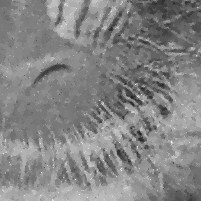}
		\end{subfigure}%
		\begin{subfigure}{\figw}
			\includegraphics[width=\linewidth]{./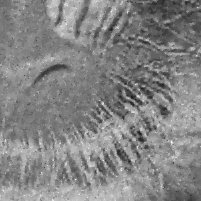}
		\end{subfigure}\\%
		\begin{subfigure}{\figw}
			\includegraphics[width=\linewidth]{./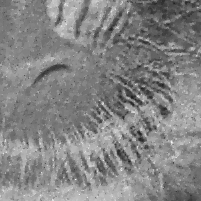}
		\end{subfigure}%
		\begin{subfigure}{\figw}
			\includegraphics[width=\linewidth]{./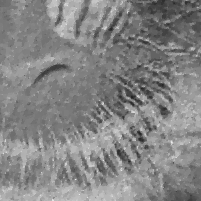}
		\end{subfigure}%
		\begin{subfigure}{\figw}
			\includegraphics[width=\linewidth]{./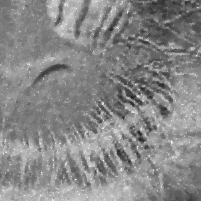}
		\end{subfigure}%
		\begin{subfigure}{\figw}
			\includegraphics[width=\linewidth]{./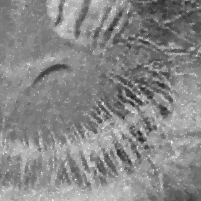}
		\end{subfigure}%
		\begin{subfigure}{\figw}
			\includegraphics[width=\linewidth]{./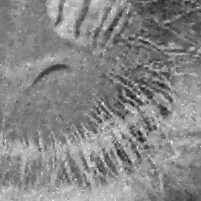}
		\end{subfigure}%
		\begin{subfigure}{\figw}
			\includegraphics[width=\linewidth]{./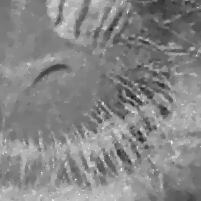}
		\end{subfigure}
		\begin{subfigure}{\figw}
			\includegraphics[width=\linewidth]{./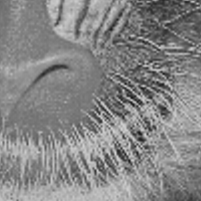}
		\end{subfigure}\\%
		\begin{subfigure}{\figw}
			\includegraphics[width=\linewidth]{./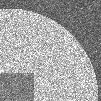}
		\end{subfigure}%
		\begin{subfigure}{\figw}
			\includegraphics[width=\linewidth]{./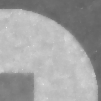}
		\end{subfigure}%
		\begin{subfigure}{\figw}
			\includegraphics[width=\linewidth]{./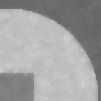}
		\end{subfigure}%
		\begin{subfigure}{\figw}
			\includegraphics[width=\linewidth]{./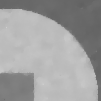}
		\end{subfigure}%
		\begin{subfigure}{\figw}
			\includegraphics[width=\linewidth]{./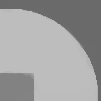}
		\end{subfigure}%
		\begin{subfigure}{\figw}
			\includegraphics[width=\linewidth,cframe=black 1.5pt]{./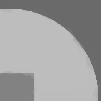}
		\end{subfigure}\ %
		\begin{subfigure}{\figw}
			\includegraphics[width=\linewidth]{./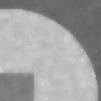}
		\end{subfigure}\\%
		\begin{subfigure}{\figw}
			\includegraphics[width=\linewidth]{./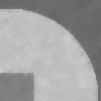}
		\end{subfigure}%
		\begin{subfigure}{\figw}
			\includegraphics[width=\linewidth]{./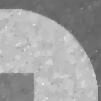}
		\end{subfigure}%
		\begin{subfigure}{\figw}
			\includegraphics[width=\linewidth]{./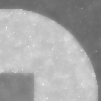}
		\end{subfigure}%
		\begin{subfigure}{\figw}
			\includegraphics[width=\linewidth]{./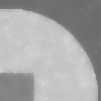}
		\end{subfigure}%
		\begin{subfigure}{\figw}
			\includegraphics[width=\linewidth]{./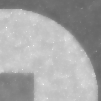}
		\end{subfigure}%
		\begin{subfigure}{\figw}
			\includegraphics[width=\linewidth]{./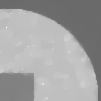}
		\end{subfigure}%
		\begin{subfigure}{\figw}
			\includegraphics[width=\linewidth]{./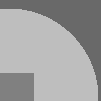}
		\end{subfigure}%
		\caption{Detailed MHDM denoising. From left to right, top to bottom: noisy images; SO MHDM (EL) regular, tight, and refined versions; SO MHDM (ADMM) regular and tight versions; AA-log MHDM regular, tight, and refined versions; TNV-log regular and tight versions; TNV, DZ and original images. Black borders indicate the best recoveries (highest SNR) for each image. } \label{fig:comparisons}
	\end{figure}%
	\subsection{Deblurring-Denoising}
	Table  \ref{tab:SNR_blur} lists comparisons for denoising-deblurring. The AA-log MHDM models perform better overall. The DZ model greatly over-smooths details compared to the multiscale schemes, especially in the detailed crops of Fig.\,\ref{fig:comparisons_blur}, and naturally, it performs well on ``Geometry". 
	We emphasize the improvement of the TNV and the TNV-log methods over the DZ model on textured images in either SNR or the visual metric, demonstrating the effectiveness of multiscale recoveries. 
	\begin{table}[h]
		\centering
		\caption{SNR values for restoring noisy-blurred images at the $k_{min}$. Bold entries are the maximums of their respective columns. }
		\begin{tabular}{c|cccc}
			SNR at $k_{min}$ & Cameraman & Barbara  & Mandril & Geometry \\\hline
			AA MHDM & 18.81 & 17.69 &18.65 &26.45\\
			AA-log MHDM & 18.81 & 17.69  & 18.64 & 26.66 \\
			AA-log Tight & \textbf{19.07} & \textbf{17.74}  & \textbf{18.66} & \textbf{28.76} \\
			AA-log Refined & 18.81 & 17.54  & 18.47 & 27.12 \\
			TNV-log & 18.22  & 17.30 & 18.13 & 24.17 \\
			TNV-log Tight & 18.39 & 17.35 & 18.16 & 25.17 \\\hline
			TNV   & 18.26 & 17.32  & 18.17 & 24.23 \\
			DZ   & 17.14& 17.36 & 17.24 & 28.32 
		\end{tabular}%
		\label{tab:SNR_blur}%
	\end{table}%
	%
	\setlength{\figw}{0.15\textwidth} 
	\begin{figure}[h!]
		\centering
		\begin{subfigure}{\figw}
			\includegraphics[width=\linewidth]{./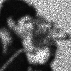}
		\end{subfigure}%
		\begin{subfigure}{\figw}
			\includegraphics[width=\linewidth]{./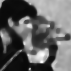}
		\end{subfigure}%
		\begin{subfigure}{\figw}
			\includegraphics[width=\linewidth, cframe = black 1.5pt]{./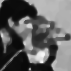}
		\end{subfigure}\ %
		\begin{subfigure}{\figw}
			\includegraphics[width=\linewidth]{./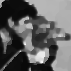}
		\end{subfigure}%
		\begin{subfigure}{\figw}
			\includegraphics[width=\linewidth]{./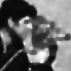}
		\end{subfigure}\\%
		\hspace*{\figw}%
		\begin{subfigure}{\figw}
			\includegraphics[width=\linewidth]{./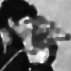}
		\end{subfigure}%
		\begin{subfigure}{\figw}
			\includegraphics[width=\linewidth]{./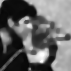}
		\end{subfigure}%
		\begin{subfigure}{\figw}
			\includegraphics[width=\linewidth]{./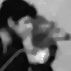}
		\end{subfigure}%
		\begin{subfigure}{\figw}
			\includegraphics[width=\linewidth]{./additive/cameraman_noise_tight/cameraman_zoom.png}
		\end{subfigure}\\%
		\begin{subfigure}{\figw}
			\includegraphics[width=\linewidth]{./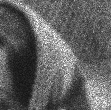}
		\end{subfigure}%
		\begin{subfigure}{\figw}
			\includegraphics[width=\linewidth]{./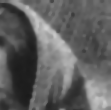}
		\end{subfigure}%
		\begin{subfigure}{\figw}
			\includegraphics[width=\linewidth, cframe = black 1.5pt]{./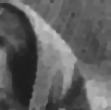}
		\end{subfigure}\ %
		\begin{subfigure}{\figw}
			\includegraphics[width=\linewidth]{./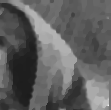}
		\end{subfigure}%
		\begin{subfigure}{\figw}
			\includegraphics[width=\linewidth]{./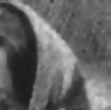}
		\end{subfigure}\\%
		\hspace*{\figw}%
		\begin{subfigure}{\figw}
			\includegraphics[width=\linewidth]{./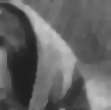}
		\end{subfigure}%
		\begin{subfigure}{\figw}
			\includegraphics[width=\linewidth]{./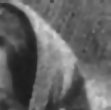}
		\end{subfigure}%
		\begin{subfigure}{\figw}
			\includegraphics[width=\linewidth]{./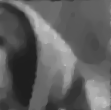}
		\end{subfigure}%
		\begin{subfigure}{\figw}
			\includegraphics[width=\linewidth]{./additive/barbara_noise_tight/barbara_zoom.png}
		\end{subfigure}\\%
		\begin{subfigure}{\figw}
			\includegraphics[width=\linewidth]{./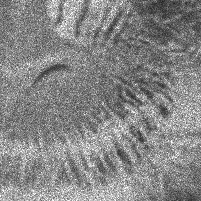}
		\end{subfigure}%
		\begin{subfigure}{\figw}
			\includegraphics[width=\linewidth]{./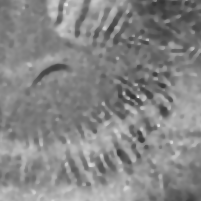}
		\end{subfigure}%
		\begin{subfigure}{\figw}
			\includegraphics[width=\linewidth, cframe = black 1.5pt]{./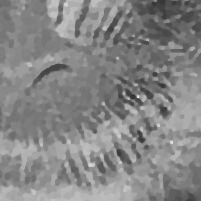}
		\end{subfigure}\ %
		\begin{subfigure}{\figw}
			\includegraphics[width=\linewidth]{./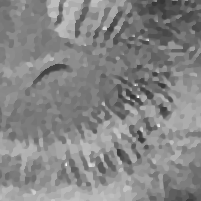}
		\end{subfigure}%
		\begin{subfigure}{\figw}
			\includegraphics[width=\linewidth]{./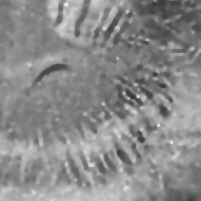}
		\end{subfigure}\\%
		\hspace*{\figw}%
		\begin{subfigure}{\figw}
			\includegraphics[width=\linewidth]{./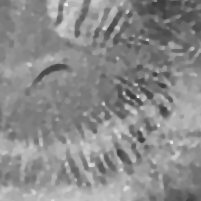}
		\end{subfigure}%
		\begin{subfigure}{\figw}
			\includegraphics[width=\linewidth]{./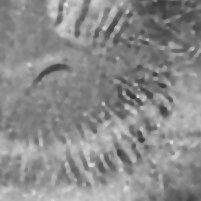}
		\end{subfigure}%
		\begin{subfigure}{\figw}
			\includegraphics[width=\linewidth]{./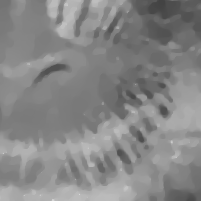}
		\end{subfigure}%
		\begin{subfigure}{\figw}
			\includegraphics[width=\linewidth]{./additive/mandril_noise_tight/mandril_zoom.png}
		\end{subfigure}\\%
		\begin{subfigure}{\figw}
			\includegraphics[width=\linewidth]{./additive/disc_square_noise_tight/disc_square_noisy_zoom.png}
		\end{subfigure}%
		\begin{subfigure}{\figw}
			\includegraphics[width=\linewidth]{./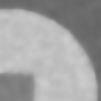}
		\end{subfigure}%
		\begin{subfigure}{\figw}
			\includegraphics[width=\linewidth, cframe = black 1.5pt]{./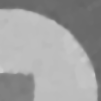}
		\end{subfigure}\ %
		\begin{subfigure}{\figw}
			\includegraphics[width=\linewidth]{./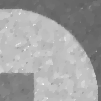}
		\end{subfigure}%
		\begin{subfigure}{\figw}
			\includegraphics[width=\linewidth]{./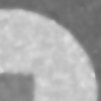}
		\end{subfigure}\\%
		\hspace*{\figw}%
		\begin{subfigure}{\figw}
			\includegraphics[width=\linewidth]{./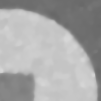}
		\end{subfigure}%
		\begin{subfigure}{\figw}
			\includegraphics[width=\linewidth]{./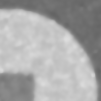}
		\end{subfigure}%
		\begin{subfigure}{\figw}
			\includegraphics[width=\linewidth]{./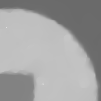}
		\end{subfigure}%
		\begin{subfigure}{\figw}
			\includegraphics[width=\linewidth]{./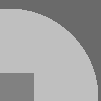}
		\end{subfigure}%
		\caption{Detailed MHDM denoising-deblurring. From left to right, top to bottom: noisy images; AA-log MHDM regular, tight, and refined versions; TNV-log regular and tight versions; TNV, DZ, and original images. Black borders indicate the best recoveries (highest SNR) for each image.}\label{fig:comparisons_blur}
	\end{figure}%
	\section{Conclusion}
	We introduce several multiscale hierarchical decomposition methods for images degraded by multiplicative noise which are able to retain texture and image features at different scales while reducing noise. We demonstrate that the fidelity terms decrease monotonically with increasing hierarchical depth, and propose an effective stopping criterion which limits restoring excess noise. Additionally, we consider extensions of the multiplicative MHDM (the so-called tight and refined versions), which are shown theoretically to push the regularity of recoveries to match that of the original image and empirically demonstrate better convergence properties of the iterates. The AA MHDM and AA-log MHDM methods are aimed specifically at gamma noise, and additionally handle deblurring tasks quite well, outperforming the existing DZ and TNV models in our tests. The convex SO MHDM models are quite robust with respect to initialization. They behave exceptionally well for images with smooth regions and still prevent over-smoothing on regions with oscillating patterns when implemented with ADMM, while the Euler-Lagrange method retains slightly more details in very textured images.
	Finally, we consider the TNV-log method which handles blurring and noise without assuming a specific distribution for the noise (e.g.\,gamma). Accordingly, it is suitable for images corrupted by more general multiplicative noises. While TNV-log does not top the other MHDM methods when tested on gamma noise corrupted samples, it still outperforms the DZ model when deblurring and denoising more textured images, while continuing to maintain fine-scaled features. Collectively, these MHDM schemes provide a means to address multiplicative noise-degraded images by constructing decompositions across several scales. It is hoped that the schemes and the included analysis might be helpful for the reader to identify applications  beyond the denoising and deblurring tasks investigated here, and to extend to additional schemes outside those studied within. In the future, we aim to extend the proposed models to image segmentation and to vector-valued data, for instance, to restore colored images perturbed by multiplicative noise and blurring.

	\section{Acknowledgments}
	J.B.~, W.L.~and L.V.~received support from NSF grant DMS 2012868 while working on this project. E.R. is supported by the
	Austrian Science Fund (FWF): DOC 78. W.L.~completed part of the research while a member of the Department of Mathematics at UCLA
	 and is supported by the Faculty Research Grants at Fordham University. The authors thank Tobias Wolf (University of Klagenfurt) for the manuscript proofreading.
	\clearpage
	\bibliographystyle{siamplain}
	\bibliography{refs}
	
\end{document}